\DeclareFontFamily{OT1}{pzc}{}
\DeclareFontShape{OT1}{pzc}{m}{it}{<-> s * [1.100] pzcmi7t}{}
\DeclareMathAlphabet{\mathpzc}{OT1}{pzc}{m}{it}
\newcommand{\G}{\mathpzc G}
\newcommand{\Sol}{\mathpzc{Sol}}
\newcommand{\ID}{\mathpzc{ID}}
\newcommand{\GP}{\mathpzc{GP}}
\newcommand{\bigslant}[2]{{\raisebox{.2em}{$#1$}\left/\raisebox{-.2em}{$#2$}\right.}}
\newcommand{\id}{\mathrm{id}}
\newcommand{\Ric}{\mathrm{Ric}}
\newcommand{\Scal}{\mathrm{Scal}}
\renewcommand{\S}{\Sigma}
\newcommand{\na}{\nabla}
\newcommand{\n}{\nabla}
\newcommand{\im}{\mathrm{im}}
\renewcommand{\L}{\mathcal{L}}
\newcommand{\Hess}{\mathrm{Hess}}
\newcommand{\g}{{\tilde g}}
\newcommand{\h}{{\tilde h}}
\newcommand{\tk}{{\tilde k}}
\newcommand{\m}{{\tilde m}}
\renewcommand{\o}{\omega}
\newcommand{\supp}{\mathrm{supp}}
\newcommand{\tr}{\mathrm{tr}}
\renewcommand{\a}{\alpha}
\renewcommand{\b}{\beta}
\renewcommand{\d}{\partial}
\newcommand{\abs}[1]{\left\lvert#1\right\rvert}				% Absolute value
\newcommand{\norm}[1]{\left\lVert#1\right\rVert} 			% Norm
\renewcommand{\div}{\mathrm{div}}
\newcommand{\grad}{\mathrm{grad}}
\newcommand{\D}[0]{\mathcal{D}}
\newcommand{\R}[0]{\mathbb{R}}							% Real numbers
\newcommand{\N}[0]{\mathbb{N}}							% Natural numbers
\newcommand{\Z}[0]{\mathbb{Z}}							% Integers
\newcommand{\der}[2]{\frac{d}{d #1}\Big|_{#1 = #2}}
\theoremstyle{plain}
\newtheorem{thm}{Theorem}[section]
\newtheorem{prop}[thm]{Proposition}
\newtheorem{lemma}[thm]{Lemma}
\newtheorem{cor}[thm]{Corollary}
\theoremstyle{definition}
\newtheorem{definition}[thm]{Definition}
\newtheorem{remark}[thm]{Remark}
\newtheorem{example}[thm]{Example}
\author{Oliver Lindblad Petersen}
\title{On the Cauchy problem for the linearised Einstein equation}
\address{University of Hamburg, Department of Mathematics, Bundesstraße 55, 20146 Hamburg, Germany}
\email{oliver.petersen@uni-hamburg.de}
\begin{document}
%	\hbadness=100000
%	\vbadness=100000
	\begin{abstract}
	A classical problem in general relativity is the Cauchy problem for the linearised Einstein equation (the initial value problem for gravitational waves) on a globally hyperbolic vacuum spacetime. 
	A well-known result is that it is uniquely solvable up to gauge solutions, given initial data on a spacelike Cauchy hypersurface. 
	The solution map is an isomorphism between initial data (modulo gauge producing initial data) and solutions (modulo gauge solutions).

	 In the first part of this work, we show that the solution map is actually an isomorphism of locally convex topological vector spaces. 
	 This implies that the equivalence class of solutions depends continuously on the equivalence class of initial data. 
	 We may therefore conclude well-posedness of the Cauchy problem.
	 
	 In the second part, we show that the linearised constraint equations can always be solved on a closed manifold with vanishing scalar curvature. 
	 This generalises the classical notion of TT-tensors on flat space used to produce models of gravitational waves. 
	
	All our results are proven for smooth and distributional initial data of arbitrary real Sobolev regularity.
	\end{abstract}
	\keywords{linearised Einstein equation \and Cauchy problem \and gravitational wave \and linearised constraint equation}
	\subjclass[2010]{Primary 83C35; Secondary 35L15}
	\maketitle
	\tableofcontents
\begin{sloppypar}
	\section{Introduction}

Gravitational waves are usually modelled as solutions to the linearised Einstein equation. 
The purpose of this work is to extend well-known results on the Cauchy problem for the linearised Einstein equation. 

The classical existence theorem for the Cauchy problem for the (non-linear) Einstein equation, proven by Choquet-Bruhat in \cite{F-B1952}, can be formulated as follows. Given a Riemannian manifold $(\S, \g)$ with a smooth $(0,2)$-tensor $\tk$ satisfying the vacuum constraint equations
\[
	\Phi(\g, \tk) := \left( 
	\begin{array}{ll}
		\Scal(\g) - \g(\tk, \tk) + (\tr_\g\tk)^2 \\
		\div(\tk - \tr_\g (\tk)\g) 
	\end{array}	 \right) = 0,
\]
there is a globally hyperbolic spacetime $(M, g)$ satisfying the Einstein vacuum equation
\[
	\Ric(g) = 0,
\]
and an embedding $\iota: \S \hookrightarrow M$ such that $(\g, \tk)$ are the induced first and second fundamental forms. 
It was shown in \cite{C-BG1969} that each such globally hyperbolic development can be embedded into a \enquote{maximal globally hyperbolic development}, determined up to isometry. 
Assume now that $(M, g)$ is a smooth vacuum spacetime and let $\S \subset M$ denote a Cauchy hypersurface.
Using methods analogous to \cite{F-B1952}, it can be shown (see \cite{FewsterHunt2013}*{Thm. 3.1, Thm. 3.3} and \cite{FisherMarsden1979}*{Thm. 4.5}) that the Cauchy problem for the linearised Einstein equation can be solved. 
More precisely, given smooth $(0,2)$-tensors $(\h, \m)$ on $\S$ such that the linearised constraint equation is satisfied, i.e. 
\[
	D\Phi_{\g, \tk}(\h, \m) = 0,
\]
there is a smooth $(0,2)$-tensor $h$ on $M$ such that the linearised Einstein equation
\[
	D\Ric_g(h) = 0
\]
is satisfied. 
Analogously to the (non-linear) Einstein equation, the solution is only determined up to addition of a gauge solution. 
However, the \emph{equivalence class} of gauge solutions is uniquely determined by the corresponding \emph{equivalence class of initial data}.
In other words, the solution map
\begin{gather*}
	\bigslant{\text{Initial data on }\S}{\text{Gauge producing initial data}} \\
	\downarrow \\
	\bigslant{\text{Global solutions on }M}{\text{Gauge solutions}}
\end{gather*}
is an isomorphism. Our first main result is Theorem \ref{thm: Wellposedness}, which says that this map is an isomorphism of \emph{locally convex topological vector spaces}.
This concludes \emph{well-posedness} of the Cauchy problem for the linearised Einstein equation, meaning global existence, uniqueness and continuous dependence on initial data.
We prove this for initial data of arbitrary real Sobolev regularity.
This enables us to model gravitational waves that are very singular at a certain initial time.
See Example \ref{ex: arbitrarily irregular} for an example of arbitrarily irregular initial data that does not produce gauge solutions.

In order to apply Theorem \ref{thm: Wellposedness} in practice, it is necessary to understand the space 
\begin{equation} \label{eq: quotient}
	\bigslant{\text{Initial data on }\S}{\text{Gauge producing initial data}}.
\end{equation}
We show that this space can be well understood if we assume that $\S$ is compact and $\tk = 0$, in which case the constraint equation $\Phi(\g, \tk) = 0$ just means $\Scal(\g) = 0$. 
Using Moncrief's splitting theorem, it is easy to calculate that solutions $(\h, \m)$ of 
\begin{align}
	\Delta  \tr_\g \h &= \g(\Ric(\g), \h), \label{eq: Eq1firstFF} \\
		\div \h &= 0, \label{eq: Eq2firstFF} \\
		\Delta \tr_\g \m &= - \g(\Ric(\g), \tilde m), \label{eq: Eq1secondFF} \\
		\div (\m - (\tr_\g \m)\g) &= 0, \label{eq: Eq2secondFF}
\end{align}
are in one-to-one correspondence with elements in \eqref{eq: quotient} in case $\tk = 0$. In other words, one can show that
\begin{align*}
	\text{Solutions to (\ref{eq: Eq1firstFF} - \ref{eq: Eq2secondFF})} &\to \bigslant{\text{Initial data on }\S}{\text{Gauge producing i.d.}}\\
	(\h, \m) &\mapsto [(\h, \m)] 
\end{align*}
is an isomorphism of topological vector spaces, see Proposition \ref{prop: Moncrief_splitting}. 
Our second main result, Theorem \ref{thm: initial_data_split}, concerns solving equations \eqref{eq: Eq1firstFF} - \eqref{eq: Eq2secondFF}. 
We show that given $(0,2)$-tensors $(\a, \b)$, there is a unique decomposition 
\begin{align}
	\a &= \h + L \o + C \Ric(\g) + \phi \g, \label{eq: alpha}\\
	\b &= \m + L \eta + C' \Ric(\g) + \psi \g, \label{eq: beta}
\end{align}
where $(\h, \m)$ solves (\ref{eq: Eq1firstFF} - \ref{eq: Eq2secondFF}), $L$ is the conformal Killing operator, $\o, \eta$ are one-forms, $\phi, \psi$ are functions such that $\int_\S \phi d\mu_\g = \int_\S \psi d\mu_\g = 0$. 
If $\Ric(\g) = 0$, then the solution space of (\ref{eq: Eq1firstFF} - \ref{eq: Eq2secondFF}) is spanned by the TT-tensors and $C \g$ for any $C \in \R$ and \eqref{eq: alpha} and \eqref{eq: beta} are nothing but the usual $L^2$-split.
Note however that TT-tensors are only guaranteed to solve (\ref{eq: Eq1firstFF} - \ref{eq: Eq2secondFF}) in case $\Ric(\g) = 0$. 
Our result therefore extends the classical use of TT-tensors to produce models of gravitational waves.

We start by introducing spaces of sections of various regularity in Section \ref{sec: Notation}. In Section \ref{ch: non-linear_Cauchy} we formulate the Cauchy problem for the linearised Einstein equation. The goal of Section \ref{ch: Cauchy_lin_Ein} is then to prove our first main result, Theorem \ref{thm: Wellposedness}, concerning the linearised Einstein equation. We conclude in Section \ref{ch: lin_constraint} with our second main result, Theorem \ref{thm: initial_data_split}, concerning the linearised constraint equations. 

We expect that our results can be generalised to various models with matter, using the methods presented here, but we will for simplicity restrict to the vacuum case.

\subsubsection*{Acknowledgements}
It is a pleasure to thank my PhD supervisor Christian Bär for suggesting this topic and for many helpful comments. 
I especially want to thank Andreas Hermann for many discussions and for reading early versions of the manuscript. 
Furthermore, I would like to thank the Berlin Mathematical School, Sonderforschungsbereich 647 and Schwerpunktprogramm 2026, funded by Deutsche Forschungsgemeinschaft, for financial support.

\section{The function spaces} \label{sec: Notation}

Let us start by introducing our notation. 
All manifolds, vector bundles and metrics will be smooth, but the sections will have various regularity.
Assume that $M$ is a smooth manifold and let $E \to M$ be a real vector bundle over $M$. We denote the \emph{space of smooth sections} in $E$ by
\[
	C^\infty(M, E),
\]
equipped with the canonical Fr{\'e}chet space structure. Let us denote the space of \emph{sections of Sobolev regularity} $k \in \R$ by
\[
	H^k_{loc}(M, E).
\]
By the Sobolev embedding theorem, we may write
\[
	H^\infty_{loc}(M, E) := \bigcap_{k \in \R} H_{loc}^k(M, E) = C^\infty(M, E).
\]
We will write $C^\infty(M)$ and $H^k_{loc}(M)$ instead of $C^\infty(M, E)$ and $H^k_{loc}(M, E)$ whenever it is clear from the context what vector bundle is meant. 
For a compact subset $K \subset M$ and a $k \in \R \cup \{\infty\}$, let
\[
	H^k_K(M, E)
\]
denote the sections of Sobolev regularity $k$ with support contained in $K$.
As above, we have $H_K^\infty(M, E) = C^\infty_K(M, E)$.
Define the space of \emph{sections of compact support} of Sobolev regularity $k$ in $E$ by
\[
	H_c^k(M, E) := \bigcup_{\stackrel{K \subset M}{\text{compact}}} H^k_K(M, E).
\]
In order to define the topology, choose an exhaustion of $M$ by compact sets $K_1 \subset K_2 \subset \hdots \subset \bigcup_{n \in \N} K_n = M$.
Since $H^k_{K_n}(M) \subset H^k_{K_{n+1}}(M)$ is closed for all $n \in \N$, the strict inductive limit topology is defined on $H_c^k(M)$ (see for example \cite{Treves1967}).
A linear map $L: H_c^k(M) \to V$ into a locally convex topological vector space $V$ is continuous if and only if $L|_{H_K^k(M)}:H_K^k(M) \to V$ is continuous for any compact set $K \subset M$.
The strict inductive limit topology turns $H_c^k(M)$ into a locally convex topological vector space (in fact an LF-space) and is independent of the choice of exhaustion. The following lemma gives the notion of convergence of a sequence (or net) of sections.

\begin{lemma} \label{le: convergence_H_c}
Let $k \in \R \cup \{\infty\}$. 
Assume that $V \subset H_c^k(M)$ is bounded. 
Then there is a compact subset $K \subset M$ such that $V \subset H_K^k(M)$. 
In particular, if $u_n \to u$ is a converging sequence (or net), then there is a compact subset $K \subset M$ such that $\supp(u_n), \supp(u) \subset K$ and $u_n \to u$ in $H^k_K(M, E)$.
\end{lemma}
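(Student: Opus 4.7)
The plan is to prove the first assertion by contradiction and to deduce the second as a direct consequence of the defining property of strict inductive limits: a seminorm on $H_c^k(M,E)$ is continuous if and only if its restriction to every $H_{K_n}^k(M,E)$ is continuous, and conversely the original Fr\'echet/Hilbert topology on each $H_K^k(M,E)$ agrees with the subspace topology inherited from $H_c^k(M,E)$.

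Suppose $V$ is bounded but is not contained in any $H_{K_n}^k(M,E)$. Then for every $n$ I can pick $u_n \in V$ with $\supp(u_n) \not\subset K_n$, and hence a point $x_n \in \supp(u_n) \setminus K_n$. Because the $K_n$ are increasing and exhaust $M$, any compact set $K$ lies in some $K_m$; for $n \geq m$ we then have $x_n \notin K_n \supset K_m \supset K$, so $\{x_n\}$ has no accumulation point in $M$ and forms a closed discrete subset. Using local compactness and discreteness I choose pairwise disjoint relatively compact open neighborhoods $U_n \ni x_n$ such that the family $\{U_n\}$ is locally finite, together with cutoffs $\phi_n \in C_c^\infty(M)$ with $\supp(\phi_n) \subset U_n$ and $\phi_n \equiv 1$ on an open neighborhood of $x_n$.

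The central step is to manufacture a continuous seminorm on $H_c^k(M,E)$ that is unbounded on $V$. Using a fixed local trivialisation of $E|_{U_n}$, define
\[
    p(u) := \sup_{n \in \N} c_n \, \norm{\phi_n u}_{H^k(U_n)}
\]
with weights $c_n > 0$ to be chosen. Local finiteness of $\{U_n\}$ implies that for any compact $K \subset M$ only finitely many $\phi_n$ are nontrivial on $K$, so $p|_{H_K^k(M)}$ is a finite maximum of continuous seminorms on $H_K^k(M,E)$ and hence continuous; by the universal property of the strict inductive limit this makes $p$ continuous on $H_c^k(M,E)$. Since $x_n \in \supp(u_n)$ in the distributional sense and $\phi_n \equiv 1$ near $x_n$, the product $\phi_n u_n$ cannot vanish (otherwise $u_n$ would vanish on the open set where $\phi_n = 1$, contradicting $x_n \in \supp(u_n)$), so $\norm{\phi_n u_n}_{H^k(U_n)} > 0$. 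Setting $c_n := n / \norm{\phi_n u_n}_{H^k(U_n)}$ then yields $p(u_n) \geq n$, contradicting the boundedness of $V$.

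The second statement follows: any convergent sequence or net $u_n \to u$ is bounded, hence by the first part contained in some $H_K^k(M,E)$, and convergence in $H_c^k(M,E)$ then coincides with convergence in $H_K^k(M,E)$ by the subspace-topology property of strict inductive limits. The main obstacle I anticipate is the careful choice of the locally finite family $\{U_n\}$ together with the verification that $\phi_n u_n \neq 0$ for distributional sections of negative Sobolev regularity; both points are handled by unwinding the definition of the support of a distributional section and using that $\{x_n\}$ is a closed discrete set in $M$.
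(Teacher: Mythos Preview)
Your proof is correct and follows the same overall architecture as the paper's---a contradiction argument producing a continuous gauge on $H_c^k(M,E)$ that is unbounded on $V$---but the implementation is genuinely different. The paper bypasses the point-picking and cutoff construction entirely: from $\supp(f_i)\not\subset K_i$ it takes test sections $\varphi_i\in C_c^\infty(M,E^*)$ with $\supp(\varphi_i)\subset K_i^{\mathsf c}$ and $f_i[\varphi_i]\neq 0$, and then works directly with the convex open set
\[
W=\Bigl\{f\in H_c^k(M,E)\ \Big|\ |f[\varphi_i]|<\tfrac{|f_i[\varphi_i]|}{i}\ \text{for all }i\Bigr\}.
\]
The point is that $\supp(\varphi_i)\subset K_i^{\mathsf c}$ makes the family automatically locally finite against the exhaustion (on $H_{K_j}^k$ only the conditions for $i<j$ are nontrivial), so openness of $W$ is immediate, and $f_i\notin T\cdot W$ for $i>T$ gives the contradiction. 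Your route---extracting a closed discrete set $\{x_n\}$, building a locally finite disjoint system of cutoffs, and assembling a weighted $\sup$-seminorm---yields the same conclusion but costs you the verification that such a locally finite family exists, that multiplication by smooth cutoffs is continuous on (possibly negative-order) $H^k$, and that $\phi_n u_n\neq 0$ in the distributional sense. All of these are true and you handle them correctly; the paper's dual-pairing shortcut simply avoids them. Conversely, your seminorm formulation has the minor advantage of being reusable: once $p$ is in hand, unboundedness of $V$ is a one-line inequality $p(u_n)\ge n$, whereas the paper has to argue via absorption into $T\cdot W$.
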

\begin{proof}
Assume to reach a contradiction, that the statement is not true. Let $K_1 \subset K_2 \subset \hdots$ be a exhaustion by compact subsets of $M$. 
By assumption, for each $i \in \N$ there is an $f_i \in V$ such that $\supp(f_i) \not\subset K_i$. 
Hence there are test sections $\varphi_i \in C_c^\infty(M, E^*)$ such that $\supp(\varphi_i) \subset K_i^\mathsf{c}$ and $f_i[\varphi_i] \neq 0$. 
Consider the convex subset containing zero, given by 
\[
	W := \left\{f \in H_c^k(M, E) \mid \abs{f[\varphi_i]} < \frac{\abs{f_i[\varphi_i]}}{i}, \ \forall i\right\} \subset H_c^k(M, E).
\]
We claim that $W$ is open. 
We have 
\[
	W \cap H^k_{K_j}(M, E) = \bigcap_{i = 1}^{j-1} \left\{f \in H_{K_j}^k(M, E) \mid \abs{f[\varphi_i]} < \frac{\abs{f_i[\varphi_i]}}{i} \right\}.
\]
Since $f \mapsto \abs{f[\varphi_i]}$ is a continuous function on $H_{K_j}^k(\S, E)$, this is a finite intersection of open sets and hence open. 
Hence $W$ is open. 
Note that for each $T > 0$, we have $f_i \notin T\cdot W$ if $i > T$. 
It follows that $V$ is not bounded.
\end{proof}
 
Let $E^* \to M$ be the dual vector bundle to $E$. 
We denote the space of all continuous functionals on $C_c^\infty(M, E^*)$ by $\D'(M, E)$ and we equip it with the weak*-topology. 
Elements of $\D'(M, E)$ are called \emph{distributional sections in $E$}.
For $k < 0$, the elements of $H^k_{loc}(M)$ cannot be realised as measurable functions, only as distributions. 
The natural inclusion $L^1_{loc}(M, E) \hookrightarrow \D'(M, E)$ is given by
\[
	f \mapsto \left(\varphi \mapsto \int_{M} \varphi(f) d \mu_g \right),
\]
for some fixed (semi-)Riemannian metric $g$ on $M$.
The image of the embedding
\[
	C^\infty(M, E) \hookrightarrow \mathcal D'(M, E)
\]
is dense. 
We have the continuous inclusions
\[
	H_K^k(M) \subset H_c^k(M) \subset H_{loc}^k(M) \subset \D'(M, E)
\]
for each compact set $K \subset M$ and $k \in \R \cup \{\infty\}$.
Moreover, it is a standard result that each compactly supported distribution is of some Sobolev regularity, i.e.
\[
	\D'_c(M, E) = \bigcup_{k \in \R} H_c^k(M, E).
\]

Let us explain how linear differential operators act on distributional sections. 
Since any Sobolev section is a distribution, this shows how differential operators act on Sobolev spaces as well. 
Assume that $E, F \to M$ are equipped with positive definite metrics $\langle \cdot, \cdot \rangle_E$ and $\langle\cdot, \cdot \rangle_F$.
Denote the space of linear differential operators of order $m \in \N$ mapping sections in $E$ to sections in $F$ by $\mathrm{Diff}_m(E, F)$.
Given a $P \in \mathrm{Diff}_m(E, F)$, define the formal adjoint operator $P^* \in \mathrm{Diff}_m(F, E)$ to be the unique differential operator such that 
\begin{equation} \label{eq: int_local_int}
	\int_{M} \langle P\varphi, \psi \rangle_F = \int_{M} \langle \varphi, P^* \psi \rangle_E d\mu_g,
\end{equation}
for all $\psi \in C_c^\infty(M, F)$ and $\varphi \in C^\infty_c(M, E)$. Using this, $P$ can be extended to act on distributions by the formula
\[
	PT[\langle \cdot, \psi\rangle_F] = T[\langle \cdot, P^* \psi\rangle_E].
\]
This coincides with equation \eqref{eq: int_local_int} when $T$ can be identified with a compactly supported smooth section. 
$P$ extends to continuous maps
\begin{align*}
	\D'(M, E) &\to \mathcal D'(M, F), \\
	H^k_{loc}(M, E) &\to H^{k-m}_{loc}(M, F), \\
	H^k_K(M, E) &\to H^{k-m}_K(M, F), \\
	H^k_c(M, E) &\to H^{k-m}_c(M, F),
\end{align*}
for all $k \in \R \cup \{\infty\}$ and all compact subsets $K \subset M$.
The following lemma will be of importance.
\begin{lemma} \label{le: two topologies}
Let $k \in \R \cup \{\infty\}$ and let $P \in \mathrm{Diff}_m(E, F)$. Then the induced subspace topology on 
\[
	H^k_c(M, E) \cap \ker(P)
\]
is the same as the strict inductive limit topology induced by the embeddings 
\[
	H^k_K(M, E) \cap \ker(P) \hookrightarrow H^k_c(M, E) \cap \ker(P).
\]
\end{lemma}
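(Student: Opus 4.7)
The plan is to reduce the claim to the Dieudonn{\'e}--Schwartz theorem on closed subspaces of strict (LF)-spaces. Write $X := H^k_c(M,E) \cap \ker(P)$ and $X_n := H^k_{K_n}(M,E) \cap \ker(P)$, so that $X = \bigcup_n X_n$. First I would record that $P$ acts continuously as $H^k_{K_n}(M,E) \to H^{k-m}_{K_n}(M,F)$, so $X_n$ is a closed subspace of the Fr{\'e}chet space $H^k_{K_n}(M,E)$ and is itself a Fr{\'e}chet space; the inclusions $X_n \hookrightarrow X_{n+1}$ remain closed embeddings, as restrictions of the closed embeddings $H^k_{K_n}(M,E) \hookrightarrow H^k_{K_{n+1}}(M,E)$.

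Denote the subspace topology on $X$ by $\tau_\mathrm{sub}$ and the strict inductive limit topology by $\tau_\mathrm{ind}$. The inclusion $\tau_\mathrm{sub} \subseteq \tau_\mathrm{ind}$ is automatic from the universal property: each map $X_n \hookrightarrow H^k_c(M,E)$ is continuous and factors through $(X,\tau_\mathrm{sub})$, which forces the identity $(X,\tau_\mathrm{ind}) \to (X,\tau_\mathrm{sub})$ to be continuous.

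For the reverse inclusion $\tau_\mathrm{ind} \subseteq \tau_\mathrm{sub}$, given an absolutely convex $\tau_\mathrm{ind}$-neighbourhood $W$ of $0$ in $X$, the task is to build an absolutely convex $0$-neighbourhood $V \subset H^k_c(M,E)$ with $V \cap X \subset W$. The construction is inductive: one produces absolutely convex $0$-neighbourhoods $V_n \subset H^k_{K_n}(M,E)$ with $V_{n+1} \cap H^k_{K_n}(M,E) = V_n$ and $V_n \cap X \subset W$, and then sets $V := \bigcup_n V_n$, which is automatically a $0$-neighbourhood in $H^k_c(M,E)$ in the strict inductive limit topology. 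The extension step $V_n \rightsquigarrow V_{n+1}$ uses Hahn--Banach separation in $H^k_{K_{n+1}}(M,E)$ simultaneously respecting the two subspaces $H^k_{K_n}(M,E)$ and $X_{n+1}$. I expect this simultaneous extension to be the main technical obstacle; since the argument depends only on the abstract structure (a closed subspace of a strict (LF)-space), it may be cleanest simply to invoke Dieudonn{\'e}--Schwartz once the closedness of $X_n$ in $H^k_{K_n}(M,E)$ has been recorded.
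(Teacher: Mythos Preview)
Your proposal is correct but takes a substantially more abstract route than the paper. The paper's argument is essentially three lines, recycling Lemma~\ref{le: convergence_H_c}: given a net $u_\alpha \to u$ in $X$ with the subspace topology, one has $u_\alpha \to u$ in $H^k_c(M,E)$; Lemma~\ref{le: convergence_H_c} then supplies a compact $K$ with $u_\alpha \to u$ in $H^k_K(M,E)$, hence in $H^k_K(M,E)\cap\ker(P)$, and therefore in $(X,\tau_{\mathrm{ind}})$ via the continuous inclusion. The easy direction is the same as yours. Your approach instead treats the lemma as a general structural fact about closed subspaces of strict (LF)-spaces; this buys generality but costs effort, since the simultaneous Hahn--Banach extension you flagged is genuine work, and the name ``Dieudonn\'e--Schwartz'' is more commonly attached to the boundedness and completeness theorems than to this particular subspace statement, so a precise citation would require some care. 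The paper's route is cheaper here precisely because Lemma~\ref{le: convergence_H_c} has already been established from scratch and does all the heavy lifting.
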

\begin{proof}
Let $u_n \to u$ be a net converging in $H^k_c(M, E) \cap \ker(P)$ with respect to the subspace topology. Then $u_n \to u$ in $H^k_c(M, E)$, which by Lemma \ref{le: convergence_H_c} means that there is a compact subset $K \subset M$ such that $u_n \to u$ in $H^k_K(M, E)$. It follows that $u_n \to u$ in $H_K^k(M, E) \cap \ker(P)$ and hence in $H_c^k(M, E) \cap \ker(P)$ with respect to the strict inductive limit topology, since the embedding is continuous. The other direction is clear.
\end{proof}

Assume now that $(M,g)$ is a smooth globally hyperbolic spacetime. By \cite{BernalSanches08}*{Thm. 1.1} there is a Cauchy temporal function $t:M \to \R$, i.e. for all $\tau \in t(M)$, $\S_{\tau} := t^{-1}(\tau)$ is a smooth spacelike Cauchy hypersurface and $\grad(t)$ is timelike and past directed. The metric can then be written as
\[
	g = -\a^2 dt^2 + \g_t,
\]
where $\a: M \to \R$ is a positive function and $\g_\tau$ denotes a Riemannian metric on $\S_\tau$, depending smoothly on $\tau \in t(M)$. 
It follows that the future pointing unit normal $\nu$ is given by $\nu = -\frac1 \a \grad(t)|_{\S_\tau}$. 
Let us use the notation
\[
	\na_{t} := \na_{\grad(t)}.
\]
For each $k \in \R$, we get a Fr{\'e}chet vector bundle 
\[
	(H_{loc}^k(\S_\tau, E|_{\S_\tau}))_{\tau \in t(M)}.
\]
We denote the $C^m$-sections in this vector bundle by
\[
	C^m\left(t(M), H_{loc}^k(\S_\cdot, E|_{\S_\cdot}) \right).
\]
This is a Fr{\'e}chet space. When solving wave equations, the solutions typically lie in the following spaces of sections of \emph{finite energy of infinite order}:
\[
	CH^k_{loc}(M, E, t) := \bigcap_{j = 0}^\infty C^j\left(t(M), H_{loc}^{k-j}(\S_\cdot, E|_{\S_\cdot})\right).
\]
The spaces $CH_{loc}^k(M, E, t)$ carry a natural induced Fr{\'e}chet topology. For $k = \infty$, write
\[
	CH^\infty_{loc}(M, E, t) := \bigcap_{k \in \R} CH^k_{loc}(M, E, t) = C^\infty(M, E).
\]
Note that we have the continuous embedding
\begin{equation}
	CH^k_{loc}(M, E, t) \hookrightarrow H^{\lfloor k \rfloor}_{loc}(M, E), \label{eq: CH_H-embedding}
\end{equation}
where $\lfloor k \rfloor$ is the largest integer smaller than or equal to $k$. The finite energy sections can be considered as distributions defined by
\[
	u[\varphi] := \int_{t(M)}u(\tau) \left[(\a \varphi)|_{\S_\tau} \right] d\tau.
\]
For any subset $A \subset M$, let $J^{-/+}(A)$ denote the causal past/future of $A$ and denote their union $J(A)$. 
%Similarly, denote by $I^{-/+}(A)$ the chronological past/future of $A$ and their union by $I(A)$. 
A subset $A \subset M$ is called \emph{spatially compact} if $A \subset J(K)$ for some compact subset $K \subset M$. 
For each spatially compact subset $A \subset M$, the space
\[
	CH_{A}^k(M, E, t) := \{f \in CH_{loc}^k(M, E, t) \mid \supp(f) \subset A \} \subset CH_{loc}^k(M, E, t)
\]
is closed and therefore also a Fr{\'e}chet space.  We define the \emph{finite energy sections of spatially compact support} by
\[
	CH_{sc}^k(M, E, t) := \bigcup_{\stackrel{A}{\text{spatially compact}}} CH_{A}^k(M, E, t)
\] 
with the strict inductive limit topology. 
The strict inductive limit topology is defined, since if $K_i$ is an exhaustion of a Cauchy hypersurface $\S$, then $J(K_i)$ is an exhaustion of $M$ by spatially compact sets. 
Similar to before, the notion of convergence is given by the following lemma. 
The proof is analogous to the proof of Lemma \ref{le: convergence_H_c}. 
\begin{lemma} \label{le: convergence_CH_sc}
Assume that $V \subset CH_{sc}^k(M, E, t)$ is bounded. 
Then there is a compact subset $K \subset \S$ such that $V \subset CH_{J(K)}^k(M, E, t)$. 
In particular, if $u_n \to u$ is a converging sequence (or net), then there is a compact subset $K \subset \S$ such that $\supp(u_n), \supp(u) \subset J(K)$ and $u_n \to u$ in $CH^k_{J(K)}(M, E, t)$.
\end{lemma}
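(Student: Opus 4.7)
The plan is to repeat the contradiction argument of Lemma \ref{le: convergence_H_c} almost verbatim, replacing the exhaustion of $M$ by compact sets with an exhaustion by spatially compact sets of the form $J(K_i)$, where $K_1 \subset K_2 \subset \dots$ is an exhaustion of a fixed Cauchy hypersurface $\S$ by compact subsets. First I would check that $\bigcup_i J(K_i) = M$, which follows from the fact that in a globally hyperbolic spacetime $J^\pm(p) \cap \S$ is compact for every $p \in M$ and therefore lies in some $K_i$.

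Next I would assume for contradiction that no $J(K_i)$ contains the supports of all elements of $V$, pick for each $i$ an $f_i \in V$ with $\supp(f_i) \not\subset J(K_i)$, and then choose a test section $\varphi_i \in C_c^\infty(M, E^*)$ with $\supp(\varphi_i) \subset M \setminus J(K_i)$ and $f_i[\varphi_i] \neq 0$, where $f_i$ is interpreted as a distribution via the integral formula stated earlier. The candidate open neighbourhood of zero will be
\[
W := \left\{ f \in CH^k_{sc}(M, E, t) \;\middle|\; |f[\varphi_i]| < |f_i[\varphi_i]|/i \text{ for all } i \right\}.
\]
The key observation is that for $i \geq j$ we have $\supp(\varphi_i) \subset M \setminus J(K_i) \subset M \setminus J(K_j)$, so for $f \in CH^k_{J(K_j)}(M, E, t)$ only the finitely many conditions with $i < j$ are non-trivial. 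Hence
\[
W \cap CH^k_{J(K_j)}(M, E, t) = \bigcap_{i=1}^{j-1}\left\{ f \in CH^k_{J(K_j)}(M, E, t) \;\middle|\; |f[\varphi_i]| < |f_i[\varphi_i]|/i \right\}
\]
is a finite intersection of open sets, provided the functionals $f \mapsto f[\varphi_i]$ are continuous on $CH^k_{J(K_j)}(M, E, t)$. Once $W$ is established to be open, the choice $i \geq T$ forces $f_i \notin T \cdot W$ and contradicts the boundedness of $V$, exactly as in Lemma \ref{le: convergence_H_c}.

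The main technical obstacle will be verifying the continuity of $f \mapsto f[\varphi_i]$ on $CH^k_{J(K_j)}(M, E, t)$: one must unpack the integral formula $f[\varphi_i] = \int_{t(M)} f(\tau)[(\a \varphi_i)|_{\S_\tau}] d\tau$, exploit the compactness of $\supp(\varphi_i)$ to reduce the $\tau$-integral to a compact interval, and use that $CH^k_{J(K_j)}(M, E, t)$ embeds continuously into $C^0(t(M), H^k_{loc}(\S_\cdot, E|_{\S_\cdot}))$ to control the distributional pairing uniformly in $\tau$; I expect this to be routine. The \emph{in particular} statement then follows because any convergent net is bounded, yielding the compact $K \subset \S$, and the strict inductive limit topology restricted to the closed Fréchet subspace $CH^k_{J(K)}(M, E, t)$ agrees with its own Fréchet topology (this is the $CH$-analogue of Lemma \ref{le: two topologies} for the embedding $CH^k_{J(K)} \hookrightarrow CH^k_{sc}$).
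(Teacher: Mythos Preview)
Your proposal is correct and follows exactly the approach the paper intends: the paper's own proof of this lemma is simply the one-line remark that it is ``analogous to the proof of Lemma~\ref{le: convergence_H_c}'', and you have spelled out precisely that analogy, correctly replacing the compact exhaustion $K_i$ of $M$ by the spatially compact exhaustion $J(K_i)$. Your attention to the continuity of $f \mapsto f[\varphi_i]$ on $CH^k_{J(K_j)}(M,E,t)$ is the one genuinely new ingredient compared to the $H^k_c$ case, and your outline for handling it (compact $\tau$-support of $\varphi_i$ plus the continuous embedding into $C^0(t(M),H^k_{loc})$) is the right one; the paper leaves this implicit.
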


Any $P \in \mathrm{Diff}_m(E, F)$ extends to continuous maps
\begin{align*}
	CH^k_{loc}(M, E, t) &\rightarrow CH^{k-m}_{loc}(M, E, t), \\
	CH^k_{A}(M, E, t) &\rightarrow CH^{k-m}_{A}(M, E, t), \\
	CH^k_{sc}(M, E, t) &\rightarrow CH^{k-m}_{sc}(M, E, t), 
\end{align*}
for any $k \in \R \cup \{\infty\}$ and any spatially compact set $A \subset M$. 
The following lemma is proven analogously to Lemma \ref{le: two topologies}, using Lemma \ref{le: convergence_CH_sc} instead of Lemma \ref{le: convergence_H_c}.
\begin{lemma} \label{le: two topologies2}
Let $k \in \R \cup \{\infty\}$ and let $P \in \mathrm{Diff}_m(E, F)$. Then the induced subspace topology on 
\[
	CH^k_{sc}(M, E) \cap \ker(P)
\]
is the same as the strict inductive limit topology induced by the embeddings 
\[
	CH^k_{J(K)}(M, E) \cap \ker(P) \hookrightarrow CH^k_{sc}(M, E) \cap \ker(P).
\]
\end{lemma}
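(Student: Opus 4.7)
The plan is to mirror the argument of Lemma \ref{le: two topologies} verbatim, substituting the ``spatially compact'' machinery for the ``compactly supported'' one. Since the topologies involved are locally convex, it suffices to show that they share the same convergent nets (because a locally convex topology is determined by its closure operation, which in turn is determined by convergent nets).

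First I would settle the easy direction: the inductive limit topology is finer than the subspace topology. For each compact $K\subset\S$ the inclusion $CH^k_{J(K)}(M,E,t)\hookrightarrow CH^k_{sc}(M,E,t)$ is continuous by construction, and $P$ acts continuously on each $CH^k_{J(K)}$, so the restricted inclusion $CH^k_{J(K)}(M,E)\cap\ker(P)\hookrightarrow CH^k_{sc}(M,E,t)\cap\ker(P)$ (with the subspace topology on the target) is continuous. By the universal property of the strict inductive limit, the identity from the inductive limit topology to the subspace topology is continuous.

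For the reverse direction I would argue at the level of nets. Suppose $u_n\to u$ in $CH^k_{sc}(M,E)\cap\ker(P)$ with respect to the subspace topology. Then $u_n\to u$ in the ambient space $CH^k_{sc}(M,E,t)$, so by Lemma \ref{le: convergence_CH_sc} there is a compact $K\subset\S$ with $\supp(u_n),\supp(u)\subset J(K)$ and $u_n\to u$ in $CH^k_{J(K)}(M,E,t)$. Since $\ker(P)\cap CH^k_{J(K)}(M,E,t)$ is a closed subspace (here I use that $P$ extends continuously to $CH^k_{J(K)}$, as stated in the excerpt just before Lemma \ref{le: two topologies2}), convergence holds in $CH^k_{J(K)}(M,E)\cap\ker(P)$, and hence in $CH^k_{sc}(M,E)\cap\ker(P)$ with the strict inductive limit topology because the corresponding inclusion is continuous.

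I do not anticipate a serious obstacle; the only mild subtlety is the argument that agreement of convergent nets implies agreement of the two locally convex topologies, but this follows from the standard fact that closed sets in a topological space are characterized as those stable under passage to net limits. Everything else is a direct transcription of the earlier proof, using Lemma \ref{le: convergence_CH_sc} in place of Lemma \ref{le: convergence_H_c} and spatially compact sets of the form $J(K)$ in place of compact subsets of $M$.
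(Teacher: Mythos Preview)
Your proposal is correct and follows exactly the approach the paper intends: it transcribes the proof of Lemma~\ref{le: two topologies} with Lemma~\ref{le: convergence_CH_sc} in place of Lemma~\ref{le: convergence_H_c} and $J(K)$ in place of $K$, which is precisely what the paper instructs. Your extra remarks on the universal property and on net convergence determining the topology just make explicit what the paper leaves as ``the other direction is clear.''
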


Since we will commonly work with distributional tensors, let us conclude this section by showing how some standard tensor operations are made on distributional tensors. Let $g$ be a smooth semi-Riemannian metric on a manifold $M$, extended to tensor fields.
\begin{itemize}
\item If $X \in \mathcal D'(M, TM)$ and $Y \in C^\infty(M, TM)$, then the distribution $g(X, Y)$  is given by 
\[
	g(X, Y)[\varphi] = X[\varphi g(\cdot, Y)].
\]
This is well-defined since $\varphi g(\cdot, Y) \in C_c^\infty(M, T^*M)$. Using this, we can project $X$ to vector subbundles for example.
\item Similarly, if $a \in \mathcal D'(M, T^*M \otimes T^*M)$ and $b \in C^\infty(M, T^*M \otimes T^*M)$, then the distribution $g(a, b)$ is defined by
\[
	g(a, b)[\varphi] := a[\varphi g(\cdot, b)].
\]
In particular, the trace of $a$ with respect to $g$ is defined and equals
\[
	\tr_g(a) := g(g, a).
\]
\end{itemize}

\section{Linearising the Einstein equation} \label{ch: non-linear_Cauchy}

We will study the linearisation of the vacuum Einstein equation
\[
	\Ric(g) = 0
\]
globally hyperbolic spacetimes of dimension at least $3$.
Recall that if $(M, g)$ is a vacuum spacetime (i.e. $\Ric(g) = 0$) and $\S \subset M$ is a spacelike hypersurface, then the induced first and second fundamental forms $(\g, \tk)$ on $\S$ satisfy
\begin{align}
	\Scal(\g) + (\tr_\g \tk)^2 - \g(\tk, \tk) =&0, \label{eq: ham_constraint} \\
	\div (\tk - (\tr_{\g} \tk) \g) =& 0. \label{eq: momentum_constraint}
\end{align}
A famous result by Choquet-Bruhat and Geroch gives a converse statement to this.
\begin{thm}[\cite{C-BG1969}, \cite{F-B1952}] \label{thm: Choquet-Bruhat}
Given a Riemannian manifold $(\S, \g)$ and a smooth $(0,2)$-tensor $\tk$ on $\S$ satisfying \eqref{eq: ham_constraint} and \eqref{eq: momentum_constraint}, there is a maximal globally hyperbolic development $(M, g)$ of $(\S, \g, \tk)$ that is unique up to isometry. 
\end{thm}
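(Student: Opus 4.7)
The plan is to combine Choquet-Bruhat's classical local existence theorem with Geroch's Zorn-lemma argument for the maximal extension. The fundamental difficulty in the local problem is the diffeomorphism invariance of $\Ric(g) = 0$, which renders the system non-hyperbolic as it stands. To resolve this, I would impose the harmonic (wave) gauge, namely choose coordinates $x^\mu$ on $M$ satisfying $\Box_g x^\mu = 0$, equivalently $V^\mu := g^{\a\b}\Gamma^\mu_{\a\b} = 0$. In this gauge $\Ric_{\mu\nu} = -\tfrac{1}{2} g^{\a\b} \d_\a \d_\b g_{\mu\nu} + Q_{\mu\nu}(g, \d g)$, so the equation becomes a quasilinear system of wave equations, the so-called \emph{reduced Einstein equations}.

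For the local Cauchy problem I would prescribe initial data for $g$ and $\d_t g$ on $\S$ reconstructed from $(\g, \tk)$, adjusting the lapse and shift so that $V^\mu$ vanishes on $\S$. Standard quasilinear hyperbolic theory (energy estimates combined with a Picard iteration in suitable Sobolev spaces) then yields a smooth Lorentzian metric $g$ in a tubular neighbourhood of $\S$ solving the reduced system. The crucial next step is \emph{constraint propagation}: applying the twice-contracted Bianchi identity to the Einstein tensor $\Ric(g) - \tfrac{1}{2}\Scal(g) g$ for this $g$, one derives a linear homogeneous wave equation for $V^\mu$, whose initial value and normal derivative vanish on $\S$ precisely because $(\g, \tk)$ satisfies \eqref{eq: ham_constraint}, \eqref{eq: momentum_constraint} together with the arranged harmonic initial condition. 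Uniqueness for the wave equation forces $V \equiv 0$, so $g$ genuinely solves $\Ric(g) = 0$.

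Uniqueness up to isometry among globally hyperbolic developments is then obtained by covering $\S$ by small neighbourhoods in which the reduced equations admit unique harmonic-coordinate solutions, and patching the resulting local isometries by a connectedness argument; the Cauchy property of $\S$ together with the domain-of-dependence property of hyperbolic equations ensures compatibility of the patches. The maximal development is finally constructed by Zorn's lemma applied to the partial order on isometry classes of globally hyperbolic developments of $(\S, \g, \tk)$ given by isometric embeddings fixing $\S$. The main obstacle, which I expect to be the hardest part of the argument, is verifying that every chain admits an upper bound: this upper bound is built as a direct limit of the members of the chain, and one must rule out pathological branching so that the limit remains Hausdorff and globally hyperbolic. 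This is precisely where the local uniqueness statement does the essential work, ensuring that points identified through different members of the chain are identified consistently along any causal curve that can be covered by harmonic-coordinate patches.
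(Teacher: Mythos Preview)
The paper does not give its own proof of this theorem; it is quoted as a classical result with references to \cite{F-B1952} for local existence and \cite{C-BG1969} for the maximal development, and is used only as background for the linearised problem. There is therefore nothing in the paper to compare your argument against.

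That said, your sketch is an accurate outline of the classical strategy in those references: harmonic gauge to render the reduced Einstein system quasilinear hyperbolic, constraint propagation via the contracted Bianchi identity to show the gauge condition persists, geometric uniqueness by passing to harmonic coordinates locally, and the Zorn-lemma argument of Choquet-Bruhat and Geroch for the maximal extension. Your identification of the Hausdorff property of the direct limit as the delicate point is correct; in the original argument this is handled precisely through the local geometric uniqueness you describe. One small correction: in the Zorn argument the upper bound of a chain is not really the issue (a chain of developments has an obvious union once one fixes embeddings); the subtle step is rather showing that the partially ordered set of developments has a common extension for any two elements, so that a maximal element dominates every development. This is where local uniqueness is invoked to glue two developments along their common Cauchy data without creating a non-Hausdorff quotient.
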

A globally hyperbolic development means that there exists an embedding $\iota:\S \hookrightarrow M$ such that $\iota(\S) \subset M$ is a Cauchy hypersurface and $(\g, \tk)$ are the induced first and second fundamental form. 
In particular, $(M, g)$ is a globally hyperbolic spacetime. 
That $(M, g)$ is maximal means that any other globally hyperbolic development embeds isometrically into $(M, g)$ such that the embedding of the Cauchy hypersurface is respected.

Note that a maximal globally hyperbolic development can of course only be unique up to isometry. We will see that this \enquote{gauge invariance} shows up in the linearised case as an important feature of the linearised Einstein equation.

\subsection{The linearised Einstein equation}
\label{sec: Linearisation}

Assume in the rest of the paper, unless otherwise stated, that $(M,g)$ is a globally hyperbolic spacetime of dimension at least $3$ satisfying the \emph{Einstein equation}, i.e.
\[
	\Ric(g) = 0.
\]
We \emph{do not} require $(M,g)$ to be maximal in the sense of Theorem \ref{thm: Choquet-Bruhat}. 
Let us now linearise the Einstein equation around $g$. 
For this, we first define the Lichnerowicz operator
\[
 \Box_L h := \na^*\na h - 2 \mathring{R}h,
\]
where 
\begin{align*}
	\na^*\na &:= - \tr_g(\na^2), \qquad \qquad \text{(connection-Laplace operator)}\\
	\mathring{R}h(X,Y) &:= \tr_g(h(R(\cdot, X)Y, \cdot)),
\end{align*}
for any $(0,2)$-tensor $h$ and $X, Y \in TM$. It will be natural to write the linearised Einstein equation as the Lichnerowicz operator plus a certain Lie derivative of the metric $g$. We use the following notation
\begin{align*}
	\n \cdot h(X) &:= \tr_g(\n_{\cdot}h(\cdot, X)), \qquad\text{(divergence)} \\
	\overline h &:= h - \frac12 \tr_g(h)g. 
\end{align*}
for any $h \in C^\infty(M, S^2M)$ and $X \in TM$.

\begin{lemma}[Linearising the Ricci curvature] \label{le: LinearisedEinstein}
Any curve of smooth Lorentz metrics $g_s$, such that $g_0 = g$ satisfies
\[
	\der s0 \Ric(g_s) = \frac12 \left( \Box_L h + \L_{(\na \cdot \overline h)^\sharp} g\right),
\]
where
\[
	h := \der s0 g_s
\]
and $\L$ is the Lie derivative and $\sharp$ is the musical isomorphism (\enquote{raising an index}). 
\end{lemma}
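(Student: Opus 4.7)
The plan is to carry out the classical variation-of-Ricci computation and then reorganise the result. First I would record the variational formula for the Christoffel symbols,
\[
	D\Gamma^c_{ab}(h) = \tfrac{1}{2} g^{cd}(\nabla_a h_{bd} + \nabla_b h_{ad} - \nabla_d h_{ab}),
\]
obtained by differentiating the Koszul formula; the right-hand side is a genuine tensor, being the difference of two connection coefficients. The Palatini-type identity
\[
	D\Ric_g(h)_{ab} = \nabla_c\bigl(D\Gamma^c_{ab}(h)\bigr) - \nabla_a\bigl(D\Gamma^c_{cb}(h)\bigr)
\]
can then be verified pointwise in normal coordinates at an arbitrary base point. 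The trace collapses by antisymmetry to $D\Gamma^c_{cb}(h) = \tfrac{1}{2}\nabla_b \tr_g h$, so that substituting yields
\[
	2\, D\Ric_g(h)_{ab} = g^{cd}(\nabla_c \nabla_a h_{bd} + \nabla_c \nabla_b h_{ad} - \nabla_c \nabla_d h_{ab}) - \nabla_a \nabla_b \tr_g h.
\]

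Next I would compare this with the stated right-hand side. Unfolding the definitions, the Lichnerowicz piece contributes $-g^{cd}\nabla_c \nabla_d h_{ab} - 2\mathring R h_{ab}$, and the Lie derivative $\L_V g$ with $V = (\nabla \cdot \overline h)^\sharp$ contributes $\nabla_a V_b + \nabla_b V_a$, where $V_b = g^{cd}\nabla_c h_{db} - \tfrac{1}{2}\nabla_b \tr_g h$. After cancelling the $-\nabla_a \nabla_b \tr_g h$ term and the pure connection-Laplace term, the discrepancy between $2 D\Ric_g(h)_{ab}$ and the target reduces to
\[
	g^{cd}[\nabla_c,\nabla_a] h_{bd} + g^{cd}[\nabla_c,\nabla_b] h_{ad} + 2\mathring R h_{ab}.
\]
Applying the standard commutator identity $[\nabla_c,\nabla_a] h_{bd} = -R_{cab}{}^e h_{ed} - R_{cad}{}^e h_{be}$, the first contraction produces $-\mathring R h_{ab}$ (once one invokes the pair symmetry $R_{abcd} = R_{cdab}$ together with the symmetry of $h$) and a Ricci contribution $-R_a{}^e h_{be}$; the symmetric partner gives the analogous $-\mathring R h_{ab} - R_b{}^e h_{ae}$. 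The $\mathring R h$ contributions exactly cancel the explicit $+2\mathring R h_{ab}$, and the residual Ricci contractions vanish by the standing hypothesis $\Ric(g) = 0$.

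The only genuine obstacle here is bookkeeping: one must keep strict control of the index convention for the Riemann tensor in order to confirm that the contraction $g^{cd}R_{cab}{}^e h_{ed}$ really coincides with $\mathring R h_{ab}$. Once that identification is secured, the remainder of the argument is a line-by-line match.
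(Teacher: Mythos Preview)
Your computation is correct and complete. The paper itself does not carry out the calculation at all; its proof consists of a single sentence referring to \cite{Besse1987}*{Thm.~1.174}. So you have supplied what the paper omits.

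One remark worth making explicit for your own clarity: your final step invokes the standing hypothesis $\Ric(g)=0$ to kill the residual terms $-R_a{}^e h_{be}-R_b{}^e h_{ae}$, and this is genuinely needed here. In Besse the Lichnerowicz Laplacian is defined with additional symmetrised Ricci terms, so Besse's variation formula holds for an arbitrary background metric. The paper, by contrast, defines $\Box_L h := \nabla^*\nabla h - 2\mathring R h$ without those Ricci terms, so the identity as stated in the lemma is only valid under the ambient assumption $\Ric(g)=0$. You have handled this correctly; I mention it only because it explains why the Ricci contractions do not simply cancel algebraically but must be disposed of by hypothesis.
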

\begin{proof}
This straightforward computation can for example be found in \cite{Besse1987}*{Thm. 1.174}.
\end{proof}

Let us denote the vector bundle of symmetric $2$-tensors on $M$ by
\[
	S^2M := \otimes_{sym}^2T^*M.
\]
Lemma \ref{le: LinearisedEinstein} motivates the following definition. 

\begin{definition}[The linearised Einstein equation]
	 We define the linearised Ricci curvature
	 \[
	 	D \Ric(h) := \frac12 \left( \Box_L h + \L_{(\na \cdot \overline h)^\sharp}g \right)
	 \]
	 for any $h \in \mathcal D'(M, S^{2}M)$. We say that $h$ satisfies the \emph{linearised Einstein equation} if 
	\[
		D \Ric(h) = 0.
	\]
\end{definition}

Here we use the definition that extends to distributions, 
\[
	\L_Vg(X,Y) = g(\na_XV, Y) + g(\na_YV, X).
\] 

\begin{remark}
	Note that $\square_L$ is a wave operator, but $D\Ric$ is not (c.f. Section \ref{sec: Waves}).
\end{remark}

There are certain solutions of the linearised Einstein equation, called \enquote{gauge solutions}, which are due to \enquote{infinitesimal isometries}.

\begin{lemma}[Gauge invariance of the linearised Einstein equation]
For any vector field $V \in \mathcal D'(M, TM)$, we have 
\[
	D\Ric(\L_V g) = 0.
\]
\end{lemma}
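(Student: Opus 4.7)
The plan is to prove the statement first for smooth vector fields by the usual diffeomorphism-invariance argument, and then extend to distributional $V$ by density and continuity of the differential operator $V \mapsto D\Ric(\L_V g)$.

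For smooth $V$, I would fix a point $p \in M$, work on a small neighborhood, and let $\phi_s$ be the local flow of $V$. Setting $g_s := \phi_s^* g$, naturality of the Ricci tensor under diffeomorphisms gives
\[
	\Ric(g_s) = \phi_s^* \Ric(g) = 0,
\]
since the background satisfies $\Ric(g) = 0$ by assumption. Differentiating at $s = 0$ yields $\der s0 \Ric(g_s) = 0$. On the other hand, $\der s0 g_s = \L_V g$, so Lemma \ref{le: LinearisedEinstein} identifies this derivative with $\frac12(\Box_L (\L_V g) + \L_{(\na \cdot \overline{\L_V g})^\sharp} g) = D\Ric(\L_V g)$. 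Hence $D\Ric(\L_V g) = 0$ at $p$, and since $p$ was arbitrary the identity holds on all of $M$.

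To pass to distributional $V$, I observe that the assignment $V \mapsto D\Ric(\L_V g)$ is a second-order linear differential operator with smooth coefficients from sections of $TM$ to sections of $S^2M$: indeed $\L_V g$ is first order in $V$ with smooth coefficients, and $D\Ric$ is a second-order differential operator whose extension to distributions was constructed in Section \ref{sec: Notation} via the formal adjoint. The composition therefore extends to a continuous map $\mathcal{D}'(M, TM) \to \mathcal{D}'(M, S^2M)$ with respect to the weak-$*$ topology. Since $C^\infty(M, TM)$ is dense in $\mathcal{D}'(M, TM)$ and the operator vanishes on this dense subspace by the smooth case, it vanishes identically.

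I do not expect a serious obstacle here: the smooth case is a one-line consequence of naturality of $\Ric$ combined with Lemma \ref{le: LinearisedEinstein}, and the extension to distributions is automatic once one remarks that $V \mapsto D\Ric(\L_V g)$ is a linear differential operator acting on a distributional vector field. The only point that requires a moment's care is verifying that all the tensor operations entering $\L_V g$ (and then $\overline{\L_V g}$, the divergence, the musical isomorphism, and finally the outer Lie derivative of $g$ along the resulting vector field) are the distributional versions described at the end of Section \ref{sec: Notation}, so that the composition is genuinely a continuous operator on $\mathcal{D}'$; this is immediate because $g$ is smooth.
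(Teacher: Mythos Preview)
Your proof is correct and follows essentially the same line as the paper's: first establish the identity for smooth $V$ by differentiating $\Ric(\phi_s^* g) = \phi_s^*\Ric(g) = 0$, then extend to distributional $V$ by density and continuity of the resulting linear differential operator. (A harmless slip: the composite $V \mapsto D\Ric(\L_V g)$ is third order in $V$, not second, since $\L_V g$ is first order and $D\Ric$ is second order, but this does not affect the argument.)
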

\begin{proof}
Let us first restrict to smooth objects. Let $\varphi_s: M \to M$ be a curve of diffeomorphisms such that $\varphi_0 = \id$ and such that $\frac d {ds} \big |_{s = 0} \varphi_s = V$. Differentiating the equation
\[
	0 = \varphi_s^*\Ric(g) = \Ric(\varphi_s^* g)
\]
gives
\[
	D\Ric(\L_Vg) = 0.
\]
By density of smooth sections in distributional sections, the result extends to the general case.
\end{proof}

\subsection{The linearised constraint equation}

Assume throughout the rest of the paper, unless otherwise stated, that $\S \subset M$ is a smooth spacelike Cauchy hypersurface with future pointing unit normal vector field $\nu$. Let $(\g, \tk)$ denote the first and second fundamental forms. As mentioned in the beginning of this section, $(\g, \tk)$ will satisfy the \emph{constraint equation}
\[
	\Phi(\g, \tk) := \left( 
	\begin{array}{ll}
			\Phi_1(\g, \tk) \\
			\Phi_2(\g, \tk)
	\end{array}	 \right) = 0,
\]
where
\begin{align*}			
			\Phi_1(\g, \tk) &= \Scal(\g) - \g(\tk, \tk) + (\tr_\g\tk)^2, \\
			\Phi_2(\g, \tk) &= \tilde \na \cdot \tk - d(\tr_\g\tk),
\end{align*}
and $\tilde \na$ is the Levi-Civita connection on $\S$ with respect to $\g$. We linearise the constraint equation around $(\g, \tk)$, analogously to Lemma \ref{le: LinearisedEinstein}.

\begin{definition}[The linearised constraint equations] \label{def: lin_constr_eq}
A pair of tensors $(\h, \m) \in \mathcal D'(\S, S^{2}\S) \times \mathcal D'(\S, S^2\S)$ is said to satisfy the \emph{linearised constraint equation}, linearised around $(\g, \tk)$, if
\[ 
	D\Phi(\h, \m) := \left( \begin{array}{ll}
			D\Phi_1(\h, \m) \\
			D\Phi_2(\h, \m)
	\end{array} \right) = 0,
\]
in $\mathcal D'\left(\S, \R) \times \mathcal D'(\S, T^*\S \right)$, where
\begin{align}
 D\Phi_1(\h, \m) 
	:= & \tilde \na \cdot(\tilde \na \cdot \h - d \tr_\g \h ) - \g(\Ric(\g), \h) \nonumber \\
 	& + 2 \g(\tk \circ \tk - (\tr_\g \tk) \tk, \h) - 2 \g(\tk, \m - (\tr_\g \m)\g), \label{eq: LinConstraints1}\\
D\Phi_2(\h, \m)(X) \nonumber
	:=& - \g(\h, \tilde \na_{(\cdot)} \tk(\cdot, X)) - \g \left(\tk(\cdot, X), \tilde \na \cdot (\h - \frac12 (\tr_{\g}\h) \g)\right) \nonumber\\
	& - \frac 12 \g(\tk, \tilde \na_X \h) + d(\g(\tk, \h))(X) + \tilde \na \cdot (\m - (\tr_\g \m)\g)(X), \label{eq: LinConstraints2}
\end{align}
for any $X \in T\S$, where $\tk \circ \tk(X, Y) := \g(\tk(X, \cdot), \tk(Y, \cdot))$ for any $X, Y \in T\S$.
\end{definition}

Similarly to the non-linear case, we will be given initial data satisfying the linearised constraint equations and require a solution to induce these initial data as linearised first and second fundamental forms. Therefore, we need to linearise the following expressions 
\begin{align*}
	\g(X,Y) :=& g(X,Y), \\
	\tk(X,Y) :=& g(\na_X \nu, Y),
\end{align*}
analogously to Lemma \ref{le: LinearisedEinstein}. In order to make sense of the restriction of distributional tensors to $\S$, we assume the following regularity. 

\begin{definition}[Linearised first and second fundamental forms] \label{def: LinearisedFundamentalForms} \
Given $h \in CH_{loc}^k(M, S^2M, t)$, we define $(\h, \m) \in H_{loc}^{k}(\S, S^2\S) \times H_{loc}^{k-1}(\S, S^2\S)$ as
   \begin{align*}
   \h(X,Y) &= h(X,Y), \\
   \m (X,Y) &= -\frac12 h(\nu, \nu)\tk(X,Y) - \frac12 \na_X h(\nu, Y) - \frac12 \na_Y h(\nu, X) + \frac12 \na_\nu h(X,Y),
\end{align*}
for any $X, Y \in T\S$. We call $\h$ and $\m$ the \emph{linearised first and second fundamental forms} induced by $h$.
\end{definition}

Analogously to the non-linear case, one shows that if $\h$ and $\m$ are the linearised first and second fundamental forms induced by $h$, then using that $\Ric(g) = 0$ we get
\begin{align}
	\tr_g(D\Ric(h)) + 2 D\Ric(h)(\nu, \nu) = D\Phi_1(\h, \m), \label{eq: ricciNormalNormal}\\
	D\Ric(h)(\nu, \cdot) = D\Phi_2(\h, \m) \label{eq: ricciNormalDot}.
\end{align}
In particular, if $D\Ric(h) = 0$, the induced initial data $(\h, \m)$ must satisfy $D\Phi(\h, \m) = 0$. 
Let us now formulate the Cauchy problem of the linearised Einstein equation.

\begin{definition}[The Cauchy problem]
Let $(\h, \m) \in H_{loc}^{k}(\S, S^2\S) \times H_{loc}^{k-1}(\S, S^2\S)$ satisfy $D\Phi(\h, \m) = 0$. If $h \in CH_{loc}^k(M, S^2M, t)$ satisfies
\[
	D\Ric(h) = 0
\]
and induces $(\h, \m)$ as linearised first and second fundamental forms, then we call $h$ a \emph{solution to the Cauchy problem of the linearised Einstein equation} with initial data $(\h, \m)$. 
\end{definition}

\section{Well-posedness of the Cauchy problem} \label{ch: Cauchy_lin_Ein}

The goal of this section is to prove our first main result, Theorem \ref{thm: Wellposedness}. 
Recall the setting. 
We assume that $(M, g)$ is a globally hyperbolic spacetime of dimension at least $3$ solving the Einstein equation
\[
	\Ric(g) = 0.
\]
We also assume that $\S \subset M$ is a spacelike Cauchy hypersurface. It follows that
\[
	\Phi(\g, \tk) = 0,
\]
where $(\g, \tk)$ are the induced first and second fundamental forms.

\subsection{Existence of solution} \label{sec: Existence}

We start by proving that given initial data satisfying the linearised constraint equation, there is a solution to the linearised Einstein equation. 
The basic method is well-known and is analogous to the proof of the classical existence result for the non-linear Einstein equation \cite{F-B1952}. 
The crucial point in the proof is to translate the initial data to initial data for a wave equation. 
We show that the existence result extends to initial data of arbitrary real Sobolev degree.
Recall the notation
\[
	\bar h := h-\frac12 \tr_g(h) g.
\]

\begin{lemma} \label{le: GaugeChoice}
	For $k \in \R$, let $(\h, \m) \in H_{loc}^{k}(\S, S^2\S) \times H_{loc}^{k-1}(\S, S^2\S)$ . Assume that ${h \in CH_{loc}^k(M,S^{2}M, t)}$ satisfies
	\begin{align*}
		h(X,Y) &= \h(X,Y),  &		\na_\nu h(X,Y) &= 2 \m(X,Y) - (\h \circ \tk + \tk \circ \h)(X,Y), \\
		h(\nu, X) &= 0, &	\na_\nu h(\nu, X) &= \tilde \na \cdot \left( \h - \frac12 (\tr_\g \h)\g \right)(X), \\
		h(\nu, \nu) &= 0, &	\na_\nu h(\nu, \nu) &= -2 \tr_\g \m,
	\end{align*}
	for all $X,Y \in T\S$, where $\h \circ \tk(X, Y) := g(\h(X, \cdot), \h(Y, \cdot))$ for all $X, Y \in T\S$. Then $\h, \m$ are the first and second linearised fundamental forms induced by $h$ and
	\[
		\na \cdot \overline h|_\S = 0.
	\]
\end{lemma}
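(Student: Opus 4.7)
My plan is a direct verification in an adapted orthonormal frame. I would extend $\nu$ to a local orthonormal frame $(e_0, e_1, \dots, e_n)$ along $\S$ with $e_0 = \nu$ and $(e_1, \dots, e_n)$ a tangent frame on $\S$, and unfold both claims using the Gauss formula $\na_X Y = \tilde\na_X Y + \tk(X,Y)\nu$ for $X, Y \in T\S$, together with the fact that $\na_X \nu \in T\S$ is encoded by $\tk$.

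\textbf{Step 1 (induced fundamental forms).} That $\h$ is the induced linearised first fundamental form is immediate from the first boundary condition. For the linearised second fundamental form as given in Definition \ref{def: LinearisedFundamentalForms}, I would plug in the prescribed data term by term. The term $-\tfrac12 h(\nu,\nu)\tk$ drops because $h(\nu,\nu)|_\S = 0$. In the mixed terms I would expand $\na_X h(\nu, Y)$ by Leibniz and kill pieces using $h(\nu, \cdot)|_\S = 0$, leaving only $-h(\na_X \nu, Y)$; since $\na_X \nu$ is tangential and expressed through $\tk$, the sum of these contributes exactly $\tfrac12(\h \circ \tk + \tk \circ \h)(X,Y)$. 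Combining this with the prescribed $\na_\nu h(X,Y) = 2\m(X,Y) - (\h \circ \tk + \tk \circ \h)(X,Y)$ gives $\m(X,Y)$.

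\textbf{Step 2 (divergence-free gauge).} For $\na \cdot \bar h|_\S = 0$, in the chosen frame
\[
	(\na \cdot \bar h)(X) = -\na_\nu h(\nu, X) + \sum_{i=1}^n \na_{e_i} h(e_i, X) - \tfrac12 X(\tr_g h),
\]
and I would check $X \in T\S$ and $X = \nu$ separately. For $X \in T\S$, the Gauss formula together with $h(\nu, \cdot)|_\S = 0$ collapses the spatial sum to $\tilde\na \cdot \h(X)$, while $\tr_g h|_\S = \tr_\g \h$ turns the trace term into $-\tfrac12 d(\tr_\g \h)(X)$; the prescribed $\na_\nu h(\nu, X) = \tilde\na\cdot(\h - \tfrac12(\tr_\g \h)\g)(X)$ then cancels these exactly. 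For $X = \nu$, $h(\cdot, \nu)|_\S = 0$ and $\na_{e_i} \nu \in T\S$ turn the spatial sum into $-\g(\h, \tk)$; rewriting $\nu(\tr_g h) = \tr_g(\na_\nu h)$ and substituting the prescribed second normal derivatives produces $4\tr_\g \m - 2\g(\h, \tk)$, which combined with $-\na_\nu h(\nu,\nu) = 2\tr_\g \m$ cancels out.

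\textbf{Main obstacle.} Conceptually there is none: the argument is purely algebraic once the Gauss formula is in hand, and the boundary conditions have clearly been engineered precisely to force these cancellations. The only real care needed is sign bookkeeping (Lorentzian versus Riemannian traces and the role of $\nu$ in the frame) and the observation that every manipulation only uses tensorial identities or multiplication by smooth coefficients, so the identities extend from the smooth to the distributional setting automatically.
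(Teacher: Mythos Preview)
Your proposal is correct and is precisely the direct computation the paper has in mind; the paper itself only remarks that ``the proof is a simple computation'' without spelling it out. Your frame-based verification, the use of the Gauss formula, and the separate treatment of tangential versus normal directions in $\nabla\cdot\bar h|_\Sigma$ are exactly the right steps, and your bookkeeping of the Lorentzian sign in the trace and the cancellations for $X=\nu$ check out.
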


The proof is a simple computation. Let us now state the existence theorem. 

\begin{thm}[Existence of solution] \label{thm: Existence} 
Let $k \in \R \cup \{\infty\}$ and assume that $(\h, \m) \in H_{loc}^{k}(\S, S^2\S) \times H_{loc}^{k-1}(\S, S^2\S)$ satisfies
\[
	D\Phi(\h, \m) = 0.
\]
Then there exists a unique
\[
	h \in CH_{loc}^k(M,S^2M, t),
\]
inducing linearised first and second fundamental forms $(\h, \m)$, such that $h|_\S$ and $\na_\nu h|_\S$ are as in Lemma \ref{le: GaugeChoice} and
\begin{align*}
	\square_L h =& 0, \\
	\na \cdot \overline h =& 0.
\end{align*}
In particular
\[
	D\Ric (h) = 0.
\]
Moreover
\begin{equation} \label{eq: FiniteSpeed}
	\supp(h) \subset J \left(\supp(\h) \cup \supp(\m) \right).
\end{equation}
\end{thm}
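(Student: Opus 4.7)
The plan is to reduce the linearised Einstein equation to a wave equation for $h$ by exploiting the harmonic gauge, using the identity of Lemma \ref{le: LinearisedEinstein}: since $D\Ric(h) = \frac12(\square_L h + \L_{(\na\cdot \overline h)^\sharp}g)$, solving $\square_L h = 0$ together with enforcing the gauge $\na \cdot \overline h = 0$ yields $D\Ric(h) = 0$. The Lichnerowicz operator $\square_L$ is a normally hyperbolic wave operator, so standard Cauchy-problem theory on globally hyperbolic spacetimes (which extends to arbitrary real Sobolev regularity and distributional initial data, giving solutions in $CH^k_{loc}(M,S^2M,t)$ with finite propagation speed) is directly applicable.

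First, I would take the initial data $h|_\S$ and $\na_\nu h|_\S$ prescribed in Lemma \ref{le: GaugeChoice}, which lie in $H^k_{loc}(\S, S^2M|_\S) \times H^{k-1}_{loc}(\S, S^2M|_\S)$ because the formulas involve at most one tangential derivative of $\h$ and algebraic combinations of $\h, \m, \tk$. By the cited Lemma, this data automatically induces $(\h,\m)$ as linearised fundamental forms and ensures $\na\cdot\overline h|_\S = 0$. Then I invoke solvability of the Cauchy problem for the normally hyperbolic operator $\square_L$ in Sobolev regularity $k$ to obtain a unique $h \in CH^k_{loc}(M, S^2M, t)$ with $\square_L h = 0$, the prescribed initial data, and $\supp(h) \subset J(\supp(\h)\cup\supp(\m))$.

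The main obstacle, and the crux of the proof, is to show that the harmonic gauge propagates, i.e.\ that $Z := \na\cdot \overline h$ vanishes identically. Setting $V := Z^\sharp$, we have $D\Ric(h) = \frac12 \L_V g$ from $\square_L h = 0$. Applying the linearised contracted second Bianchi identity $\na\cdot\overline{D\Ric(h)} = 0$ (valid on a vacuum background) to $\frac12\L_V g$ produces a wave equation of the form $\square V + \Ric(g)(V,\cdot)^\sharp = 0$, which in our vacuum setting is just $\square V = 0$ for a second-order normally hyperbolic operator on vector fields. It then remains to show $V|_\S = 0$ and $\na_\nu V|_\S = 0$: the first is immediate from Lemma \ref{le: GaugeChoice}, while for the second I use the identities \eqref{eq: ricciNormalNormal} and \eqref{eq: ricciNormalDot} together with $D\Phi(\h,\m)=0$ to deduce $\L_V g(\nu,\cdot)|_\S = 0$ and $(\tr_g \L_V g + 2 \L_V g(\nu,\nu))|_\S = 0$. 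Combining these with $V|_\S = 0$ gives a linear algebraic system in the components of $\na_\nu V|_\S$ that forces $\na_\nu V|_\S = 0$. Uniqueness for the wave equation for $V$ then gives $V \equiv 0$, hence $Z \equiv 0$ and $D\Ric(h) = \frac12 \square_L h = 0$.

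For the regularity bookkeeping, one must check that the derived wave equation and its Cauchy data make sense at Sobolev level $k-1$ (so $V \in CH^{k-1}_{loc}$), which is fine because the right-hand side of the equation for $V$ involves at most one derivative of $h$. Finite speed of propagation \eqref{eq: FiniteSpeed} is inherited directly from the domain-of-dependence property of the wave operator $\square_L$. Uniqueness of $h$ in the theorem follows from uniqueness for $\square_L$ with prescribed Cauchy data. The expected difficulty is bookkeeping in the identity $\na\cdot\overline{D\Ric(h)} = 0$ for distributional $h$; this is handled by density of smooth sections in $CH^k_{loc}$ and continuity of all differential operators involved.
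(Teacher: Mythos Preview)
Your proposal is correct and follows essentially the same route as the paper: solve $\square_L h = 0$ with the gauge-fixed Cauchy data of Lemma~\ref{le: GaugeChoice}, then use the linearised contracted Bianchi identity (the paper's Lemma~\ref{le: DivergenceFree}) together with the identity \eqref{eq: Killing_wave} to obtain a homogeneous wave equation $\na^*\na(\na\cdot\overline h)=0$, and verify vanishing Cauchy data for $\na\cdot\overline h$ via \eqref{eq: ricciNormalNormal}--\eqref{eq: ricciNormalDot} and the constraint $D\Phi(\h,\m)=0$. The only cosmetic difference is that the paper works directly with the one-form $\na\cdot\overline h$ rather than its raised version $V$, and computes $\na_\nu(\na\cdot\overline h)|_\S$ component-by-component rather than phrasing it as a linear system; the substance is identical.
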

From equation \eqref{eq: CH_H-embedding}, we conclude that in fact $h \in H_{loc}^{\lfloor{k}\rfloor}(M)$. 

\begin{remark} \label{rmk: add_gauge}
The property \eqref{eq: FiniteSpeed} is called \emph{finite speed of propagation}. If the initial data are compactly supported, the solution will have spatially compact support. Note however that \eqref{eq: FiniteSpeed} will not hold for all solutions with initial data $(\h, \m)$. If for example $V \in C^\infty(M, TM)$ with support not intersecting $\S$, then $h + \L_Vg$ is going to be a solution with the same initial data. The support of $\L_Vg$ needs not be contained in $ J \left(\supp(\h) \cup \supp(\m) \right)$.
\end{remark}

Using Theorem \ref{thm: Existence}, we get the following stability result. 

\begin{cor}[Stable dependence on intial data] \label{cor: Stability}
	For $k \in \R \cup \{\infty\}$, assume that $(\h_i, \m_i)_{i \in \N} \in H_{loc}^{k}(\S) \times H_{loc}^{k-1}(\S)$ such that $D\Phi(\h_i, \m_i) = 0$ and 
	\[
		(\h_i, \m_i) \to (\h, \m) \in H^{k}_{loc}(\S) \times H_{loc}^{k-1}(\S)
	\]
	in $H_{loc}^{k}(\S) \times H_{loc}^{k-1}(\S)$. Then there exists a solution $h \in CH_{loc}^k(M,t)$ inducing initial data $(\h,\m)$ and a sequence of solutions $h_i \in CH^k(M,t)$, inducing $(\h_i, \m_i)$ as initial data, such that
	\[
		h_i \to h
	\]
	in $CH_{loc}^k(M,t)$ and $\n \cdot \overline h_i = 0$.
\end{cor}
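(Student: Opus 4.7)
The plan is to reduce the corollary to continuous dependence for the linear wave operator $\square_L$ by invoking the construction in Theorem \ref{thm: Existence}. First I would verify that the limit data $(\h, \m)$ still satisfies $D\Phi(\h, \m) = 0$. Since $D\Phi$, as given in \eqref{eq: LinConstraints1}--\eqref{eq: LinConstraints2}, is a linear differential operator of order at most two with smooth coefficients, it extends to continuous maps on the relevant Sobolev spaces. Hence the kernel is closed and the condition $D\Phi(\h, \m) = 0$ is inherited by the limit.

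Next, I would apply Theorem \ref{thm: Existence} to each triple $(\h_i, \m_i)$ and to $(\h, \m)$, producing unique solutions $h_i$ and $h$ in $CH_{loc}^k(M, S^2M, t)$ that solve $\square_L h_i = 0$, $\square_L h = 0$, satisfy the harmonic gauge $\na \cdot \overline{h_i} = 0$ and $\na \cdot \overline{h} = 0$, and have Cauchy data $(h_i|_\S, \na_\nu h_i|_\S)$ and $(h|_\S, \na_\nu h|_\S)$ given by the prescription of Lemma \ref{le: GaugeChoice}. The gauge condition $\na \cdot \overline{h_i} = 0$ demanded by the corollary is thereby automatic. Inspection of Lemma \ref{le: GaugeChoice} shows that the map $(\h, \m) \mapsto (h|_\S, \na_\nu h|_\S)$ is a continuous linear map from $H^k_{loc}(\S, S^2\S) \times H^{k-1}_{loc}(\S, S^2\S)$ into $H^k_{loc}(\S, S^2M|_\S) \times H^{k-1}_{loc}(\S, S^2M|_\S)$, since it only involves pointwise algebraic operations with smooth tensors and the first-order divergence $\tilde \na \cdot$. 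Hence the Cauchy data of $h_i$ converge to those of $h$ in the appropriate Sobolev topologies.

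It remains to transport convergence from the Cauchy data to the solutions of $\square_L$. This is the main point of the argument: the Cauchy problem for the normally hyperbolic operator $\square_L$ on the globally hyperbolic spacetime $(M, g)$ is well-posed in the scale of finite-energy spaces, i.e., the solution map from Cauchy data in $H^k_{loc}(\S) \times H^{k-1}_{loc}(\S)$ to solutions in $CH^k_{loc}(M, S^2M, t)$ is continuous for each $k \in \R \cup \{\infty\}$. This is the standard continuous dependence result for linear wave equations on globally hyperbolic manifolds, extended from smooth data to arbitrary real Sobolev regularity by the same energy estimates used to establish Theorem \ref{thm: Existence}. Feeding the convergent Cauchy data into this continuous solution map yields $h_i \to h$ in $CH^k_{loc}(M, S^2M, t)$, as required.

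The main obstacle is the last step: one needs the continuous dependence of the wave-equation solution map at arbitrary real Sobolev regularity, not just integer or smooth regularity. In the classical Bär--Ginoux--Pfäffle framework this is stated for smooth or finite integer regularity, so one either invokes the version already used in the paper's proof of Theorem \ref{thm: Existence} (which must implicitly contain the needed estimates) or re-derives it by standard duality and interpolation arguments on each compact slab $t^{-1}([a,b])$. Once that input is available, the corollary follows cleanly as outlined above.
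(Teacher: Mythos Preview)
Your proposal is correct and follows essentially the same route as the paper: apply Theorem~\ref{thm: Existence} to obtain the $h_i$ and $h$ in harmonic gauge, use Lemma~\ref{le: GaugeChoice} to see that the Cauchy data for $\square_L$ depend continuously on $(\h,\m)$, and then invoke continuous dependence for the wave operator $\square_L$. The obstacle you flag in the last paragraph is not an issue in this paper's framework: the needed continuous-dependence statement at arbitrary real Sobolev regularity is precisely Corollary~\ref{cor: cont_dep_id}, which is derived from Theorem~\ref{thm: WellposednessLinearWaves} via the open mapping theorem for Fr\'echet spaces, so no additional interpolation or duality argument is required.
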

\begin{proof}
Since 
\[
	(\h_i, \m_i) \to (\h, \m),
\]
the equations in Lemma \ref{le: GaugeChoice} imply that $(h_i|_\S, \na_\nu h_i|_\S) \to (h|_\S, \na_\nu h|_\S)$. Since
\[
	\Box_L h = \Box_L h_i = 0,
\]
we conclude by continuous dependence on initial data for linear wave equations (see Corollary \ref{cor: cont_dep_id}) that $h_i \to h$.
\end{proof}

It is important to note that given converging initial data, the previous corollary gives \emph{one} sequence of converging solutions, inducing the correct initial data. Not every sequence of solutions that induce the correct initial data will converge. One could just add a gauge solution similar to Remark \ref{rmk: add_gauge}. This is the reason why the question of continuous dependence on initial data a priori does not make sense. This will be solved in Section \ref{sec: Main}, by considering \emph{equivalence classes} of solutions. Let us now turn to the proof of the theorem. 

\begin{lemma} \label{le: DivergenceFree}
If $h \in \mathcal D'(M, S^{2}M)$, then
\[
	\na \cdot \left( D\Ric(h) - \frac12 \tr_g(D\Ric(h)) g \right) = 0.
\]
\end{lemma}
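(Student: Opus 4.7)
The statement is the linearised (twice-)contracted second Bianchi identity around a Ricci-flat background, and the most transparent route is to obtain it by differentiating the non-linear contracted Bianchi identity in a family of metrics.

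The plan is as follows. First I would reduce to smooth $h$ by appealing to density of $C^\infty(M,S^2M)$ in $\mathcal{D}'(M,S^2M)$ and continuity of the linear differential operator
\[
h \ \longmapsto\ \nabla\!\cdot\!\bigl(D\Ric(h) - \tfrac12 \tr_g(D\Ric(h))\,g\bigr),
\]
which has smooth coefficients and therefore acts continuously on $\mathcal{D}'(M,S^2M)$. This is the same density argument used in the proof of gauge invariance for $D\Ric$.

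Once $h$ is smooth, I pick a smooth curve of Lorentz metrics $g_s$ with $g_0=g$ and $\frac{d}{ds}\big|_{s=0} g_s = h$ (for instance $g_s := g + s h$ on a neighbourhood of a given point, which is Lorentz for $|s|$ small). The classical (twice-contracted) second Bianchi identity applied to each $g_s$ gives
\[
\nabla^{g_s}\!\cdot\!\bigl(\Ric(g_s) - \tfrac12 \Scal(g_s)\, g_s\bigr) = 0.
\]
Differentiating at $s=0$, the derivative hitting the connection $\nabla^{g_s}$ produces a term proportional to the Einstein tensor of $g$, which vanishes because $\Ric(g)=0$ (so also $\Scal(g)=0$). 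Thus only the derivative of the Einstein tensor survives, giving
\[
\nabla\!\cdot\!\left.\der{s}{0}\bigl(\Ric(g_s) - \tfrac12 \Scal(g_s)\, g_s\bigr)\right. = 0.
\]

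It remains to identify the bracket with $D\Ric(h) - \tfrac12 \tr_g(D\Ric(h))\,g$. Using $\Scal(g_s)=\tr_{g_s}\Ric(g_s)$ and $\frac{d}{ds}\big|_{s=0} g_s^{ij}=-h^{ij}$, one gets
\[
\der{s}{0} \Scal(g_s) = -\langle h,\Ric(g)\rangle + \tr_g(D\Ric(h)) = \tr_g(D\Ric(h)),
\]
since $\Ric(g)=0$, and the $-\tfrac12 \Scal(g)\,h$ term arising from differentiating the factor of $g_s$ in $\Scal(g_s)g_s$ vanishes for the same reason. Substituting yields precisely the identity claimed. There is no real obstacle here beyond bookkeeping; the Ricci-flatness of the background is exactly what makes the connection-variation term and the $\Scal(g)\,h$ term disappear, leaving the clean linearised identity.
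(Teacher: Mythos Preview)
Your proposal is correct and follows essentially the same approach as the paper: linearise the contracted Bianchi identity $\hat\nabla\cdot(\Ric(\hat g)-\tfrac12\tr_{\hat g}\Ric(\hat g)\,\hat g)=0$ around $g$, use $\Ric(g)=0$ to kill the extra terms, and then pass to distributional $h$ by density. You have simply spelled out in more detail what the paper compresses into the phrase ``linearising this equation around $g$, using $\Ric(g)=0$''.
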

\begin{proof}
For any Lorentzian metric $\hat g$, 
\[
	\hat \na \cdot \left( \Ric(\hat g) - \frac12 \tr_{\hat g} (\Ric(\hat g)) \hat g\right) = 0,
\]
where $\hat \na$ is the Levi-Civita connection with respect to $\hat g$. Linearising this equation around $g$, using $\Ric(g) = 0$, gives the equation for smooth $h$. Since the smooth sections are dense in the distributional sections, this proves the lemma.
\end{proof}

A calculation that will be very useful on many places is the following.

\begin{lemma}
Assume that $(N, \hat g)$ is a semi-Riemannian manifold with Levi-Civita connection $\hat \n$. Then
\begin{equation}
	\hat \n \cdot \left( \L_V \hat g - \frac{1}{2}\tr_{\hat g}(\L_V {\hat g})\hat g\right) = - \hat \n^* \hat \n V^\flat + \Ric(\hat g)(V, \cdot). \label{eq: Killing_wave}
\end{equation}
\end{lemma}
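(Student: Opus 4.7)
The plan is to reduce the identity to a commutator of covariant derivatives acting on $V^\flat$, which is then resolved by the Ricci identity. Both sides are linear in $V$, so by density of smooth sections in distributional ones it suffices to treat smooth $V$; I would work in abstract index notation.

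First expand
\[
(\L_V \hat g)_{ab} = \hat\n_a V_b + \hat\n_b V_a,
\qquad
\tr_{\hat g}(\L_V \hat g) = 2 \hat\n^c V_c,
\]
where $V_a$ are the components of $V^\flat$. Using $\hat\n \hat g = 0$, the $b$-component of the divergence of the quantity in parentheses equals
\[
\hat\n^a \hat\n_a V_b + \hat\n^a \hat\n_b V_a - \hat\n_b \hat\n^a V_a.
\]
The first summand is $-(\hat\n^* \hat\n V^\flat)_b$ by the convention $\hat\n^* \hat\n = -\tr_{\hat g}(\hat\n^2)$, while the remaining two form the commutator $[\hat\n^a, \hat\n_b] V_a$.

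By the Ricci identity for $1$-forms, $[\hat\n_c, \hat\n_b] V_a = -R^e{}_{acb} V_e$. Contracting with $\hat g^{ac}$ and applying the symmetries of the Riemann tensor together with the definition $\Ric_{bd} = R^a{}_{bad}$, one finds
\[
[\hat\n^a, \hat\n_b] V_a \;=\; \Ric^a{}_b V_a \;=\; \Ric(\hat g)(V, \partial_b),
\]
which assembled with the previous step gives the claimed formula. The only (rather mild) obstacle is sign-bookkeeping: all signs are forced by the paper's convention $\hat\n^* \hat\n = -\tr_{\hat g}(\hat\n^2)$ and by the Ricci identity on covectors, and once these are fixed the identity is essentially an immediate consequence of the curvature commutator formula.
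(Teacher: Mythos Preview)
Your argument is correct and is essentially the same as the paper's proof: the paper carries out the identical computation using a local orthonormal frame $(e_i)$ with signs $\epsilon_i$ rather than abstract indices, expanding $\L_V\hat g$ in covariant derivatives of $V$, identifying the $\hat\n^*\hat\n$ term, and recognising the remaining commutator as the Ricci term. The only difference is notational.
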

\begin{proof}
Let $(e_1, \hdots, e_n)$ be a local orthonormal frame with respect to $\hat g$ and define $\epsilon_i := g(e_i, e_i) \in \{-1, 1\}$. We have
\begin{align*}
	 \hat \n \cdot &\left( \L_V \hat g - \frac{1}{2}\tr_{\hat g}(\L_V {\hat g})\hat g\right)(X) \\
		&= \sum_{i = 1}^n \epsilon_i \left( \hat \na_{e_i}\L_V \hat g(e_i, X) - \d_X\hat g(\hat \na_{e_i}V, e_i) \right) \\
		&= \sum_{i = 1}^n \epsilon_i \left( \hat g(\hat \n^2_{e_i, e_i} V, X) + \hat g(\hat \n^2_{e_i, X} V, e_i) - \hat g(\hat \na^2_{X, e_i}V, e_i) \right) \\
		&= - \hat \n^* \hat \n V^\flat(X) + \Ric(\hat g)(V, X).
\end{align*}
\end{proof}

\begin{proof}[Proof of Theorem \ref{thm: Existence}]
Consider the Cauchy problem
\begin{equation}	\label{eq: L_d'Alembert}
	\Box_L h = 0 \\
\end{equation}
with $h|_\S$ and $\na_\nu h|_\S$ defined as in Lemma \ref{le: GaugeChoice}, using $(\h, \m)$. 
One checks that $(h|_\S, \na_\nu h|_\S) \in H^k_{loc}(\S, S^2M|_\S) \times H^{k-1}_{loc}(\S, S^2M|_\S)$. 
By Theorem  \ref{thm: WellposednessLinearWaves} there is a unique solution $h \in CH_{loc}^k(M, S^2M, t)$ to this Cauchy problem. 
Moreover, it follows that $\supp(h) \subset J(\supp(\h) \cup \supp(\m))$. 
We claim that $\na \cdot \overline h = 0$. Since $h \in CH_{loc}^k(M, t)$, it follows by Section \ref{sec: Notation} that $\na \cdot \overline h \in CH^{k-1}_{loc}(M, t)$. 
Lemma \ref{le: DivergenceFree} implies that
\begin{align*}
	0 	=& \na \cdot \left(D\Ric(h) - \frac12 \tr_g(D\Ric(h)) g \right) \\
		=& \frac12 \na \cdot \left(\L_{(\na \cdot \overline h)^\sharp}g - \frac12 \tr_g\left(\L_{(\na \cdot \overline h)^\sharp}g \right) g \right) \\
		\stackrel{\eqref{eq: Killing_wave}}{=}& - \frac12 \na^*\na (\na \cdot \overline h),
\end{align*}
since $\Ric(g) = 0$. 
From Lemma \ref{le: GaugeChoice}, we know that $\na \cdot \overline h|_\S = 0$. 
We now use the assumption that $D\Phi(\h, \m) = 0$ to show that $\na_\nu (\na \cdot \overline h)|_\S = 0$. 
Since we know that $\Box_L h = 0$ and $\na \cdot \overline{h}|_\S = 0$, equations \eqref{eq: ricciNormalNormal} and \eqref{eq: ricciNormalDot} imply that
\begin{align*}
	0 	&= D\Phi_1(\h, \m) \\
		&= \tr_g(D\Ric(h)) + 2 D\Ric(h)(\nu, \nu) \\
		&= \frac12 \left( \tr_g(\L_{(\na \cdot \overline h)^\sharp}g) + 2\L_{(\na \cdot \overline h)^\sharp}g(\nu, \nu) \right) \\
		&= \na_\nu (\na \cdot \overline{h})(\nu), \\
	0 	&= D\Phi_2(\h, \m)(X) \\
		&= D\Ric(h)(\nu, X) \\
		&= \frac12 \na_\nu(\na \cdot \overline{h})(X),
\end{align*}
for each $X \in T\S$. 
Altogether we have shown that $\na \cdot \overline h \in CH_{loc}^{k-1}(M,t)$ satisfies
\begin{align*}
	\na^*\na (\na \cdot \overline h) &= 0, \\
		\na \cdot \overline h|_\S &= 0, \\
		\na_\nu (\na \cdot \overline h)|_\S &= 0.
\end{align*}
Theorem \ref{thm: WellposednessLinearWaves} now implies that $\na \cdot \overline h = 0$. 
This finishes the proof.
\end{proof}

\subsection{Uniqueness up to gauge} \label{sec: Uniqueness}

We continue by showing that the solution is unique up to addition of a gauge solution.

\begin{thm}[Uniqueness up to gauge] \label{thm: Uniqueness}
Let $k \in \R \cup \{\infty\}$. Assume that $h \in CH_{loc}^k(M, S^{2}M, t)$ satisfies
\[
	D\Ric(h) = 0
\]
and that the induced first and second linearised fundamental forms vanish. Then there exists a vector field $V \in CH_{loc}^{k+1}(M, TM, t)$ such that
\[
	h = \L_V g.
\]
If $\supp(h) \subset J(K)$ for some compact $K \subset \S$, we can choose $V$ such that $\supp(V) \subset J(K)$. 
\end{thm}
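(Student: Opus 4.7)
The plan is to find $V \in CH^{k+1}_{loc}(M, TM, t)$ such that $\eta := h - \L_V g$ is globally in harmonic gauge, $\na \cdot \overline\eta = 0$, and has vanishing Cauchy data $\eta|_\S = 0$, $\na_\nu \eta|_\S = 0$. Then by gauge invariance $D\Ric(\eta) = D\Ric(h) - D\Ric(\L_V g) = 0$, and combined with the harmonic gauge the defining formula of $D\Ric$ reduces to $\tfrac12 \Box_L \eta = 0$. With vanishing initial data, Theorem \ref{thm: WellposednessLinearWaves} forces $\eta = 0$, proving $h = \L_V g$.

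To construct such a $V$, first set $V|_\S = 0$. Since $V$ vanishes along $\S$, its tangential derivatives at $\S$ vanish as well, and in particular $\L_V g(X, Y)|_\S = 0$ for all $X, Y \in T\S$. Furthermore, when $V$ is smooth the flow of $V$ fixes $\S$ pointwise for all time and is an isometry $(M, \varphi_s^* g) \to (M, g)$; consequently the first and second fundamental forms of $\S$ with respect to $\varphi_s^* g$ coincide with those with respect to $g$, and differentiating in $s$ shows that the induced linearised first and second fundamental forms of the gauge solution $\L_V g$ vanish identically. For distributional $V$ this is inherited from the smooth case by continuity in the defining tensorial formulas. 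Next choose $\na_\nu V|_\S$ algebraically so that $\L_V g(\nu, X)|_\S = h(\nu, X)|_\S$ for $X \in T\S$ and $\L_V g(\nu, \nu)|_\S = h(\nu, \nu)|_\S$; together with the hypothesis $\h = 0$ this gives $\eta|_\S = 0$. Finally, extend $V$ off $\S$ by applying Theorem \ref{thm: WellposednessLinearWaves} to the wave equation
\[
	\na^* \na V^\flat = -\na \cdot \overline h,
\]
which by \eqref{eq: Killing_wave} and $\Ric(g) = 0$ is precisely equivalent to $\na \cdot \overline\eta = 0$ on all of $M$.

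It remains to verify $\na_\nu \eta|_\S = 0$. From $\eta|_\S = 0$ and Definition \ref{def: LinearisedFundamentalForms} applied to $\eta$, the induced linearised second fundamental form of $\eta$ on tangent vectors reduces to $\tfrac12 \na_\nu \eta(X, Y)|_\S$. Since the linearised fundamental forms of both $h$ (by hypothesis) and $\L_V g$ (by the preceding step) vanish, so do those of $\eta$, yielding $\na_\nu \eta(X, Y)|_\S = 0$ for $X, Y \in T\S$. The remaining components are extracted from $\na \cdot \overline\eta|_\S = 0$: in a frame adapted to $\S$, using $\eta|_\S = 0$ together with $\na_\nu \eta(X, Y)|_\S = 0$, a short computation gives $\na \cdot \overline\eta(X)|_\S = -\na_\nu \eta(\nu, X)|_\S$ for $X \in T\S$ and $\na \cdot \overline\eta(\nu)|_\S = -\tfrac12 \na_\nu \eta(\nu, \nu)|_\S$, forcing both to vanish. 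The support assertion $\supp(V) \subset J(K)$ when $\supp(h) \subset J(K)$ then follows from finite speed of propagation applied to the wave equation for $V$, whose source and initial data are supported in $J(K)$.

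The main obstacle lies in the step showing that the linearised first and second fundamental forms of $\L_V g$ vanish whenever $V|_\S = 0$. In the smooth case the isometric-flow argument is transparent, but for distributional $V$ one must either invoke density and continuity in the Sobolev topologies of Section \ref{sec: Notation} or verify directly from the defining tensorial formulas that both forms depend on $V$ only through its restriction and first normal derivative along $\S$ in such a way that they vanish as soon as $V|_\S = 0$.
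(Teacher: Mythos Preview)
Your overall strategy coincides with the paper's: solve $\na^*\na V^\flat = -\na\cdot\overline h$ with $V|_\S=0$ and $\na_\nu V|_\S = \tfrac12 h(\nu,\nu)\nu + h(\nu,\cdot)^\sharp$, then show $\eta = h-\L_Vg$ has vanishing Cauchy data and satisfies $\Box_L\eta=0$. Your verification of $\na_\nu\eta|_\S=0$ via the vanishing linearised second fundamental form together with $\na\cdot\overline\eta|_\S=0$ is correct and slightly more conceptual than the paper's direct computation.

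The step you flag as the ``main obstacle'' is not one. The linearised first and second fundamental forms of $\L_Vg$ are given explicitly by the paper's formulas \eqref{eq: GaugeProducing1}--\eqref{eq: GaugeProducing2} in terms of $(N,\b)$ with $V|_\S = N\nu+\b$; when $V|_\S=0$ these vanish identically, for distributional $V$ just as well as for smooth $V$, with no flow argument needed.

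The genuine gap is elsewhere: you assert $V\in CH^{k+1}_{loc}(M,TM,t)$, but your construction does not deliver this. The source term $\na\cdot\overline h$ lies only in $CH^{k-1}_{loc}$, so Theorem~\ref{thm: WellposednessLinearWaves} yields merely $V\in CH^k_{loc}$. This suffices to run the argument and conclude $h=\L_Vg$, but the theorem promises the sharper regularity $V\in CH^{k+1}_{loc}$. The paper closes this gap with Lemma~\ref{le: regularity_lie_derivative}: once you know $V\in CH^k_{loc}$ \emph{and} $\L_Vg = h\in CH^k_{loc}$, an elliptic argument on each leaf $\S_\tau$ (using that $X\mapsto\L_X\g_\tau$ and $d$ have injective principal symbol) recovers the extra spatial derivative and hence $V\in CH^{k+1}_{loc}$. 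Without this step your proof is incomplete.
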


We start by proving a technical lemma which is reminiscent of elliptic regularity theory. 
The difference is that we work with finite energy spaces and not Sobolev spaces.

\begin{lemma} \label{le: regularity_lie_derivative}
Let $V \in CH_{loc}^k(M, TM, t)$ with $\L_V g \in CH^k_{loc}(M, S^2M, t)$. Then $V \in CH_{loc}^{k+1}(M, TM, t)$.
\end{lemma}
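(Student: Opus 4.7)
The plan is to realise $V^\flat$ as the solution of a linear wave equation and then extract the missing derivative from the well-posedness theorem for that equation. Applying formula \eqref{eq: Killing_wave} to the vacuum metric $g$ gives
\[
    \na^*\na V^\flat = -\na \cdot \overline{\L_V g},
\]
whose right hand side lies in $CH^{k-1}_{loc}(M, T^*M, t)$, because first-order differential operators act continuously from $CH^k_{loc}$ into $CH^{k-1}_{loc}$ (Section \ref{sec: Notation}). Thus $V^\flat$ solves the linear wave equation $\na^*\na V^\flat = F$ with source $F \in CH^{k-1}_{loc}$.

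To apply Theorem \ref{thm: WellposednessLinearWaves} at the level $CH^{k+1}_{loc}$, I would verify that the Cauchy data are one degree smoother than what $V \in CH^k_{loc}$ a priori provides: $V^\flat|_\S \in H^{k+1}_{loc}(\S)$ and $\na_\nu V^\flat|_\S \in H^k_{loc}(\S)$. The hypothesis $\L_V g \in CH^k_{loc}$ supplies exactly this extra derivative on the slice. Decomposing $V|_\S = V^\top + V^\perp \nu$, a short computation in a Gauss frame gives
\[
    \L_V g|_{T\S \times T\S} = \L_{V^\top}\g + 2 V^\perp \tk,
\]
so $\L_{V^\top}\g \in H^k_{loc}(\S)$. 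Since $W \mapsto \L_W \g$ has injective principal symbol on the Riemannian manifold $(\S, \g)$, overdetermined elliptic regularity (the Korn estimate) upgrades $V^\top$ from $H^k_{loc}(\S)$ to $H^{k+1}_{loc}(\S)$. The mixed components $\L_V g(\nu, X)|_\S$ and the normal-normal component $\L_V g(\nu, \nu)|_\S$, combined with the already-upgraded $V^\top$, then algebraically recover the missing $H^{k+1}$-regularity of $V^\perp$ and the $H^k$-regularity of $\na_\nu V^\flat|_\S$.

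Given these upgraded Cauchy data and $F \in CH^{k-1}_{loc}$, Theorem \ref{thm: WellposednessLinearWaves} produces a unique $W \in CH^{k+1}_{loc}(M, T^*M, t)$ solving $\na^*\na W = F$ with $W|_\S = V^\flat|_\S$ and $\na_\nu W|_\S = \na_\nu V^\flat|_\S$. Since $V^\flat$ itself satisfies the same Cauchy problem at the coarser level $CH^k_{loc}$, the uniqueness part of the same theorem, applied at that weaker regularity, forces $V^\flat = W$, so $V \in CH^{k+1}_{loc}(M, TM, t)$, as required.

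The hardest step will be the Cauchy data upgrade on $\S$, and specifically the normal scalar $V^\perp$: unlike $V^\top$, it is not controlled by any single symbol-injective component of $\L_V g|_\S$, and one has to interleave several component identities from $\L_V g(\nu, \cdot)|_\S$ with the tangential Korn regularity of $V^\top$ to close the bootstrap. This is the finite-energy incarnation of ordinary elliptic regularity that the author flags just before the lemma.
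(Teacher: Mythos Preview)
Your approach has a genuine gap at the final propagation step. To obtain $W\in CH^{k+1}_{loc}$ from Theorem~\ref{thm: WellposednessLinearWaves} (applied with the index shifted by one), you need the source term to lie in $CH^{k}_{loc}$, not merely in $CH^{k-1}_{loc}$. But $F=-\na\cdot\overline{\L_Vg}$ is a first-order differential operator applied to $\L_Vg\in CH^{k}_{loc}$, so it is only guaranteed to be in $CH^{k-1}_{loc}$; there is no mechanism in your argument that promotes it further. With $F\in CH^{k-1}_{loc}$ the theorem only returns $W\in CH^{k}_{loc}$, which is exactly the regularity of $V$ you started with. Upgrading the Cauchy data on a single slice is not enough: for hyperbolic equations, the gain of one derivative for the solution requires the gain of one derivative simultaneously for the source, and here the source loses a derivative relative to $\L_Vg$.

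This is precisely why the paper does \emph{not} use the wave equation for $V$. Instead it proves directly that $\n^{j}_{t,\ldots,t}V\in C^0\bigl(t(M),H^{k+1-j}_{loc}(\S_\cdot)\bigr)$ for every $j\geq 0$, working slice by slice. For each $j$ one first commutes $\n_t^j$ past the Lie derivative to get $\L_{\n^{j}_{t,\ldots,t}V}g\in CH^{k-j}_{loc}$, then splits $\n^{j}_{t,\ldots,t}V|_{\S_\tau}$ into tangential and normal parts and applies your Korn-type argument (injective symbol of $X\mapsto\L_X\g_\tau$, then injective symbol of $d$ on functions) on every leaf $\S_\tau$. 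In other words, the ``Cauchy data upgrade'' you outline for $j=0$ is essentially correct and is the heart of the proof, but it must be carried out for \emph{every} $j$ on \emph{every} slice; the wave-equation shortcut that would replace this iteration does not close because of the derivative loss in $F$.
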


\begin{proof}[Proof of Lemma \ref{le: regularity_lie_derivative}]
By assumption, 
\[
	\n^{j}_{t, \hdots, t} V \in C^0(t(M), H^{k - j}_{loc}(\S_\cdot))
\]
for all integers $j \geq 0$. 
We would be done if we could show that $\n^{j}_{t, \hdots, t} V \in C^0(t(M), H^{k - j+1}_{loc}(\S_\cdot))$ for all integers $j \geq 0$. 
By commuting derivatives, note that
\begin{align*}
		\L_{\n^{j}_{t, \hdots, t} V} g(X, Y) &= g(\n_X\n^{j}_{t, \hdots, t} V, Y) + g(\n_Y \n^{j}_{t, \hdots, t}V, X) \\
		&= (\n_t)^j\L_Vg(X, Y) + P_j(V)(X, Y),
\end{align*}
where $P_j$ is some differential operator of order $j$. 
Using the assumptions, this shows that
\[
	\L_{\n^{j}_{t, \hdots, t} V} g \in CH_{loc}^{k-j}(M, t).
\]
For each $\tau \in t(M)$, let $(\g_\tau, \tk_\tau)$ be the induced first and second fundamental forms on the Cauchy hypersurface $\S_\tau$.
Let $\n^{j}_{t, \hdots, t} V|_{\S_\tau} =: (\n^{j}_{t, \hdots, t} V)^\perp|_{\S_\tau} \nu_\tau + (\n^{j}_{t, \hdots, t} V)^\parallel|_{\S_\tau}$ be the projection onto parallel and normal components with respect to $\S_\tau$, where $\nu_\tau$ is the future pointing normal vector field along $\S_\tau$. Using this, we get a split $TM|_{\S_\tau} \cong \R \oplus T\S_\tau$. Note that 
\[
	\L_{(\n^{j}_{t, \hdots, t} V)^\parallel|_{\S_\tau}} \g_\tau = \L_{(\n^{j}_{t, \hdots, t} V)^\parallel}g|_{\S_\tau} - 2 (\n^{j}_{t, \hdots, t} V)^\perp|_{\S_\tau} \tk_\tau.
\]
It follows that 
\[
	\tau \mapsto \L_{(\n^{j}_{t, \hdots, t} V)^\parallel|_{\S_\tau}} \g_\tau \in CH_{loc}^{k-j}(M, t) \subset C^0(t(M), H_{loc}^{k-j}(\S_\cdot)).
\]
Since 
\[
	X \mapsto \L_X \g_{\tau} \in \mathrm{Diff}_1(T\S_\tau, S^2\S_\tau)
\]
is a differential operator of injective principal symbol, elliptic regularity theory implies that $(\n^{j}_{t, \hdots, t} V)^\parallel \in C^0(t(M), H_{loc}^{k+1-j}(\S_\cdot))$ for all integers $j \geq 0$. Using this, we conclude that
\begin{align*}
	\d_X ((\n^{j}_{t, \hdots, t} V)^\perp) &= - \a g(\n_X \n^{j}_{t, \hdots, t} V, \grad(t)) + g((\n^{j}_{t, \hdots, t} V)^\parallel, \n_X(\a\grad(t))) \\
		&= -\a \L_{\n^{j}_{t, \hdots, t}V}g (\grad(t), X) + \a g(\n_t \n^{j}_{t, \hdots, t} V, X) \\
		& \quad + g((\n^{j}_{t, \hdots, t} V)^\parallel, \n_X(\a\grad(t))) \\
		&= - \a \n^{j}_{t, \hdots, t} \L_Vg (\grad(t), X) + \a g((\n^{j+1}_{t, \hdots, t} V)^\parallel, X) \\
		& \quad + Q_j(V)(X) \in C^{0}(t(M), H_{loc}^{k-j}(\S_\cdot))
\end{align*}
for all $X \in T\S_\cdot$, since $Q_j$ is some differential operator of order $j$. We conclude that
\[
	d((\n^{j}_{t, \hdots, t} V)^\perp) \in C^{0}(t(M), H_{loc}^{k-j}(\S_\cdot, T^*\S_\cdot)).
\]
Since $d$ is a first order linear differential operator mapping functions to one-forms on $\S_\tau$ and its principal symbol is injective, we conclude that $(\n^{j}_{t, \hdots, t} V)^\perp \in C^0(t(M), H_{loc}^{k+1-j}(\S_\cdot))$ for all integers $j \geq 0$. We conclude that 
\[
	\n^{j}_{t, \hdots, t} V \in  C^0(t(M), H_{loc}^{k+1-j}(\S_\cdot))
\]
for all integers $j \geq 0$, which is the same as $V \in CH_{loc}^{k+1}(M, TM, t)$.
\end{proof}

The proof of the Theorem \ref{thm: Uniqueness} is a generalisation of the proof of \cite{FewsterHunt2013}*{Thm. 3.3} to solutions of low regularity. 

\begin{proof}[Proof of Theorem \ref{thm: Uniqueness}] 
By Section \ref{sec: Notation}, we know that $\na \cdot \overline h \in CH_{loc}^{k-1}(M, T^*M, t)$. By Theorem \ref{thm: WellposednessLinearWaves}, we can define
\[
	V \in CH_{loc}^k(M, TM, t)
\]
as the unique solution to
\begin{align}
	\na^*\na V =& - \na \cdot \overline h^\sharp, \label{eq: DefiningV} \\
	V|_\S =& 0, \nonumber \\
	\na_\nu V|_\S =& \frac12 h(\nu, \nu) \nu + h(\nu, \cdot)^\sharp, \nonumber
\end{align}
where $\sharp: T^*M \to TM$ is the musical isomorphism with inverse $\flat:TM \to T^*M$. If $\supp(h) \subset J(K)$ for some subset $K \subset \S$, then \cite{BaerWafo2014}*{Rmk. 16} implies that $\supp(V) \subset J(K)$. By equation \eqref{eq: Killing_wave}, we have
\[
	\na \cdot \overline{\L_V g} = - \na^* \na V^\flat = \na \cdot \overline h,
\]
where $\overline{\L_V g} :=\L_V g - \frac12 \tr_g\left(\L_V g\right) g$. Hence
\begin{align*}
	0  =& 2 D\Ric(h - \L_Vg) \\
		=& \square_L (h - \L_Vg) + \L_{\na \cdot(\overline h- \overline{\L_Vg} )^\sharp } g \\
		=& \square_L (h - \L_Vg).
\end{align*}
Since $V \in CH^k_{loc}(M,TM, t)$, we know that $\L_V g \in CH^{k-1}_{loc}(M, S^2M, t)$, which implies that $h - \L_V g \in CH_{loc}^{k-1}(M, S^2M, t)$. Hence, if we knew that 
\begin{align}
	(h - \L_Vg)|_\S &= 0, \label{eq: ValueEq} \\
	\na_\nu(h - \L_Vg)|_\S &=0, \label{eq: NormalEq}
\end{align}
then Theorem \ref{thm: WellposednessLinearWaves} would imply that $h - \L_V g = 0$ as asserted. We start by showing \eqref{eq: ValueEq}. Since $V|_\S = 0$ and $\na_\nu V|_\S = \frac12 h(\nu, \nu) \nu + h(\nu, \cdot)^{\sharp}$ and $\h = 0$, we get for all $X, Y \in T\S$,
\begin{align*}
	h(X, Y) &= \h(X, Y) \\
		&= 0 \\
		&= g(\na_X V, Y) + g(\na_Y V, X) \\
		&= \L_Vg(X, Y), \\
	h(X, \nu) %= g(h(\nu, cdot)^\sharp, X) \\
		&= g(\na_\nu V, X) \\
		&= g(\na_\nu V, X) + g(\na_X V, \nu) \\
		&= \L_Vg(\nu, X), \\
	h(\nu, \nu) &= 2g(\na_\nu V, \nu) \\
	&= \L_Vg(\nu, \nu).
\end{align*}
We continue by showing \eqref{eq: NormalEq}. Since $\m = 0$, we get for $X, Y \in T\S$ (recall Definition \ref{def: LinearisedFundamentalForms})
\[
    \na_\nu h(X,Y) = h(\nu, \nu) \tk(X,Y) + \na_X h(\nu, Y) + \na_Y h(\nu, X).
\]
Using $\h = 0$ and $V|_\S = 0$, we get
\begin{align*}
	\na_\nu \L_Vg(X,Y) &= g(\na^2_{\nu, X} V, Y) + g(\na^2_{\nu, Y} V, X) \\
		&= g(\na^2_{X, \nu} V, Y) + g(\na^2_{Y, \nu} V, X) + R(\nu, X, V, Y) + R(\nu, Y, V, X) \\
		&= \d_X g(\na_\nu V, Y) - g(\na_\nu V, \na_X Y) + \d_Y g(\na_\nu V, X) - g(\na_\nu V, \na_Y X) \\
		&= \d_X h(\nu, Y) - h(\nu, \n_XY) - \frac12 h(\nu, \nu) g(\nu, \n_X Y) \\
			& \qquad + \d_Y h(\nu, X) - h(\nu, \n_YX) - \frac12 h(\nu, \nu) g(\nu, \n_Y X) \\
			&= \n_Xh(\nu, Y) + \n_Y h(\nu, X) + h(\nu, \nu)\tk(X, Y) \\
			&= \na_\nu h(X, Y),
\end{align*}
since $\na_X\nu \in T\S$ and therefore $\na_{\na_X \nu}V = 0$. What remains to show is that $\na_\nu (h - \L_V g)|_\S(\nu, \cdot) = 0$. Recall that 
\[
 \na \cdot \overline{\L_V g} = \na \cdot \overline h,
\]
which is equivalent to
\begin{equation}\label{eq: TraceReversalUniqueness}
 \na \cdot {\L_V g}(W) - \frac12 \partial_W \tr_g(\L_V g) = \na \cdot h(W) - \frac12 \partial_W \tr_g(h),
\end{equation}
for all $W \in TM$. Note that from what is shown above, we know that $\tr_g(\L_V g)|_\S = \tr_g(h)|_\S$. Therefore, for  $X \in T\S$, we have $\partial_X \tr_g(\L_V g) = \partial_X \tr_g(h)$, so
\[
 \na \cdot {\L_V g}(X) = \na \cdot h(X),
\]
which simplifies to
\[
 \na_\nu {\L_V g}(X, \nu) = \na_\nu h(X, \nu).
\]
Instead inserting $\nu$ into equation \eqref{eq: TraceReversalUniqueness}, gives
\begin{align*}
 0 =& \na \cdot \overline{\L_V g}(\nu) - \na \cdot \overline h(\nu) \\
 =& \na \cdot ( \L_V g - h )(\nu) - \frac12 \partial_\nu \left( \tr_g(\L_V g) - \tr_g (h) \right) \\
 =& \na \cdot ( \L_V g - h )(\nu) - \frac12 \tr_g (\na_\nu \left( \L_V g - h \right)) \\
 =& - \na_\nu \left( \L_V g - h \right)(\nu, \nu) + \frac12 \na_\nu \left( \L_V g - h \right)(\nu, \nu) \\
 =& - \frac12 \na_\nu \left( \L_V g - h \right)(\nu, \nu).
\end{align*}
We conclude that
\[
 \na_\nu(h - \L_V g)(\nu, \nu) = 0.
\]
This shows that $h = \L_V g$. Lemma \ref{le: regularity_lie_derivative} implies the regularity of $V$.
\end{proof}

\subsection{Gauge producing initial data and gauge solutions} \label{sec: DegenerateInitialData} \label{sec: gpid}

In this section, we study the structure of the space of gauge solutions and gauge producing initial data. We consider from now on \emph{compactly supported initial data} and \emph{spatially compactly supported solutions}. The goal is to show that the spaces
\[
	\bigslant{\text{Initial data on }\S}{\text{Gauge producing initial data}}
\]
and
\[
	\bigslant{\text{Global solutions on }M}{\text{Gauge solutions}}
\]
equipped with the quotient topology are locally convex topological vector spaces. 

\begin{definition} \label{def: Solutions}
Define the \emph{solutions} of finite energy regularity $k \in \R \cup \{\infty\}$ as
\begin{equation*}
	 \Sol_{sc}^k(M, t) :=  CH^k_{sc}(M, S^2M, t) \cap \ker(D\Ric),
\end{equation*}
with the induced topology. 
\end{definition}

Since $D\Ric$ is a linear differential operator, it is continuous as an operator on distributions. 
Therefore, the solution space is a closed subspace and hence a locally convex topological vector space. 
Let us now define the subspace of gauge solutions. 

\begin{definition} \label{def: GaugeSolutions}
Define the \emph{gauge solutions} of finite energy and regularity $k \in \R \cup \{\infty\}$ as
\begin{align*}
	\G_{sc}^k(M,t) &:= \{ \L_V g \mid V \in CH^{k+1}_{sc}(M, TM, t) \} \subset \Sol_{sc}^k(M, t),
\end{align*}
with the induced topology. 
\end{definition}

We show later that the space of gauge solutions is a closed subspace of the solution space, which implies that the quotient space is a locally convex topological vector space. 
Let us define the space of solutions to the linearised constraint equation.

\begin{definition} \label{def: InitialData}
Define the \emph{initial data} of Sobolev regularity $k \in \R \cup \{\infty\}$ as
\begin{equation*}
	\ID_c^{k,k-1}(\S) := \left( H_c^{k}(\S, S^2\S) \times H_c^{k-1}(\S, S^2\S) \right)\cap \ker(D\Phi),
\end{equation*}
with the induced topology.
\end{definition}
Let
\begin{align*}
	\pi_\S: \Sol_{sc}^k(M, t) &\to \ID_c^{k,k-1}(\S)
\end{align*}
be the map that assigns to a solution the induced initial data, i.e. the linearised first and second fundamental forms. 
This map is given by Definition \ref{def: LinearisedFundamentalForms} and it is clear that $\pi_\S$ is continuous. 

\begin{definition} \label{def: GaugeProdInitialData}
Define the \emph{gauge producing initial data} of Sobolev regularity $k \in \R \cup \{\infty\}$ as
\begin{align*}
	\GP^{k,k-1}_c(\S) &:= \pi_\S(\G^k_{sc}(M,t)) \subset \ID_c^{k,k-1}(\S).
\end{align*}
\end{definition}

It will sometimes be necessary to consider only sections supported in a fixed compact set $K \subset \S$ or $J(K) \subset M$, for example $\ID_K^k(\S)$ or $\Sol_{J(K)}^k(M)$. 
The definitions in this case are analogous to Definitions \ref{def: Solutions}, \ref{def: GaugeSolutions}, \ref{def: InitialData} and \ref{def: GaugeProdInitialData}.

Let us study the space of gauge producing initial data $\GP^{k,k-1}_c(\S)$ in more detail. 
For $V \in CH^{k+1}_{sc}(M, TM, t)$, define $(N, \b) \in H_c^{k+1}(\S, \R \oplus T\S)$ by projecting $V|_\S$ to normal and tangential components, i.e. $V|_\S =: N\nu + \b$. 
Now define
\begin{align}
	\h_{N, \b} :=& 
		\L_{\b} \g +  2 \tk N, \label{eq: GaugeProducing1} \\
	\m_{N, \b} :=& \L_\b \tk + \Hess(N) + \left(2 \tk \circ \tk - \Ric(\tilde g) - (\tr_\g \tk)\tk \right) N. \label{eq: GaugeProducing2} 
\end{align}
We claim that $(\h_{N, \b}, \m_{N, \b}) = \pi_\S(\L_V g)$. 
Indeed, for each $X, Y \in T\S$, we have
 \begin{align*}
  \h_{N, \b}(X,Y) &= \L_V g(X,Y) \\
  	&= g(\n_X (\b + N\nu), Y) + g(\n_Y (\b + N\nu), X) \\
  	&= \L_\b \g(X,Y) + 2N\tk(X, Y), \\
  \m_{N, \b}(X,Y) &= - \frac12 \L_Vg(\nu, \nu) \tk(X,Y) - \frac12 \n_X \L_Vg(\nu, Y) \\
  	& \quad - \frac12 \n_Y \L_Vg(\nu, X) + \frac12 \n_\nu \L_V g(X,Y) \\ 
  	&= - g(\n_\nu V, \nu) \tk(X, Y) - \frac12 g(\n^2_{X,Y}V + \n^2_{Y,X}V, \nu) \\
  	&\quad + \frac12 R(\nu, X, V, Y) + \frac12 R(\nu, Y, V, X) \\
  	&= \L_\b \tk(X, Y) + \Hess(N)(X, Y) - \tilde \n_{\b} \tk(X, Y) \\
  	&\quad + \frac12 \left( \tilde \n_X \tk(Y, \b) +  \tilde \n_Y \tk(X, \b) \right) \\
  	&\quad + \frac12 R(\nu, X, \b, Y) + \frac12 R(\nu, Y, \b, X) \\
  	&\quad + N R(\nu, Y, \nu, X).
 \end{align*}
The classical Gauss and Codazzi equations now imply, using $\Ric(g) = 0$, that this coincides with \eqref{eq: GaugeProducing2}. 
In particular, $\GP^{k,k-1}_c(\S)$ can be defined intrinsically on $\S$ by equations \eqref{eq: GaugeProducing1} and \eqref{eq: GaugeProducing2} and is therefore independent of the chosen temporal function $t$ on $M$, as the notation suggests. 
We have shown the following lemma.
\begin{lemma}
For any $k \in \R \cup \{\infty\}$, the space of gauge producing initial data is given by
\begin{equation*} 
	\GP^{k,k-1}_c(\S) = \{(\h_{N, \b}, \m_{N, \b}) \text{ as in } \eqref{eq: GaugeProducing1} \text{ and } \eqref{eq: GaugeProducing2} \ | \ (N, \b) \in H_c^{k+1}(\S, \R \oplus T\S) \}.
\end{equation*}
\end{lemma}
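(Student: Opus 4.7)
The plan is to establish the two set-theoretic inclusions separately, using the explicit computation carried out just above the statement as the core identity on both sides.

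For the inclusion $\GP^{k,k-1}_c(\S) \subseteq \{(\h_{N, \b}, \m_{N, \b}) \mid (N, \b) \in H_c^{k+1}(\S, \R \oplus T\S)\}$, I would start with any $(\h, \m) = \pi_\S(\L_V g) \in \GP^{k,k-1}_c(\S)$ where $V \in CH^{k+1}_{sc}(M, TM, t)$. Setting $j = 0$ in the definition of $CH^{k+1}_{loc}$ gives $V|_\S \in H_{loc}^{k+1}(\S, TM|_\S)$, and the spatial compactness of $\supp(V)$ upgrades this to $V|_\S \in H_c^{k+1}(\S, TM|_\S)$. Orthogonal projection with respect to the smooth metric $g$ onto $T\S$ and $\R\nu$ preserves Sobolev regularity and compact support, so the decomposition $V|_\S = N\nu + \b$ yields $(N, \b) \in H_c^{k+1}(\S, \R \oplus T\S)$. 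The Gauss/Codazzi computation preceding the statement identifies $(\h, \m) = (\h_{N, \b}, \m_{N, \b})$, closing this direction.

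For the reverse inclusion, given $(N, \b) \in H_c^{k+1}(\S, \R \oplus T\S)$, I need to produce \emph{some} $V \in CH^{k+1}_{sc}(M, TM, t)$ with $V|_\S = N\nu + \b$. My approach is to extend via a linear wave equation: solve the Cauchy problem
\begin{align*}
	\na^*\na V &= 0, \\
	V|_\S &= N\nu + \b, \\
	\na_\nu V|_\S &= 0.
\end{align*}
Since the initial data lie in $H^{k+1}_c(\S, TM|_\S) \times H^k_c(\S, TM|_\S)$, Theorem~\ref{thm: WellposednessLinearWaves} produces a unique solution $V \in CH^{k+1}_{loc}(M, TM, t)$ with $\supp(V) \subset J(\supp(N) \cup \supp(\b))$, which is spatially compact; thus $V \in CH^{k+1}_{sc}(M, TM, t)$ and $\L_V g \in \G^k_{sc}(M, t)$. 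Since $V|_\S = N\nu + \b$, the same computation preceding the statement gives $\pi_\S(\L_V g) = (\h_{N, \b}, \m_{N, \b})$, so this pair lies in $\GP^{k,k-1}_c(\S)$.

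The only real issue is arranging the correct finite-energy regularity for the extension $V$, and routing through the wave equation handles this uniformly in $k$ via Theorem~\ref{thm: WellposednessLinearWaves}, so I do not expect a substantive obstacle. A more elementary extension (for instance transporting $N\nu + \b$ along integral curves of $\grad(t)$ and multiplying by a temporal cutoff) would be a conceivable alternative, but checking the $C^j(t(M), H_{loc}^{k+1-j}(\S_\cdot))$ seminorms by hand for every integer $j$ is considerably more tedious than invoking the wave equation result, which is why I would prefer the route above.
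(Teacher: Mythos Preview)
Your proof is correct and aligns with the paper's approach: the paper establishes the identity $\pi_\S(\L_V g) = (\h_{N,\b}, \m_{N,\b})$ via the Gauss--Codazzi computation immediately preceding the lemma and then declares the lemma proven, leaving the reverse inclusion implicit. Your wave-equation extension to produce a $V \in CH^{k+1}_{sc}(M,TM,t)$ from given $(N,\b) \in H^{k+1}_c(\S)$ is a clean and valid way to supply that detail, and invoking Theorem~\ref{thm: WellposednessLinearWaves} is indeed the most economical route to the required finite-energy regularity.
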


We are now ready to prove that the space of gauge producing initial data is a closed subspace of the space of initial data.

\begin{lemma} \label{le: IDclosedness}
Let $k \in \R \cup \{\infty\}$. The space
\begin{equation*}
	\GP^{k,k-1}_c(\S) \subset \ID_{c}^{k, k-1}(\S), 
\end{equation*}
is a closed subspace. 
The statement still holds if we substitute $c$ with $K$, for a fixed compact subset $K \subset \S$. 
\end{lemma}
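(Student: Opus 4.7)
The plan is to identify $\GP^{k,k-1}_c(\S)$ with the preimage of the gauge-solution space under the canonical solution map, and reduce the closedness claim to that of the gauge solutions in spacetime, which can then be handled via the wave equation \eqref{eq: Killing_wave}.

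By Lemma \ref{le: convergence_H_c} it suffices to treat the subspaces supported in a fixed compact $K \subset \S$. Let $E \colon \ID^{k,k-1}_K(\S) \to CH^k_{J(K)}(M, S^2M, t)$ denote the canonical solution operator from Theorem \ref{thm: Existence}; it is continuous by (the proof of) Corollary \ref{cor: Stability} and satisfies $\pi_\S \circ E = \mathrm{id}$. I claim that
\[
    \GP^{k,k-1}_K(\S) = E^{-1}\bigl(\G^k_{J(K)}(M,t)\bigr),
\]
where $\G^k_{J(K)}(M,t) := \G^k_{sc}(M,t) \cap CH^k_{J(K)}(M, S^2M, t)$. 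The inclusion $\supset$ is immediate from $\pi_\S \circ E = \mathrm{id}$. Conversely, if $(\h, \m) = \pi_\S(\L_V g) \in \GP^{k,k-1}_K(\S)$, then $E(\h, \m) - \L_V g$ solves $D\Ric = 0$ with vanishing initial data, and Theorem \ref{thm: Uniqueness} yields $W$ with $E(\h, \m) = \L_{V+W} g \in \G^k_{J(K)}(M,t)$. By continuity of $E$, the lemma therefore reduces to showing that $\G^k_{J(K)}(M,t)$ is a closed subspace of $CH^k_{J(K)}(M, S^2M, t)$.

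Suppose now that $\L_{U_i} g \to h$ in $CH^k_{J(K)}(M, S^2M, t)$. Using \eqref{eq: Killing_wave} together with $\Ric(g) = 0$, define $U \in CH^{k+1}_{sc}(M,TM,t)$ via Theorem \ref{thm: WellposednessLinearWaves} as the solution to
\[
    \na^*\na U^\flat = - \na \cdot \overline{h}
\]
with initial data $(U|_\S, \na_\nu U|_\S)$ chosen so that $(\L_U g)|_\S = h|_\S$ and $(\na_\nu \L_U g)|_\S = (\na_\nu h)|_\S$. Setting $f := h - \L_U g$, one finds by construction $\na \cdot \overline f = \na \cdot \overline h + \na^*\na U^\flat = 0$, and $D\Ric(f) = 0$ together with the definition of $D\Ric$ yields $\square_L f = 0$. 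Since $f$ has vanishing initial data, Theorem \ref{thm: WellposednessLinearWaves} forces $f = 0$, so $h = \L_U g \in \G^k_{J(K)}(M,t)$. The support statement $\supp(U) \subset J(K')$ is arranged via the finite propagation speed for $\na^*\na$.

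The principal obstacle is the existence of suitable initial data for $U$ in the previous step: the condition $(\L_U g)|_\S = h|_\S$ is an overdetermined first-order differential-algebraic system for $(U|_\S, \na_\nu U|_\S)$, whose solvability is not apparent from $h$ alone. The required compatibility must be propagated from the approximating sequence, since each $\L_{U_i} g$ trivially admits such initial data; passing to the limit uses an elliptic-regularity estimate on $\S$ in the spirit of Lemma \ref{le: regularity_lie_derivative}, applied after fixing a suitable normalization of $(U_i|_\S, \na_\nu U_i|_\S)$ modulo the Killing fields of $g$.
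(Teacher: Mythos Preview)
Your strategy reverses the paper's logic: you try to deduce closedness of $\GP^{k,k-1}_K(\S)$ from closedness of $\G^k_{J(K)}(M,t)$, whereas the paper goes the other way (Lemma~\ref{le: SOLclosedness} uses the present lemma). This is not circular per se, but the essential difficulty is not avoided, only relocated. The gap is exactly the ``principal obstacle'' you name. To show that the limit $h$ of a sequence $\L_{U_i}g$ is itself a Lie derivative, you need initial data for $U$ satisfying $(\L_U g)|_\S = h|_\S$; the tangential--tangential block of this system reads $\L_{\b}\g + 2N\tk = h|_{T\S \times T\S}$, an overdetermined first-order operator in $(\b, N)$. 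Showing that the right-hand side lies in its range, by passing to the limit along the approximants, is \emph{precisely} the statement that this operator has closed range on compactly supported sections --- i.e.\ essentially the first half of the very lemma you are proving. Your proposed fix (``elliptic-regularity estimate in the spirit of Lemma~\ref{le: regularity_lie_derivative}, after normalization modulo Killing fields'') is not an argument: Lemma~\ref{le: regularity_lie_derivative} upgrades regularity once a Lie derivative is already known to lie in a better space; it does not produce a limiting $(\b, N)$, nor does it give the a~priori bound $\norm{(\b_i,N_i)}_{H^{k+1}} \lesssim \norm{Q(\b_i,N_i)}$ needed for compactness. What is actually required is a closed-range result for a Laplace-type operator on $\S$ together with unique continuation to control supports --- and once you carry this out, you have reproduced the paper's direct proof, which works entirely on $\S$: it composes $Q\colon(N,\b) \mapsto (\h_{N,\b}, \m_{N,\b})$ with $\bigl(-\tilde\n\cdot + \tfrac12 d\tr,\; -\tr\bigr)$ to obtain a Laplace-type operator, then invokes Aronszajn unique continuation (Theorem~\ref{thm: aronszajn}) and Corollary~\ref{cor: Laplace-type closed image}.

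There is also a secondary issue in your reduction step. To deduce closedness of $\GP_c$ in the inductive-limit topology you must show that $\GP_c \cap \bigl(H^k_K \times H^{k-1}_K\bigr)$ is closed for each $K$, and this set may be strictly larger than $\GP_K$: gauge data supported in $K$ can arise from $(N,\b)$ supported in a larger compact set. The paper handles this by constructing an explicit enlargement $L \supset K$ (filling in the compact complementary components) and using unique continuation to force $\supp(N,\b) \subset L$; your reduction via Lemma~\ref{le: convergence_H_c} glosses over this point.
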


\begin{proof}
Consider the linear differential operator given by
\begin{align*}
	Q: H^{k+1}_c(\S, \R \oplus T\S) &\to H^{k}_c(\S, S^2\S) \times H^{k-1}_c(\S, S^2\S), \\
	(N, \b) &\mapsto (\h_{N, \b}, \m_{N, \b}).
\end{align*}
Since $\im(Q) = \GP^{k,k-1}_c(\S)$, the lemma is proven if we can show that $Q$ has closed image. 
We need to show that for each compact subset $K \subset \S$, $\im(Q) \cap H^k_K(\S) \times H^{k-1}_K(\S) \subset H^k_K(\S) \times H^{k-1}_K(\S)$ is closed. 
For a fixed compact subset $K \subset \S$, let us construct a set $L \subset \S$, containing $K$, such that if $\supp(Q(N, \b)) \subset L$ and $\supp(N, \b)$ is compact, then $\supp(N, \b) \subset L$. 
We construct $L$ as follows. Since $K$ is compact, $\partial K$ is compact, which implies that $M \backslash \mathring K$ has a finite amount of connected components.
Define $L$ to be the union of $K$ with all \emph{compact} connected components of $M \backslash \mathring K$. 
It follows that $L$ is compact, $K \subset L$ and that all components of $M \backslash \mathring L$ are non-compact. 
Let us show that $L$ has the desired properties. 
One calculates that the differential operator $P$, defined by
\begin{align*}
	P(N,\b) &:= \begin{pmatrix}
			-\tilde \na \cdot (\cdot) + \frac12 d \tr(\cdot)& 0 \\
			0 & -\tr(\cdot)
	\end{pmatrix}Q(N, \b) \\
	& = 
	\begin{pmatrix}
		 \tilde \na^* \tilde \na \b^\flat + l.o.t. \\
		 \tilde \na^* \tilde \na N + l.o.t.
	\end{pmatrix} \in H^{k-1}_K(\S, T^*\S \oplus \R)
\end{align*}
is a Laplace type operator. 
If $\supp(Q(N, \b)) \subset L$, and $\supp(N, \b)$ is compact, it follows that $\supp(P(N, \b)) \subset L$ and that $(M \backslash \mathring L) \cap \supp(N, \b) = \supp(N, \b) \backslash (\supp(N, \b) \cap \mathring L)$ is compact. Since each component of $M \backslash \mathring L$ was non-compact, Theorem \ref{thm: aronszajn} implies that $(N, \b) = 0$ on $M \backslash \mathring L$ and hence $\supp(N, \b) \subset L$ as claimed. Now if $Q(N_n, \b_n) \to (\h, \m)$ in $H^k_K(\S) \times H_K^{k-1}(\S)$, then $\supp(N_n, \b_n) \subset L$ and
\[
	P(N_n, \b_n) \to \begin{pmatrix}
			-\na \cdot (\h) + \frac12 d \tr(\h) \\
			-\tr(\m)
	\end{pmatrix}
\]
in $H^{k-1}_K(\S) \times H^{k-1}_K(\S)$. By Corollary \ref{cor: Laplace-type closed image}, we conclude that 
\[
	P: H^{k+1}_L(\S) \to H_L^{k-1}(\S)
\]
is an isomorphism onto its image and therefore there is a $(N, \b) \in H^{k+1}_L(\S)$ such that $(N_n, \b_n) \to (N, \b)$. We conclude that
\[
	(\h, \m) = \lim_{n \to \infty}Q(N_n, \b_n) = Q(N, \b),
\]
which finishes the proof.
\end{proof}

For later use, we need the following technical observation.

\begin{lemma} \label{le: SOLclosedness}
For $k \in \R \cup \{\infty\}$, 
\begin{equation*}
	\G_{sc}^k(M,t) = {\pi_\S}^{-1}(\GP^{k,k-1}_c(\S)).
\end{equation*}
In particular, 
\begin{equation*}
	\G_{sc}^k(M, t) \subset \Sol_{sc}^k(M, t)
\end{equation*}
is a closed subspace. The statement still holds if we substitute $c$ with $K$ and $sc$ with $J(K)$, for a fixed compact subset $K \subset \S$. 
\end{lemma}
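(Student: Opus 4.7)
The plan is to first prove the identity $\G_{sc}^k(M,t) = \pi_\S^{-1}(\GP^{k,k-1}_c(\S))$; the closedness assertion then follows immediately, since $\pi_\S$ is continuous and $\GP^{k,k-1}_c(\S) \subset \ID_c^{k,k-1}(\S)$ is closed by Lemma \ref{le: IDclosedness}. The inclusion $\G_{sc}^k(M,t) \subset \pi_\S^{-1}(\GP^{k,k-1}_c(\S))$ is essentially by construction: if $h = \L_V g$ with $V \in CH_{sc}^{k+1}(M, TM, t)$, decomposing $V|_\S = N\nu + \b$ puts $(N,\b)$ in $H_c^{k+1}(\S, \R \oplus T\S)$, and the computation preceding \eqref{eq: GaugeProducing1}--\eqref{eq: GaugeProducing2} shows that $\pi_\S(h) = (\h_{N,\b}, \m_{N,\b}) \in \GP^{k,k-1}_c(\S)$.

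For the reverse inclusion, suppose $h \in \Sol_{sc}^k(M,t)$ has $\pi_\S(h) = (\h_{N,\b}, \m_{N,\b})$ for some compactly supported $(N,\b)$. First I build an auxiliary extension $V \in CH_{sc}^{k+1}(M, TM, t)$ with $V|_\S = N\nu + \b$, for instance by solving the wave Cauchy problem $\na^*\na V = 0$ with initial data $V|_\S = N\nu + \b$ and $\na_\nu V|_\S = 0$ via Theorem \ref{thm: WellposednessLinearWaves}; finite speed of propagation ensures $\supp(V) \subset J(\supp(N,\b))$, so $V$ is indeed spatially compactly supported. By construction $\pi_\S(\L_V g) = (\h_{N,\b}, \m_{N,\b}) = \pi_\S(h)$, so the difference $h - \L_V g \in \Sol_{sc}^k(M,t)$ has vanishing linearised first and second fundamental forms. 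Applying Theorem \ref{thm: Uniqueness} yields $W \in CH^{k+1}_{loc}(M, TM, t)$ with $h - \L_V g = \L_W g$, and the support clause of that theorem lets me choose $\supp(W) \subset J(K)$ for any compact $K \subset \S$ containing the spatial support of $h - \L_V g$. Thus $V + W \in CH_{sc}^{k+1}$ and $h = \L_{V+W} g \in \G_{sc}^k(M,t)$. The variant with $c$ replaced by $K$ and $sc$ by $J(K)$ is identical, since $\supp(N,\b) \subset K$ then forces $\supp(V), \supp(W) \subset J(K)$ throughout the argument.

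The main obstacle I anticipate is the bookkeeping of supports: one must verify that the wave extension $V$ actually lands in $CH_{sc}^{k+1}$ rather than merely $CH_{loc}^{k+1}$, and that the support clause in Theorem \ref{thm: Uniqueness} can be invoked with the desired compact $K$. Both points hinge on finite speed of propagation for wave-type operators, which is available from the statements cited above. By contrast, Sobolev bookkeeping is automatic: the class $(N,\b) \in H_c^{k+1}$ sits precisely one degree above the target regularity $k$ of $h$, matching exactly the regularity shift in Theorem \ref{thm: Uniqueness}, so no loss occurs.
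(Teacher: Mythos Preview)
Your proof is correct and follows essentially the same route as the paper: subtract a gauge solution with matching initial data and apply Theorem~\ref{thm: Uniqueness}. The paper is slightly more direct, observing that since $\GP^{k,k-1}_c(\S)$ is \emph{defined} as $\pi_\S(\G^k_{sc}(M,t))$, the existence of some $\L_V g \in \G^k_{sc}(M,t)$ with $\pi_\S(\L_V g) = \pi_\S(h)$ is immediate, so your explicit wave-equation construction of $V$ is unnecessary (though perfectly valid).
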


\begin{proof}
Assume that $h \in \Sol_{sc}^k(M, t)$ and that 
\[
	\pi_\S(h) = \pi_\S(\L_Vg)
\]
for some $\L_V g \in \G_{sc}^k(M,t)$. Then $D\Ric(h - \L_Vg) = 0$ and $\pi_\S(h - \L_Vg) = 0$. By Theorem \ref{thm: Uniqueness}, there is a $W \in CH^{k+1}_{sc}(M,TM, t)$, such that
\[
	h = \L_V g + \L_W g = \L_{V+W}g
\]
which proves the statement. The smooth case is analogous. Since $\pi_\S$ is continuous, Lemma \ref{le: IDclosedness} implies the second statement. 
\end{proof}

The next lemmas give a natural way to understand the topology of the quotient spaces.

\begin{lemma} \label{le: GaugeInvariantInitialData}
	Let $k \in \R \cup \{\infty\}$. The l.c. topological vector space
	\begin{equation*}
	 	\bigslant{\ID_c^{k,k-1}(\S)}{\GP_c^{k, k-1}(\S)},
	\end{equation*}
	is the strict inductive limit of the l.c. topological vector spaces
	\begin{equation*}
		\bigslant{\ID_K^{k,k-1}(\S)}{\GP^{k,k-1}_{K}(\S)},
	\end{equation*}
	for compact subsets $K \subset \S$, with respect to the natural inclusions. In particular, it is an LF-space.
\end{lemma}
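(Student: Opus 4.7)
My plan is to identify the quotient topology on $\ID_c^{k,k-1}(\S)/\GP_c^{k,k-1}(\S)$ with the inductive limit topology on $\bigcup_K \ID_K^{k,k-1}(\S)/\GP_K^{k,k-1}(\S)$ via two applications of the universal property, and then separately verify that the system is strict (so the result is an LF-space). Throughout I adopt the natural convention $\GP_K^{k,k-1}(\S) := \GP_c^{k,k-1}(\S) \cap \ID_K^{k,k-1}(\S)$, which by Lemma \ref{le: IDclosedness} is a closed subspace of the Fr\'echet space $\ID_K^{k,k-1}(\S)$; hence each $\ID_K^{k,k-1}(\S)/\GP_K^{k,k-1}(\S)$ is itself a Fr\'echet space.

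First, for every pair of compacts $K \subseteq L$, the continuous inclusion $\ID_K \hookrightarrow \ID_L$ meets $\GP_L$ in exactly $\GP_K$, so it descends to a continuous injective linear map $\iota_{K,L} \colon \ID_K/\GP_K \to \ID_L/\GP_L$; analogously one obtains continuous injections $\iota_K\colon \ID_K/\GP_K \to \ID_c/\GP_c$. Since $\ID_c = \bigcup_K \ID_K$, the $\iota_K$ identify $\ID_c/\GP_c$ as a set with $\bigcup_K \ID_K/\GP_K$.

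To match the topologies, I would argue through universal properties. A linear map $f\colon \ID_c/\GP_c \to V$ into a locally convex space $V$ is continuous if and only if $f \circ q_c$ is continuous on $\ID_c$, where $q_c$ is the quotient map. Because $\ID_c$ itself carries the strict inductive limit topology from the $\ID_K$, this is equivalent to $f \circ q_c|_{\ID_K}$ being continuous for every compact $K$, and since this map factors through the quotient by $\GP_K$, equivalent to $f \circ \iota_K \colon \ID_K/\GP_K \to V$ being continuous for every $K$. But this is exactly the universal property of $\varinjlim_K \ID_K/\GP_K$, so the quotient topology on $\ID_c/\GP_c$ coincides with the inductive limit topology on $\bigcup_K \ID_K/\GP_K$.

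The main obstacle will be verifying that the inductive system is \emph{strict}, i.e.\ that each $\iota_{K,L}$ is a topological embedding onto a closed subspace, so that the limit genuinely is an LF-space. Continuity and injectivity are in hand from the above; by a standard open mapping argument in Fr\'echet spaces this reduces to showing that $\ID_K + \GP_L$ is closed in $\ID_L$. I would attack this along the same lines as the proof of Lemma \ref{le: IDclosedness}, approximating $(h,m) + Q(N,\b)$ in $\ID_L$ by sequences, applying the Laplace-type composition $P$ of $Q$ with the divergence-plus-trace operator to gain elliptic control, and using Aronszajn's unique continuation on the filled-in compact to pin the candidate gauge field into a fixed compact set; Corollary \ref{cor: Laplace-type closed image} then delivers a limit in $\ID_K + \GP_L$. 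Everything else in the lemma is a routine consequence.
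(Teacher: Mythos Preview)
Your core argument---matching the quotient topology with the inductive limit topology by running the universal property in both directions---is exactly what the paper does. The paper phrases it as checking continuity of the identity map in each direction, but the content is identical to your factorisation through $q_c$ and the $\iota_K$.

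Where you differ is in your treatment of strictness. The paper simply asserts in one line that ``$\ID_{K_n}/\GP_{K_n} \subset \ID_{K_{n+1}}/\GP_{K_{n+1}}$ are closed'' and moves on; you correctly isolate this as the only nontrivial step, reduce it to showing that $\ID_K + \GP_L$ is closed in $\ID_L$, and propose to establish this via the Laplace-type operator and unique continuation from Lemma~\ref{le: IDclosedness}. That is the right toolkit, and it is more than the paper itself offers. Do note, however, that your sketch is not yet a proof: when $(h_n,m_n) + Q(N_n,\b_n)$ converges there is no a priori reason the summands converge separately, so you must first localise to $\S\setminus K$ (where the $\ID_K$-piece vanishes) to gain elliptic control of $(N_n,\b_n)$ there, and then combine this with the support constraints to close up.

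One small caution on conventions: your definition $\GP_K := \GP_c \cap \ID_K$ need not coincide with the paper's intrinsic definition (image of $Q$ on $H^{k+1}_K$) when $K$ has compact complementary components. The unique continuation step in Lemma~\ref{le: IDclosedness} shows the two agree precisely for ``filled-in'' $K$, so take your exhaustion to consist of such compacts; this is harmless for the inductive limit.
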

\begin{proof}
Let us simplify notation by writing $\ID_K := \ID_K^{k,k-1}(\S)$, $\ID_c := \ID_c^{k,k-1}(\S)$, $\GP_K := \GP^{k,k-1}_{K}(\S)$ and $\GP_{c} := \GP^{k,k-1}_{c}(\S)$. 
Let $K_1 \subset K_2 \subset \hdots \bigcup_{n \in \N} K_n = \S$ be an exhaustion by compact subsets. 
Since $\ID_{K_n} \subset \ID_{K_{n+1}}$ is closed for all $n \in \N$, the strict inductive limit exists. 
By (a slight modification of) Lemma \ref{le: two topologies} we conclude that $\ID_c$ is the strict inductive limit space of the spaces $\ID_K$. 
Similarly, $\bigslant{\ID_{K_n}}{\GP_{K_n}} \subset \bigslant{\ID_{K_{n+1}}}{\GP_{K_{n+1}}}$ are closed and hence the strict inductive limit topology on $\bigslant{\ID_c}{\GP_c(\S)}$ exists. 
Call the quotient topology $\tau_{quot}$ and the strict inductive limit topology $\tau_{ind}$. The map
\[
	\left(\bigslant{\ID_c}{\GP_c}, \tau_{ind}\right) \to \left( \bigslant{\ID_c}{\GP_c}, \tau_{quot}\right)
\]
is continuous if and only if the restriction
\[
	\bigslant{\ID_{K_n}}{\GP_{K_n}} \to \left(\bigslant{\ID_c}{\GP_c}, \tau_{quot} \right)
\]
is continuous for all $n \in \N$. 
But this is clear, since $\ID_{K_n} \to \ID_c$ is continuous. 
Conversely, the map 
\[
	\left( \bigslant{\ID_c}{\GP_c}, \tau_{quot}\right) \to \left(\bigslant{\ID_c}{\GP_c}, \tau_{ind}\right)
\]
is continuous if and only if
\[
	\ID_c \to \left(\bigslant{\ID_c}{\GP_c}, \tau_{ind}\right)
\]
is continuous. 
This is however equivalent to 
\[
	\ID_{K_n} \to \left(\bigslant{\ID_c}{\GP_c}, \tau_{ind}\right)
\]
being continuous for all $n \in \N$. 
This true if and only if 
\[
	\bigslant{\ID_{K_n}}{\GP_{K_n}} \to \left(\bigslant{\ID_c}{\GP_c}, \tau_{ind}\right)
\]
is continuous for all $n \in \N$. 
But this is clear, by construction of the strict inductive limit topology. 
This establishes the claimed homeomorphism.
Since the quotient spaces $\bigslant{\ID_{K_n}}{\GP_{K_n}}$ are Fréchet spaces, its strict inductive limit will be a LF-space by definition.
\end{proof}

\begin{lemma} \label{le: GaugeInvariantSolutions}
	Let $k \in \R \cup \{\infty\}$. The l.c. topological vector space
	\begin{equation*}
		\bigslant{\Sol_{sc}^k(M, t)}{\G_{sc}^k(M,t)},
	\end{equation*}
	is the strict inductive limit of the l.c. topological vector spaces
	\begin{equation*}
		\bigslant{\Sol_{J(K)}^k(M, t)}{\G_{J(K)}^k(M,t)},
	\end{equation*}
	for compact subsets $K \subset \S$, with respect to the natural inclusions. In particular, it is an LF-space.
\end{lemma}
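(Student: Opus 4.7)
The plan is to mimic the proof of Lemma \ref{le: GaugeInvariantInitialData} verbatim, substituting Lemma \ref{le: two topologies2} for Lemma \ref{le: two topologies} and working with spatially compactly supported finite energy sections on $M$ in place of compactly supported Sobolev sections on $\S$. Once the strict inductive limit structure on the quotients is in place, the identification between the quotient topology and the inductive limit topology on $\bigslant{\Sol^k_{sc}(M,t)}{\G^k_{sc}(M,t)}$ will follow from the same two-step diagram chase via the universal properties of quotients and of strict inductive limits as in the earlier proof, and the LF-property will follow because each $\bigslant{\Sol_{J(K_n)}^k(M,t)}{\G_{J(K_n)}^k(M,t)}$ is Fr\'echet.

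To set up the direct system, I would fix an exhaustion $K_1 \subset K_2 \subset \ldots \subset \S$ by compact subsets chosen so that each $K_n$ equals its own fill in the sense of the proof of Lemma \ref{le: IDclosedness}; then $J(K_n)$ exhausts $M$ by spatially compact sets. Lemma \ref{le: SOLclosedness} yields the closed inclusions $\G_{J(K_n)}^k(M,t) \subset \Sol_{J(K_n)}^k(M,t) \subset \Sol_{J(K_{n+1})}^k(M,t)$, so the strict inductive limits of the spaces of solutions and of gauge solutions are both defined, and Lemma \ref{le: two topologies2} applied to $\ker(D\Ric)$ identifies the subspace topology on $\Sol^k_{sc}(M,t)$ with its strict inductive limit topology.

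The delicate step, which in Lemma \ref{le: GaugeInvariantInitialData} is handled tacitly, will be to verify that the natural map $\bigslant{\Sol_{J(K_n)}^k}{\G_{J(K_n)}^k} \hookrightarrow \bigslant{\Sol_{J(K_{n+1})}^k}{\G_{J(K_{n+1})}^k}$ is injective, i.e.\ that $\Sol_{J(K_n)}^k \cap \G_{J(K_{n+1})}^k = \G_{J(K_n)}^k$; this is where I expect the main obstacle to lie. Given $h = \L_V g$ with $V \in CH^{k+1}_{J(K_{n+1})}(M, TM, t)$ and $\supp(h) \subset J(K_n)$, the induced initial data $\pi_\S(h)$ is supported in $K_n$ and lies in $\GP^{k,k-1}_{K_{n+1}}(\S)$, so the fill construction in the proof of Lemma \ref{le: IDclosedness} applied to $K_n$ will produce $(N, \b) \in H^{k+1}_{K_n}(\S, \R \oplus T\S)$ with $\pi_\S(h) = (\h_{N,\b}, \m_{N,\b})$. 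Extending $(N, \b)$ to $\tilde V \in CH^{k+1}_{J(K_n)}(M, TM, t)$ with $\tilde V|_\S = N\nu + \b$ and applying Theorem \ref{thm: Uniqueness} to $h - \L_{\tilde V} g$, which has vanishing linearised initial data and support in $J(K_n)$, will then produce $W \in CH^{k+1}_{J(K_n)}(M, TM, t)$ with $h = \L_{\tilde V + W} g \in \G_{J(K_n)}^k(M,t)$, as required.
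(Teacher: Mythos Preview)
Your proposal is correct and follows exactly the approach the paper takes: the paper's own proof is the single sentence ``analogous to the proof of Lemma \ref{le: GaugeInvariantInitialData}, using Lemma \ref{le: two topologies2} instead of Lemma \ref{le: two topologies}.'' You go beyond the paper by carefully spelling out the injectivity of the connecting maps $\bigslant{\Sol_{J(K_n)}^k}{\G_{J(K_n)}^k} \hookrightarrow \bigslant{\Sol_{J(K_{n+1})}^k}{\G_{J(K_{n+1})}^k}$ via the fill construction and Theorem \ref{thm: Uniqueness}, a point the paper leaves entirely implicit.
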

\begin{proof}
The proof is analogous to the proof of Lemma \ref{le: GaugeInvariantInitialData}, using Lemma \ref{le: two topologies2} instead of Lemma \ref{le: two topologies}. 
\end{proof}

\subsection{Continuous dependence on initial data} \label{sec: Main}

Let us now state and prove the main result of this section, the well-posedness of the Cauchy problem of the linearised Einstein equation. Recall Section \ref{sec: DegenerateInitialData} for the definitions of the function spaces below.

\begin{thm}[Well-posedness of the Cauchy problem] \label{thm: Wellposedness} 
Let $k \in \R \cup \{\infty\}$. The linear solution map
\begin{equation*}
	\mathrm{Solve}^k: \bigslant{\ID_c^{k,k-1}(\S)}{\GP^{k,k-1}_{c}(\S)} \to \bigslant{\Sol_{sc}^k(M, t)}{\G_{sc}^k(M,t)}
\end{equation*}
is an isomorphism of locally convex topological vector spaces. In fact, both spaces are $LF$-spaces.
\end{thm}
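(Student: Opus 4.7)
The plan is to build the isomorphism explicitly from the results already proven, namely Theorems \ref{thm: Existence} and \ref{thm: Uniqueness}, and to deduce continuity from the LF-structure established in Lemmas \ref{le: GaugeInvariantInitialData} and \ref{le: GaugeInvariantSolutions}. For a representative $(\h,\m)\in\ID_c^{k,k-1}(\S)$ apply Theorem \ref{thm: Existence} to produce the unique $h\in\Sol_{sc}^k(M,t)$ satisfying the gauge-fixing of Lemma \ref{le: GaugeChoice}, and set $\mathrm{Solve}^k[(\h,\m)]:=[h]$. Well-definedness is the heart of the setup: if $(\h,\m)=(\h_{N,\b},\m_{N,\b})\in\GP^{k,k-1}_c(\S)$, I extend $(N,\b)$ to a vector field $V\in CH^{k+1}_{sc}(M,TM,t)$ by solving $\n^*\n V=0$ with initial data $(N\nu+\b,0)$; finite speed of propagation places $\supp(V)$ inside $J(\supp(N,\b))$ and $\pi_\S(\L_Vg)=(\h_{N,\b},\m_{N,\b})$. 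Then $h-\L_Vg$ is a solution with vanishing induced initial data, hence by Theorem \ref{thm: Uniqueness} equals $\L_Wg$ for some $W$, so $h=\L_{V+W}g\in\G_{sc}^k(M,t)$. Linearity is clear.

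Bijectivity is short. For surjectivity, any $[h]\in\Sol_{sc}^k(M,t)/\G_{sc}^k(M,t)$ satisfies $\pi_\S(h)\in\ID_c^{k,k-1}(\S)$ by \eqref{eq: ricciNormalNormal}-\eqref{eq: ricciNormalDot}, and by applying Existence to $\pi_\S(h)$ together with Uniqueness for the difference, one obtains $[h]=\mathrm{Solve}^k[\pi_\S(h)]$. Injectivity is immediate: if $\mathrm{Solve}^k[(\h,\m)]=0$ then $h=\L_Vg$, and by definition $\pi_\S(\L_Vg)\in\GP^{k,k-1}_c(\S)$.

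For continuity, by Lemmas \ref{le: GaugeInvariantInitialData} and \ref{le: GaugeInvariantSolutions} both quotients are strict inductive limits over compact $K\subset\S$, so it suffices to treat each index level. Fix $K$ compact. Corollary \ref{cor: Stability} (i.e.\ continuous dependence on initial data for $\square_L$ together with the algebraic formulas of Lemma \ref{le: GaugeChoice}) gives that the specific lift $S_K:\ID_K^{k,k-1}(\S)\to\Sol_{J(K)}^k(M,t)$ from Theorem \ref{thm: Existence} is continuous; composing with the canonical projection yields a continuous map into $\Sol_{J(K)}^k(M,t)/\G_{J(K)}^k(M,t)$. The well-definedness analysis above, upgraded via the support-preserving form of Theorem \ref{thm: Uniqueness}, shows that $S_K$ maps $\GP_K^{k,k-1}(\S)$ into $\G_{J(K)}^k(M,t)$, so this descends to a continuous map on $\ID_K^{k,k-1}(\S)/\GP_K^{k,k-1}(\S)$. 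Assembling over $K$ via the strict inductive limit, $\mathrm{Solve}^k$ is continuous.

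The inverse is induced by $\pi_\S$. The restriction $\pi_\S:\Sol_{J(K)}^k(M,t)\to\ID_K^{k,k-1}(\S)$ is continuous because Definition \ref{def: LinearisedFundamentalForms} is a first-order differential operation applied to a section in $CH_{J(K)}^k(M,S^2M,t)$, and $J(K)\cap\S=K$; by the very definition of $\GP_K^{k,k-1}(\S)$ it sends $\G_{J(K)}^k(M,t)$ into $\GP_K^{k,k-1}(\S)$, hence descends continuously to the quotients and further into $\ID_c^{k,k-1}(\S)/\GP_c^{k,k-1}(\S)$. Applying the universal property of Lemma \ref{le: GaugeInvariantSolutions} produces a continuous map on $\Sol_{sc}^k(M,t)/\G_{sc}^k(M,t)$ which, by the bijectivity computation, is inverse to $\mathrm{Solve}^k$. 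The LF-property is inherited directly from Lemmas \ref{le: GaugeInvariantInitialData} and \ref{le: GaugeInvariantSolutions}. The delicate step, and thus the main technical obstacle, is to match the support classes under $S_K$ and $\pi_\S$ so that both maps legitimately factor through the right quotients at fixed compact level $K$; this is exactly where one must invoke the sharpened finite-speed statements in Theorems \ref{thm: Existence} and \ref{thm: Uniqueness} and the wave-extension construction of $V$ from $(N,\b)$.
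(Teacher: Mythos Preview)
Your proof is correct and structurally mirrors the paper's: reduce to fixed compact $K$, establish the isomorphism there, then assemble via the strict inductive limit descriptions of Lemmas \ref{le: GaugeInvariantInitialData} and \ref{le: GaugeInvariantSolutions}. The one substantive difference is how you handle the fixed-$K$ step. The paper (in Lemma \ref{le: FixedCompact}) only verifies directly that the inverse map $\hat\pi_\S$ is continuous and then invokes the Open Mapping Theorem for Fr\'echet spaces to conclude that $\mathrm{Solve}_K^k$ is a homeomorphism. You instead prove continuity in both directions by hand: forward via Corollary \ref{cor: Stability} (i.e.\ continuous dependence for $\square_L$ combined with the algebraic Cauchy-data formulas of Lemma \ref{le: GaugeChoice}), and backward via $\pi_\S$. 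Your route avoids the Open Mapping Theorem at the cost of checking that the gauge-fixed lift $S_K$ is continuous and respects the support filtration; the paper's route is shorter but relies on the Fr\'echet-space machinery. Both are sound, and your explicit verification that $S_K(\GP_K^{k,k-1}(\S))\subset\G_{J(K)}^k(M,t)$ via the wave-extension of $(N,\b)$ together with the support-preserving clause of Theorem \ref{thm: Uniqueness} is exactly what is needed to make the descent to quotients legitimate at each level $K$.
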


The theorem implies that the equivalence class of solutions depends continuously on the equivalence class of initial data. Since projection maps are continuous and surjective, we immediately get the following corollary.

\begin{cor}[Continuous dependence on initial data] \label{cor: ContDependence}
Let $k \in \R \cup \{\infty\}$. The linear solution map
\begin{equation*}
	\mathrm{\widetilde{Solve}^k}: \ID_c^{k,k-1}(\S) \to \bigslant{\Sol_{sc}^k(M, t)}{\G_{sc}^k(M,t)}
\end{equation*}
is continuous and surjective. 
\end{cor}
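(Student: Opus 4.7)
The plan is to construct both $\mathrm{Solve}^k$ and its inverse as the quotient-descents of continuous maps between the ambient LCTVS $\ID_c^{k,k-1}(\S)$ and $\Sol_{sc}^k(M,t)$, and then let the LF-space statement follow directly from Lemmas \ref{le: GaugeInvariantInitialData} and \ref{le: GaugeInvariantSolutions}.

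For $\mathrm{Solve}^k$ itself, I would first define a continuous linear lift $\Psi: \ID_c^{k,k-1}(\S) \to \Sol_{sc}^k(M,t)$ by assigning to $(\h, \m)$ the distinguished wave-gauge solution furnished by Theorem \ref{thm: Existence}. Linearity is built into the construction, and the finite speed of propagation \eqref{eq: FiniteSpeed} shows that $\Psi$ takes $\ID_K^{k,k-1}(\S)$ into $\Sol_{J(K)}^k(M,t)$, so $\Psi$ does land in $\Sol_{sc}^k$. Continuity on each $\ID_K^{k,k-1}$ is the content of Corollary \ref{cor: Stability} (which itself rests on well-posedness for the linear wave equation), and continuity on the strict inductive limit $\ID_c^{k,k-1}$ then follows by its universal property. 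Composing with the continuous quotient map $\Sol_{sc}^k \to \Sol_{sc}^k/\G_{sc}^k$ yields a continuous linear map $\ID_c^{k,k-1} \to \Sol_{sc}^k/\G_{sc}^k$; this map vanishes on $\GP_c^{k,k-1}$, because given $(\h_{N,\b}, \m_{N,\b})$ and any extension $V \in CH_{sc}^{k+1}(M, TM, t)$ of $N\nu + \b$ off $\S$, Theorem \ref{thm: Uniqueness} applied to $\Psi(\h_{N,\b}, \m_{N,\b}) - \L_V g$ produces a further vector field $W$ showing this difference is itself a gauge solution, hence $\Psi(\h_{N,\b}, \m_{N,\b}) \in \G_{sc}^k$. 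By the universal property of the quotient topology the induced map from $\ID_c^{k,k-1}/\GP_c^{k,k-1}$ is continuous; this is $\mathrm{Solve}^k$.

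For the inverse I would descend the restriction map $\pi_\S$. The map $\pi_\S: \Sol_{sc}^k(M,t) \to \ID_c^{k,k-1}(\S)$ is continuous (being a linear differential operator of Definition \ref{def: LinearisedFundamentalForms} followed by restriction to $\S$, with supports propagating correctly), and by definition of $\GP_c^{k,k-1}$ it carries $\G_{sc}^k$ into $\GP_c^{k,k-1}$. Composition with the quotient projection plus a second invocation of the universal property gives a continuous linear map $\Sol_{sc}^k/\G_{sc}^k \to \ID_c^{k,k-1}/\GP_c^{k,k-1}$. Checking on representatives, Theorems \ref{thm: Existence} and \ref{thm: Uniqueness} together yield $[(\h,\m)] \mapsto [\pi_\S(\Psi(\h,\m))] = [(\h,\m)]$ and $[h] \mapsto [\Psi(\pi_\S(h))] = [h]$, so this map is a two-sided inverse of $\mathrm{Solve}^k$. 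The LF-space assertion is then immediate from Lemmas \ref{le: GaugeInvariantInitialData} and \ref{le: GaugeInvariantSolutions}.

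I do not anticipate a serious obstacle. The only genuinely non-formal step is verifying that $\Psi$ sends gauge-producing initial data into gauge solutions, which requires both Theorem \ref{thm: Uniqueness} and the observation that any $(N,\b) \in H_c^{k+1}(\S, \R \oplus T\S)$ admits an extension to an element of $CH_{sc}^{k+1}(M, TM, t)$ (a routine cutoff-and-extend argument in a tubular neighbourhood of $\S$). Everything else is bookkeeping around the quotient topology and the strict inductive limit.
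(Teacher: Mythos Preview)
You have written a proof of Theorem \ref{thm: Wellposedness} (the isomorphism statement for $\mathrm{Solve}^k$), not of the stated corollary about $\widetilde{\mathrm{Solve}}^k$. In the paper the corollary is dispatched in one line: $\widetilde{\mathrm{Solve}}^k$ is the composition of the quotient projection $\ID_c^{k,k-1}(\S) \to \ID_c^{k,k-1}(\S)/\GP_c^{k,k-1}(\S)$ with the isomorphism $\mathrm{Solve}^k$ of Theorem \ref{thm: Wellposedness}; since quotient projections are continuous and surjective, so is the composite. Your second paragraph does construct this composite and establish its continuity, and surjectivity drops out of the rest of your argument, so what you wrote is correct---it simply reproves the preceding theorem along the way.

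That said, your route to Theorem \ref{thm: Wellposedness} is genuinely different from the paper's and worth noting. The paper never builds a continuous linear lift $\Psi:\ID_c^{k,k-1}(\S)\to\Sol_{sc}^k(M,t)$; instead, for each compact $K$ it observes that $\hat\pi_\S$ is a continuous bijection between Fr\'echet quotients and invokes the open mapping theorem to get continuity of $\mathrm{Solve}_K^k$ (Lemma \ref{le: FixedCompact}), then passes to the strict inductive limit. Your approach is more constructive: you exhibit $\Psi$ as the wave-gauge solution of Theorem \ref{thm: Existence}, read off its continuity from Corollary \ref{cor: Stability} (ultimately Corollary \ref{cor: cont_dep_id}), and avoid the open mapping theorem entirely. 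Two small simplifications are available to you: the fact that $\Psi$ sends $\GP_c^{k,k-1}(\S)$ into $\G_{sc}^k(M,t)$ is exactly Lemma \ref{le: SOLclosedness} (no ad hoc extension of $(N,\b)$ is needed), and the extension of $N\nu+\b$ to an element of $CH_{sc}^{k+1}$, should you want it, is more cleanly obtained by solving $\na^*\na V=0$ with Cauchy data $(N\nu+\b,0)$ than by a cutoff in a tubular neighbourhood.
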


Before proving the theorem, let us discuss some more remarks and corollaries.

\begin{remark}[Distributional initial data]
Since any compactly supported distribution is of some real Sobolev regularity, any compactly supported distributional section lies in some $H^k_c(\S)$. Therefore Theorem \ref{thm: Wellposedness} covers the case of any compactly supported distributional initial data.  
\end{remark}

A priori, the solution spaces depend on the time function. After quoting out the gauge solutions, this is not the case anymore.

\begin{cor}[Independence of the Cauchy temporal function] \label{cor: IndependenceOfTime}
Let $t$ and $\tau$ be Cauchy temporal functions on $M$. Then for every $k \in \R \cup \{\infty\}$ there is an isomorphism
\[
	\bigslant{\Sol_{sc}^k(M, t)}{\G_{sc}^k(M,t)} \to \bigslant{\Sol_{sc}^k(M, \tau )}{\G^k_{sc}(M,\tau)}
\]
which is the identity map on smooth solutions.
\end{cor}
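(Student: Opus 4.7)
The plan is to exploit the gauge freedom to pass to a harmonic-gauge representative and then use the time-function independence of wave-equation regularity. Concretely, given $[h]_t \in \bigslant{\Sol_{sc}^k(M, t)}{\G_{sc}^k(M,t)}$, I would fix the Cauchy hypersurface $\S = t^{-1}(0)$, extract the initial data $(\h,\m) := \pi_\S(h) \in \ID^{k,k-1}_c(\S)$, and invoke Theorem \ref{thm: Existence} to obtain the canonical representative $h_0 \in \Sol^k_{sc}(M,t)$ satisfying both $\Box_L h_0 = 0$ and $\na \cdot \overline{h_0} = 0$. Theorem \ref{thm: Uniqueness} guarantees $[h_0]_t = [h]_t$. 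Since $h_0$ solves a linear wave equation with initial data of Sobolev regularity $(k, k-1)$ on the spacelike Cauchy hypersurface $\S$, Theorem \ref{thm: WellposednessLinearWaves} identifies the same section $h_0$ with an element of $CH^k_{sc}(M, S^2M, \tau)$, and the finite-speed-of-propagation clause of Theorem \ref{thm: Existence} keeps the support spatially compact with respect to $\tau$ as well. Set $\Psi([h]_t) := [h_0]_\tau$.

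For well-definedness modulo gauge, let $h_0, h_0'$ be two such canonical representatives of the same class. Their difference $\L_{V'} g := h_0 - h_0' \in \G^k_{sc}(M,t)$ still satisfies $\na \cdot \overline{\L_{V'} g} = 0$, which via formula \eqref{eq: Killing_wave} and $\Ric(g) = 0$ forces the homogeneous wave equation $\na^*\na (V')^\flat = 0$. The same regularity-transfer argument together with Lemma \ref{le: regularity_lie_derivative} places $V' \in CH^{k+1}_{sc}(M, TM, \tau)$, so $\L_{V'} g \in \G^k_{sc}(M,\tau)$ and $\Psi$ descends to the quotient. Linearity and continuity of $\Psi$ follow from linearity and continuity of the extraction map $\pi_\S$ combined with continuous dependence on initial data (Corollary \ref{cor: Stability}). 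The inverse of $\Psi$ is built symmetrically by exchanging the roles of $t$ and $\tau$, and the compositions are the identity because the harmonic-gauge representative is unique modulo homogeneous wave solutions. For the identity-on-smooth-solutions statement, if $h \in C^\infty_{sc}(M, S^2M) \cap \ker(D\Ric)$ then Theorem \ref{thm: Existence} for $k = \infty$ produces a smooth $h_0$; since smooth compactly-supported gauge solutions are simultaneously in $\G^\infty_{sc}(M,t)$ and $\G^\infty_{sc}(M,\tau)$, one concludes $\Psi([h]_t) = [h_0]_\tau = [h]_\tau$ as required.

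The main obstacle is the precise claim that the canonical $h_0 \in CH^k_{sc}(M, S^2M, t)$ automatically lies in $CH^k_{sc}(M, S^2M, \tau)$. Theorem \ref{thm: WellposednessLinearWaves} is naturally formulated with the initial hypersurface a level set of the given temporal function, whereas $\S = t^{-1}(0)$ need not be a level set of $\tau$. The workaround is to apply Bernal–Sánchez to find an auxiliary Cauchy temporal function $\tilde\tau$ with $\tilde\tau^{-1}(0) = \S$, place $h_0$ in $CH^k_{sc}(M, \tilde\tau)$ via Theorem \ref{thm: WellposednessLinearWaves}, restrict to an arbitrary level set $\S_\tau = \tau^{-1}(0)$ to extract Sobolev initial data of order $(k,k-1)$ there (using the standard energy estimates that preserve Sobolev regularity of wave solutions across spacelike Cauchy hypersurfaces), and invoke Theorem \ref{thm: WellposednessLinearWaves} once more in the $\tau$-direction; uniqueness of distributional wave solutions then identifies the result with the original $h_0$. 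The same technique controls $V'$ and ensures its $\tau$-spatially-compact support, completing the construction.
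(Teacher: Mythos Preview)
Your approach is essentially the one the paper has in mind: the paper's proof is a one-line deferral to \cite{BaerWafo2014}*{Cor.~18} combined with Theorem~\ref{thm: Wellposedness}, and what you have written is precisely an unpacking of that citation --- pass to the harmonic-gauge representative $h_0$, use that $\Box_L h_0 = 0$ forces $h_0$ to lie in $CH^k_{sc}$ with respect to \emph{any} Cauchy temporal function, and check that the gauge ambiguity transfers because the relevant vector field solves $\nabla^*\nabla V' = 0$.

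One remark on your final paragraph: the step you flag as the ``main obstacle'' and then resolve by ``standard energy estimates that preserve Sobolev regularity of wave solutions across spacelike Cauchy hypersurfaces'' is not really a workaround --- it \emph{is} the content of \cite{BaerWafo2014}*{Cor.~18}. Your auxiliary temporal function $\tilde\tau$ with $\tilde\tau^{-1}(0)=\Sigma$ is just $t$ itself, so that step is vacuous; the genuine work is showing that a $CH^k_{sc}(M,t)$-solution of a wave equation restricts to $H^k\times H^{k-1}$ data on an arbitrary spacelike Cauchy hypersurface (not a $t$-level set), and this is exactly what B\"ar--Wafo prove. So rather than re-deriving it, you may simply cite it at that point, which is what the paper does. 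With that adjustment your argument is complete and matches the paper's intended proof.
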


\begin{proof}
The proof is analogous to the proof of \cite{BaerWafo2014}*{Cor. 18}, using Theorem \ref{thm: Wellposedness}.
\end{proof}

As a final observation, let us note that if $\S$ is compact, we obtain a natural Hilbert space structure on the solution space.

\begin{cor}[Hilbert space structure on the solution space]
Let $k \in \R$. In case $\S$ is compact, Theorem \ref{thm: Wellposedness} implies that 
\[
	\bigslant{\Sol^k(M, t)}{\G^k(M,t)}
\]
carries a Hilbert space structure, induced by $\mathrm{Solve}^k$. In case $k = \infty$, it is a Fréchet space.
\end{cor}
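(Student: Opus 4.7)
The plan is to transport the topological vector space structure from the initial data side to the solution side via the isomorphism $\mathrm{Solve}^k$ of Theorem \ref{thm: Wellposedness}. The key simplification when $\S$ is compact is that every section is automatically compactly supported, so the strict inductive limits appearing in Lemmas \ref{le: GaugeInvariantInitialData} and \ref{le: GaugeInvariantSolutions} collapse to a single term, and one has honest Hilbert (resp.\ Fr\'echet) spaces at one's disposal.

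First I would note that for compact $\S$, the standard theory of Sobolev spaces on closed manifolds gives that $H^k_c(\S, S^2\S) = H^k(\S, S^2\S)$ is a Hilbert space for $k \in \R$ and a Fr\'echet space for $k = \infty$. Since $D\Phi$ is a linear differential operator, it extends to a continuous linear map between the relevant Sobolev spaces, so
\[
    \ID^{k,k-1}(\S) = \bigl(H^k(\S, S^2\S) \times H^{k-1}(\S, S^2\S)\bigr)\cap \ker(D\Phi)
\]
is a closed subspace of a product of Hilbert (resp.\ Fr\'echet) spaces, and hence itself a Hilbert (resp.\ Fr\'echet) space.

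Next, Lemma \ref{le: IDclosedness} tells me that $\GP^{k,k-1}(\S) \subset \ID^{k,k-1}(\S)$ is closed. By the standard functional-analytic fact that the quotient of a Hilbert space (resp.\ Fr\'echet space) by a closed subspace is again a Hilbert space (resp.\ Fr\'echet space), the quotient
\[
    \bigslant{\ID^{k,k-1}(\S)}{\GP^{k,k-1}(\S)}
\]
carries a Hilbert space structure for $k \in \R$ and a Fr\'echet structure for $k = \infty$. In the Hilbert case, one can realise the quotient concretely as the orthogonal complement of $\GP^{k,k-1}(\S)$ inside $\ID^{k,k-1}(\S)$.

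Finally, applying Theorem \ref{thm: Wellposedness}, which guarantees that $\mathrm{Solve}^k$ is an isomorphism of locally convex topological vector spaces, I would simply pull back the Hilbert (resp.\ Fr\'echet) structure from the initial data quotient to the solution quotient. The only thing to check is that the topology on the solution quotient produced by $\mathrm{Solve}^k$ agrees with its original quotient topology, but this is automatic because $\mathrm{Solve}^k$ is a topological isomorphism. There is no real obstacle here: all the substantial analytic work has already been absorbed into Theorem \ref{thm: Wellposedness} and Lemma \ref{le: IDclosedness}, so this corollary is essentially a transport of structure along an isomorphism combined with the observation that compactness of $\S$ removes the inductive limit.
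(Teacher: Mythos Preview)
Your proposal is correct and follows essentially the same route as the paper: observe that compactness of $\S$ makes $\ID^{k,k-1}(\S)/\GP^{k,k-1}(\S)$ a Hilbert (resp.\ Fr\'echet) space via the Sobolev structure and Lemma~\ref{le: IDclosedness}, then transport this structure along the isomorphism $\mathrm{Solve}^k$ from Theorem~\ref{thm: Wellposedness}. The paper's proof additionally invokes Corollary~\ref{cor: IndependenceOfTime} to remark that the resulting structure is independent of the choice of Cauchy hypersurface $\S$, but this is not part of the stated corollary and your argument is complete without it.
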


\begin{proof}
Since $\S$ is compact, $\bigslant{\ID^{k,k-1}(\S)}{\GP^{k,k-1}(\S)}$ carries a Hilbert space induced from the Sobolev space in case $k < \infty$. 
In case $k = \infty$, it carries a natural Fréchet space induced by the Fréchet space structure on the smooth sections. 
Theorem \ref{thm: Wellposedness} and Corollary \ref{cor: IndependenceOfTime} imply that we get an induced Hilbert space structure on $\bigslant{\Sol^k(M, t)}{\G^k(M,t)}$ independent of the choice of Cauchy hypersurface $\S$.
\end{proof}

Let us turn to the proof of Theorem \ref{thm: Wellposedness}.

\begin{lemma} \label{le: FixedCompact}
Let $k \in \R \cup \{\infty\}$ and fix a compact subset $K \subset \S$. The linear map
\begin{equation*}
	\mathrm{Solve}_K^k: \bigslant{\ID_K^{k,k-1}(\S)}{\GP_K^{k,k-1}(\S)} \to \bigslant{\Sol_{J(K)}^k(M, t)}{\G_{J(K)}^k(M,t)}
\end{equation*}
is an isomorphism of topological vector spaces.
	
\end{lemma}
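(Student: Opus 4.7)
\emph{Proof plan.}  The strategy is to realise $\mathrm{Solve}_K^k$ as a continuous linear bijection between two Fréchet spaces and then to invoke the open mapping theorem.

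First I would check that both sides are Fréchet. The space $\ID_K^{k,k-1}(\S)$ is the kernel of the continuous linear differential operator $D\Phi$ acting on the Fréchet space $H^k_K(\S, S^2\S) \times H^{k-1}_K(\S, S^2\S)$, hence is closed and itself Fréchet; Lemma \ref{le: IDclosedness} identifies $\GP_K^{k,k-1}(\S)$ as a closed subspace of it, so the quotient is Fréchet. Analogously $\Sol_{J(K)}^k(M,t)$ is closed in the Fréchet space $CH_{J(K)}^k(M, S^2M, t)$, and Lemma \ref{le: SOLclosedness} makes $\G_{J(K)}^k(M,t)$ closed inside, so the codomain is Fréchet as well.

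Next I would define and analyse the map. For $(\h, \m) \in \ID_K^{k,k-1}(\S)$, Theorem \ref{thm: Existence} yields the unique solution $h$ satisfying the gauge normalisation of Lemma \ref{le: GaugeChoice}; the support bound \eqref{eq: FiniteSpeed} places $h$ in $\Sol_{J(K)}^k(M,t)$. This defines a linear lift $L : \ID_K^{k,k-1}(\S) \to \Sol_{J(K)}^k(M,t)$, and $\mathrm{Solve}_K^k$ is the map induced on quotients by $\pi \circ L$, where $\pi$ is the quotient projection. The $K$-version of Lemma \ref{le: SOLclosedness}, namely $\G_{J(K)}^k(M,t) = \pi_\S^{-1}(\GP_K^{k,k-1}(\S))$ inside $\Sol_{J(K)}^k(M,t)$, makes $\mathrm{Solve}_K^k$ well-defined: if $(\h,\m) \in \GP_K^{k,k-1}(\S)$, then $L(\h,\m)$ has initial data in $\GP_K^{k,k-1}(\S)$, hence belongs to $\G_{J(K)}^k(M,t)$. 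Injectivity is the immediate inclusion $\pi_\S(\G_{J(K)}^k(M,t)) \subset \GP_K^{k,k-1}(\S)$. For surjectivity, any $h \in \Sol_{J(K)}^k(M,t)$ has initial data supported in $J(K) \cap \S = K$ (since $\S$ is a spacelike Cauchy hypersurface containing $K$, causal curves from $K$ meet $\S$ only in $K$), so $\pi_\S(h) \in \ID_K^{k,k-1}(\S)$; then $h - L(\pi_\S(h)) \in \Sol_{J(K)}^k(M,t)$ has trivial initial data, so Theorem \ref{thm: Uniqueness} with its support clause places it in $\G_{J(K)}^k(M,t)$.

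For continuity of $\mathrm{Solve}_K^k$, Corollary \ref{cor: Stability} already says that $L$ is sequentially continuous, and this amounts to genuine continuity by combining Lemma \ref{le: GaugeChoice} (so that $(h|_\S, \na_\nu h|_\S)$ depends continuously on $(\h,\m)$) with continuous dependence on initial data for the linear wave equation $\Box_L h = 0$. Post-composition with $\pi$ then makes $\mathrm{Solve}_K^k$ continuous, and the open mapping theorem for Fréchet spaces provides continuity of the inverse.

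The main obstacle I anticipate is the careful support bookkeeping. One has to repeatedly invoke the \enquote{$\supp(V) \subset J(K)$}-clause of Theorem \ref{thm: Uniqueness} and the finite speed of propagation \eqref{eq: FiniteSpeed} to keep all gauge solutions inside $\G_{J(K)}^k(M,t)$ rather than merely $\G_{sc}^k(M,t)$; the small but essential geometric observation $J(K) \cap \S = K$ is what makes the surjectivity argument land back in $\ID_K^{k,k-1}(\S)$.
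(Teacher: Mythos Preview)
Your proof is correct and follows essentially the same strategy as the paper: verify the quotients are Fr\'echet via Lemmas \ref{le: IDclosedness} and \ref{le: SOLclosedness}, establish bijectivity from Theorems \ref{thm: Existence} and \ref{thm: Uniqueness} together with Lemma \ref{le: SOLclosedness}, and conclude via the open mapping theorem. The only difference is that the paper shows continuity of the \emph{inverse} $\hat\pi_\S$ directly (since $\pi_\S$ is manifestly continuous as a restriction/differential operator) and then deduces continuity of $\mathrm{Solve}_K^k$ from open mapping, whereas you prove continuity of $\mathrm{Solve}_K^k$ via the wave-equation solution map and apply open mapping in the opposite direction; both routes are valid.
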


\begin{proof} Lemma \ref{le: IDclosedness} and Lemma \ref{le: SOLclosedness} imply that the quotient spaces are well defined Fréchet spaces. By Theorem \ref{thm: Existence}, Theorem \ref{thm: Uniqueness} and Lemma \ref{le: SOLclosedness}, the map $\mathrm{Solve}^k_K$ is a well defined linear bijection. We prove that it indeed is an isomorphism of topological vector spaces. Recall that the map that assigns to each solution its initial data 
\[
	\pi_\S: \Sol^k_{J(K)}(M, t) \to \ID^{k,k-1}_K(\S)
\]
is continuous. By definition of the quotient space topology, $\pi_\S$ induces a continuous map 
\[
	\hat \pi_\S: \bigslant{\Sol^k_{J(K)}(M, t)}{\G^k_{J(K)}(M,t)} \to \bigslant{\ID^{k,k-1}_K(\S)}{\GP^{k,k-1}_{K}(\S)}
\]
between Fréchet spaces. Since $\hat \pi_\S$ is the inverse of $\mathrm{Solve}^k_K$, the open mapping theorem for Fréchet spaces implies the statement. 
\end{proof}

\begin{proof}[Proof of Theorem \ref{thm: Wellposedness}]
Again, Lemma \ref{le: IDclosedness} and Lemma \ref{le: SOLclosedness} imply that the quotient spaces are well defined topological vector spaces. By Theorem \ref{thm: Existence}, Theorem \ref{thm: Uniqueness} and Lemma \ref{le: SOLclosedness}, the map $\mathrm{Solve}^k$ is a well defined linear bijection. Therefore it remains to prove that it is an isomorphism of topological vector spaces. By Lemma \ref{le: FixedCompact}, the map
\begin{align*}
	\bigslant{\ID_K^{k,k-1}(\S)}{\GP_{K}^{k,k-1}(\S)} & \stackrel{\text{Solve}_K^k}{\longrightarrow} \bigslant{\Sol_{J(K)}^k(M, t)}{\G_{J(K)}^k(M,t)} \\ 
	& \hookrightarrow \bigslant{\Sol_{sc}^k(M, t)}{\G_{sc}^k(M,t)}
\end{align*}
is continuous for every compact subset $K \subset \S$. By Lemma \ref{le: GaugeInvariantInitialData}, this implies that $\text{Solve}^k$ is continuous. Similarly, by Lemma \ref{le: FixedCompact}, the composed map
\begin{align*}
	\bigslant{\Sol^k_{J(K)}(M, t)}{\G^k_{J(K)}(M,t)}&\stackrel{\hat \pi_\S}{\longrightarrow} \bigslant{\ID^{k,k-1}_K(\S)}{\GP^{k,k-1}_{K}(\S)} \\
	& \hookrightarrow \bigslant{\ID^{k,k-1}_c(\S)}{\GP^{k,k-1}_{c}(\S)}
\end{align*}
is continuous for every compact subset $K \subset \S$. By Lemma \ref{le: GaugeInvariantSolutions}, this implies that $\left(\text{Solve}^k\right)^{-1}$ is continuous. 
\end{proof}

\section{The linearised constraint equations} \label{ch: lin_constraint}

In order to apply Theorem \ref{thm: Wellposedness} in practice, it is necessary to understand the space
\[
	\bigslant{\ID_c^{k,k-1}(\S)}{\GP^{k,k-1}_{c}(\S)}.
\]
In this section, we show that this space can be quite well understood if $\S$ is compact, $\Scal(\g) = 0$ and $\tk = 0$. 
The idea is inspired by the following classical result:
If $\Ric(\g) = 0$ and $\S$ is compact, then equivalence classes of initial data are essentially in one-to-one correspondence with the divergence- and trace free tensors on $\S$ (\enquote{transverse traceless tensors} or \enquote{TT-tensors}). 
The advantage of this observation comes from the following well-known fact.
For any $(0,2)$-tensor $\a$ on $\S$, there is a unique decomposition
\begin{equation} \label{eq: decomposition}
	\a = \h + L\o + \phi \g,
\end{equation}
where $\h$ is a $TT$-tensor, $\o$ is a one-form, $L$ is the conformal Killing operator and $\phi$ is a function. 
Now, the problem is that if $\Ric(\g) \neq 0$, then TT-tensors \emph{do not} solve the linearised constraint equation in general. 
The goal of this section is to generalise the decomposition \eqref{eq: decomposition} to the case when $\Scal(\g) = 0$.
Let us therefore assume in this section that $\S$ is compact, $\Scal(\g) = 0$ and $\tk = 0$, which is obviously a solution of the non-linear constraint equations (\ref{eq: ham_constraint} - \ref{eq: momentum_constraint}). 

As mentioned in the introduction, it turns out that equation (\ref{eq: Eq1firstFF} - \ref{eq: Eq2secondFF}) will be relevant for this problem. For any $k \in \R \cup \{\infty \}$, let $\Gamma^k(\S) \subset H^k(\S, S^2\S)$ denote the $H^k$-solutions to the equations \eqref{eq: Eq1firstFF} and \eqref{eq: Eq2firstFF} and let $\Gamma^{k-1}(\S) \subset H^{k-1}(\S, S^2\S)$ denote the $H^{k-1}$-solutions to \eqref{eq: Eq1secondFF} and \eqref{eq: Eq2secondFF}.
The following proposition is a special case of Moncrief's classical splitting theorem \cite{Moncrief1975}, generalised to any Sobolev degree. It can be seen as a \enquote{gauge choice} for the initial data. 

\begin{prop} \label{prop: Moncrief_splitting}
Let $k \in \R \cup \{\infty\}$. Assume that $(\S, \g)$ is a closed manifold with vanishing scalar curvature and that $\tk = 0$. For any $k \in \R \cup \{\infty\}$, the map
\begin{align*}
	\Gamma^{k}(\S) \times \Gamma^{k-1}(\S) &\to \bigslant{\ID^{k,k-1}(\S)}{\GP^{k,k-1}(\S)}, \\
	(\h, \m) &\mapsto [(\h, \m)],
\end{align*}
is an isomorphism of Banach spaces.
\end{prop}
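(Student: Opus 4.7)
The hypotheses $\tk = 0$ and $\Scal(\g) = 0$ cause the linearised constraint operator and the gauge-producing map from Section \ref{sec: gpid} to decouple component-wise: $D\Phi_1(\h, \m)$ depends only on $\h$, $D\Phi_2(\h, \m)$ only on $\m$, and the gauge-producing initial data reduces to
\[
    \h_{N, W} = \L_W \g, \qquad \m_{N, W} = \Hess(N) - N\Ric(\g).
\]
The proposition therefore splits into two independent assertions. Both sides are Banach (Fr\'echet if $k = \infty$) by the closedness results of Section \ref{sec: gpid}, so by the open mapping theorem it suffices to produce a continuous linear bijection. Well-definedness is immediate: if $\div \h = 0$ then $D\Phi_1(\h, \m)$ collapses, after using $\tilde\na \cdot d = -\Delta$, to $-\Delta \tr_\g \h - \g(\Ric(\g), \h)$, which vanishes by \eqref{eq: Eq1firstFF}; and \eqref{eq: Eq2secondFF} is literally $D\Phi_2(\h, \m) = 0$.

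For \emph{injectivity}, suppose $\L_W \g \in \Gamma^k$ and $\m := \Hess(N) - N\Ric(\g) \in \Gamma^{k-1}$. Integration by parts gives
\[
\int_\S |\L_W \g|_\g^2\, d\mu_\g = -2\int_\S \g\bigl(\div(\L_W \g),\, W^\flat\bigr)\, d\mu_\g,
\]
which vanishes by \eqref{eq: Eq2firstFF}, so $\L_W \g = 0$. Similarly,
\[
\int_\S |\m|_\g^2\, d\mu_\g = \int_\S N\bigl[\div\div(\m) - \g(\Ric(\g), \m)\bigr]\, d\mu_\g,
\]
and from \eqref{eq: Eq2secondFF} one has $\div \m = d \tr_\g \m$, hence $\div\div \m = -\Delta \tr_\g \m$, which by \eqref{eq: Eq1secondFF} equals $\g(\Ric(\g), \m)$; the integrand vanishes and so $\m = 0$.

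For \emph{surjectivity}, given $(\a, \b) \in \ID^{k,k-1}(\S)$ I would construct $W$ and $N$ by two independent elliptic Fredholm problems. For the $\h$-part: the operator $W \mapsto \L_W \g$ has formal $L^2$-adjoint $T \mapsto -2(\div T)^\sharp$, so the composition is a self-adjoint Laplace-type operator on vector fields. A direct symbol computation shows its principal symbol $W \mapsto -2(|\xi|^2 W + \xi(W)\xi^\sharp)$ is invertible, and its kernel is the finite-dimensional space of Killing vector fields. Solvability of $-2(\div \L_W \g)^\sharp = -2(\div \a)^\sharp$ reduces, by the Fredholm alternative, to $\int_\S \g(\a, \L_K \g)\, d\mu_\g = 0$ for Killing $K$, which is immediate from integration by parts. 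Hence $W \in H^{k+1}(\S)$ exists; setting $\h := \a - \L_W \g$ gives $\div \h = 0$, and gauge invariance of $D\Phi_1$ combined with $\a \in \ID$ places $\h$ in $\Gamma^k$. For the $\m$-part, introduce the formally self-adjoint fourth order elliptic operator
\[
\mathcal{M} N := \Delta^2 N - \g(\Ric(\g), \Hess(N)) + N |\Ric(\g)|_\g^2,
\]
whose principal symbol $|\xi|^4 \,\mathrm{id}$ is invertible. Its kernel is $\{N : \Hess(N) = N\Ric(\g)\}$: any $N \in \ker \mathcal{M}$ produces $\Hess(N) - N\Ric(\g) \in \Gamma^{k-1} \cap \GP^{k-1}$, which is trivial by the injectivity already established. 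Solvability of $\mathcal{M} N = -\Delta \tr_\g \b - \g(\Ric(\g), \b)$ is then verified by pairing with $N_0 \in \ker \mathcal{M}$: the identities $\Hess(N_0) = N_0 \Ric(\g)$ and $\Delta N_0 = -\tr_\g \Hess(N_0) = -N_0 \Scal(\g) = 0$, together with $\div \b = d \tr_\g \b$ from $D\Phi_2(\b) = 0$, make the pairing vanish after repeated integration by parts. Elliptic regularity yields $N \in H^{k+1}(\S)$, and $\m := \b - \Hess(N) + N\Ric(\g) \in \Gamma^{k-1}$ by construction.

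The hardest step will be the $\m$-component of surjectivity: verifying the ellipticity and self-adjointness of $\mathcal{M}$, identifying its kernel via the injectivity step, and checking the Fredholm orthogonality condition. The latter uses both the constraint $D\Phi_2(\b) = 0$ and the hypothesis $\Scal(\g) = 0$ through the contracted Bianchi identity $\div \Ric(\g) = \tfrac{1}{2} d \Scal(\g) = 0$, which allows the key commutators of covariant derivatives to close up under the integrations by parts.
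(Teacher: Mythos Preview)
Your argument is correct and takes a genuinely different route from the paper's. The paper keeps the gauge map $P(\b,N) = (\L_\b\g,\ \Hess(N)-N\Ric(\g))$ as a single operator and proves the abstract decomposition $H^k\times H^{k-1} = \im(P)\oplus\ker(P^*)$ by first invoking the $L^2$ orthogonal split and density for $k\le 0$, then pushing to $k>0$ via elliptic regularity for the mixed-order operator $\begin{pmatrix}\tilde\nabla^*\tilde\nabla & 0\\ 0 & 1\end{pmatrix}\circ P^*P$; the identification $\Gamma^k\times\Gamma^{k-1} = \ID^{k,k-1}\cap\ker(P^*)$ then finishes. You instead exploit the decoupling from the outset and run two separate Fredholm problems: the standard second-order $\L^*\L$ for the $\h$-component, and your fourth-order $\mathcal M$ for the $\m$-component. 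This is more explicit and avoids the mixed-order nuisance, at the price of having to compute the kernel and self-adjointness of $\mathcal M$ by hand; the paper's route is shorter once one accepts the abstract $\im\oplus\ker$ machinery, and makes the $L^2$-adjoint structure (Moncrief's original point) transparent.

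Two small remarks. First, your injectivity integrations by parts are only literally valid once you know $W$ and $N$ are smooth; this follows from $\L^*\L W = 0$ and $\mathcal M N = 0$ by elliptic regularity, but you should say so for $k<0$. Second, your Fredholm check for $\mathcal M$ is easier than you indicate: any $N_0\in\ker\mathcal M$ satisfies $\Delta N_0=0$, hence is constant, and then $N_0\Ric(\g)=\Hess(N_0)=0$, so $\int N_0\,\g(\Ric(\g),\b)=0$ directly without invoking $D\Phi_2(\b)=0$.
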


Since the proof for arbitrary Sobolev degree is hard to find, we give a simple proof of this proposition later. By Theorem \ref{thm: Wellposedness}, we conclude that the composed map
\begin{align*}
	\Gamma^{k}(\S) \times \Gamma^{k-1}(\S) \to \bigslant{\ID^{k,k-1}(\S)}{\GP^{k,k-1}(\S)} \stackrel{Solve^k}{\to} \bigslant{\Sol^k(M, t)}{\G^k(M,t)},
\end{align*}
is an isomorphism of Banach spaces. 

Let us now state the main result of this section. Let 
\[
	L\o := \L_{\o^\sharp} \g - \frac{2}{\dim(\S)} (\tilde \n \cdot \o) \g
\]
denote the conformal Killing operator on one-forms.
\begin{thm}\label{thm: initial_data_split}
Assume that $(\S, \g)$ is a closed Riemannian manifold of dimension $n \geq 2$  with $\Scal(\g) = 0$ and $\tk = 0$. Let $k \in \R \cup \{\infty\}$. Then for each $(\a, \b) \in H^{k}(\S, S^2\S) \times H^{k-1}(\S, S^2\S)$, there is a unique decomposition
\begin{align*}
	\a &= \h + L \o + C \Ric(\g) + \phi \g, \\
	\b &= \m + L \eta + C' \Ric(\g) + \psi \g,
\end{align*}
where $(\h, \m) \in \Gamma^{k}(\S) \times \Gamma^{k-1}(\S)$, $(\o, \eta) \in H^{k+1}(\S, T^*\S) \times H^{k}(\S, T^*\S)$, $(C, C') \in \R^2$ and $(\phi, \psi) \in H^{k}(\S, \R) \times H^{k-1}(\S, \R)$ such that $\phi[1] = 0 = \psi[1]$.
\end{thm}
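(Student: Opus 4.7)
The two decompositions of $\a$ and $\b$ are independent (each constraint involves only $\h$ or only $\m$), so I would carry out the proof for $\a$; the $\b$-version is analogous, with the obvious sign and $(\tr_\g \m)\g$-adjustments coming from \eqref{eq: Eq1secondFF}--\eqref{eq: Eq2secondFF}. The plan is to first extract $C$ by an $L^2$ inner product, then to recognize the remaining decomposition problem as a Douglis--Nirenberg elliptic system for $(\o,\phi)$ and invert it via Fredholm theory.

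For any $\h \in \Gamma^k(\S)$, integrating \eqref{eq: Eq1firstFF} gives $\int_\S \g(\Ric(\g),\h)\,d\mu_\g = \int_\S \Delta\tr_\g\h\,d\mu_\g = 0$; the contracted Bianchi identity together with $\Scal(\g)=0$ gives $\div \Ric(\g)=0$, so integration by parts yields $\int_\S \g(\Ric(\g),L\o)\,d\mu_\g = 0$; and $\int_\S \g(\Ric(\g),\phi\g)\,d\mu_\g = \int_\S \phi\,\Scal(\g)\,d\mu_\g = 0$. Pairing the desired decomposition with $\Ric(\g)$ in $L^2$ therefore forces $C\,\|\Ric(\g)\|_{L^2}^2 = \langle \a, \Ric(\g)\rangle_{L^2}$, uniquely determining $C$ when $\Ric(\g)\not\equiv 0$ (the term $C\,\Ric(\g)$ vanishes identically in the flat case, so one takes $C=0$ there).

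Setting $\a' := \a - C\Ric(\g)$, the task reduces to finding $\o \in H^{k+1}(\S,T^*\S)$ and $\phi \in H^k(\S)$ with $\int_\S \phi\,d\mu_\g = 0$ such that $\h := \a' - L\o - \phi\g$ lies in $\Gamma^k(\S)$. Unfolding \eqref{eq: Eq1firstFF}--\eqref{eq: Eq2firstFF} and using $\tr_\g L\o = 0$, $\Scal(\g)=0$ and $\div\Ric(\g)=0$, this is equivalent to the coupled system
\begin{align*}
	\div L\o + d\phi &= \div \a', \\
	n\Delta\phi - \g(\Ric(\g), L\o) &= \Delta\tr_\g\a' - \g(\Ric(\g), \a').
\end{align*}
This is a Douglis--Nirenberg elliptic system between $H^{k+1}(T^*\S) \times H^k(\S)$ and $H^{k-1}(T^*\S) \times H^{k-2}(\S)$: its diagonal principal symbol consists of $\sigma_{\div L}(\xi) = |\xi|^2 \mathrm{Id} + (1 - \tfrac{2}{n})\xi\otimes\xi$, invertible for $\xi\neq 0$ when $n\geq 2$ (so $\div L$ is formally self-adjoint elliptic with kernel equal to the conformal Killings), and $n\sigma_\Delta(\xi)$. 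Hence the system is Fredholm. The choice of $C$ makes the integral of the right-hand side of the second equation vanish, matching the left-hand side.

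The main obstacle is to show that, modulo the gauge freedom $\o \in \ker L$ and $\phi \equiv \textrm{const}$, the homogeneous version of the system has trivial kernel; this simultaneously gives uniqueness of the full decomposition and, via the Fredholm alternative for the index-zero adjoint, the surjectivity needed for existence. The key observation is that if $(\o,\phi)$ satisfies the homogeneous system then $\mathbf{h} := L\o + \phi\g$ itself lies in $\Gamma^k(\S)$: the first equation gives $\div\mathbf{h} = 0$, and the second (together with $\Scal(\g)=0$) gives $\Delta\tr_\g\mathbf{h} = \g(\Ric(\g),\mathbf{h})$. Uniqueness therefore reduces to the intersection statement
\[
	\Gamma^k(\S) \cap \bigl\{ L\o + \phi\g : \textstyle\int_\S \phi\, d\mu_\g = 0 \bigr\} = \{0\}.
\]
Proving this intersection statement is the technical heart of the argument. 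One combines the $L^2$-identity $\|L\o\|_{L^2}^2 = -2\int_\S \phi\,\div\o\,d\mu_\g$ (from pairing the first homogeneous equation with $\o$) with $n\|d\phi\|_{L^2}^2 = -\int_\S \phi\,\g(\Ric(\g),L\o)\,d\mu_\g$ (from pairing the second with $\phi$), the Poincar\'e-type estimate $\|\o\|_{L^2} \leq C\,\|L\o\|_{L^2}$ valid on the $L^2$-orthogonal complement of $\ker L$, and elliptic regularity for $\div L$ and $\Delta$. Once triviality of the kernel is established, $\h := \a' - L\o - \phi\g$ belongs to $\Gamma^k$ by construction, and the Banach-space isomorphism claimed by the theorem follows from the Fredholm structure.
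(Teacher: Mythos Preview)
Your overall architecture matches the paper's: fix $C$ by the $L^2$-pairing with $\Ric(\g)$, reduce to an elliptic system in $(\phi,\o)$, and identify the decisive step as the kernel computation for the homogeneous system. The system you write down is exactly the paper's operator $P(\phi,\o)=(\Delta\phi + a\,\g(\Ric,L\o),\ L^*L\o + b\,d\phi)$ with $(a,b)=(-\tfrac1n,-2)$ for the $\a$-case and $(a,b)=(\tfrac1n,2(n-1))$ for the $\b$-case.

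The gap is at the point you yourself flag as the ``technical heart''. The two $L^2$-pairings you propose, together with a Poincar\'e-type bound $\|\o\|\le C\|L\o\|$, do not close. From $\|L\o\|^2=-2\int\phi\,\div\o$ you only get $\|L\o\|^2\le 2\|\phi\|\,\|\div\o\|$, and $\|\div\o\|$ is \emph{not} controlled by $\|L\o\|$ (the conformal Killing operator sees only the trace-free symmetric part of $\nabla\o$). Your second identity involves $\g(\Ric,L\o)$, which has no sign. No combination of these with Poincar\'e yields $L\o=0$. (There is also a sign slip: pairing $n\Delta\phi=\g(\Ric,L\o)$ with $\phi$ gives $n\|d\phi\|^2=+\int\phi\,\g(\Ric,L\o)$, not $-$.)

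The paper's actual mechanism is different and sharper: apply $\delta$ to $L^*L\o + b\,d\phi=0$, use the identity $\delta(L^*L\o)=(4-\tfrac4n)\Delta\delta\o+2\g(\Ric,L\o)$ together with the first equation to eliminate the curvature term, and deduce that $(4-\tfrac4n)\delta\o+(b-\tfrac2a)\phi$ is harmonic, hence constant. Substituting back gives $L^*L\o=\dfrac{4-\tfrac4n}{1-\tfrac2{ab}}\,d\delta\o$, and since $0<ab<2$ the coefficient is nonpositive, so $\|L\o\|^2\le 0$. This is where the dimension hypothesis $n\ge2$ (via $0<ab<2$) genuinely enters; your sketch never uses it.

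Finally, for existence you appeal to ``the Fredholm alternative for the index-zero adjoint'' without saying why the index is zero. The paper does not rely on an index argument: the same computation is run for $P^*$ (with $(a,b)$ replaced by $(b,a)$, still satisfying $0<ab<2$) to show $\ker P^*=\ker(d)\oplus\ker(\L)$, and then the Fredholm alternative $H^k=\im P\oplus\ker P^*$ gives surjectivity onto the correct codimension-one-plus-Killing complement. You should either carry out that adjoint computation or give an honest index argument.
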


Here $\phi[1]$ means the distribution $\phi$ evaluated on the test function with value $1$. If $\phi \in L^1_{loc}$ then $\phi[1] = \int_\S \phi d\mu_{\g}$.

\begin{example}
Let us give two examples of closed Riemannian manifolds with vanishing scalar curvature. 
\begin{itemize}
\item For each $n \in \N$, the flat torus $T^n := \R^n / \Z^n$ is flat, in particular $\Scal(\g) = 0$.
\item For each $m \in \N$, there is a Berger metric on $S^{4m-1}$ with vanishing scalar curvature. In case $m = 1$, the scalar flat Berger metric is given by $\frac52 {\sigma_1}^2 + {\sigma_2}^2 + {\sigma_3}^2$, where $\sigma_1, \sigma_2, \sigma_3$ are orthonormal left invariant one-forms on $S^3$. Note that this metric does not have vanishing Ricci curvature.
\end{itemize}
On these manifolds, Theorem \ref{thm: initial_data_split} applies.
\end{example}

\begin{remark}
Note that Theorem \ref{thm: initial_data_split} is equivalent to showing that
\begin{align*}
	H^{k}(\S, S^2\S) &= \Gamma_1^{k}(\S) \oplus \im(L) \oplus \R \Ric(\g) \oplus \widehat H^k(\S, \R) \g, \\
	H^{k}(\S, S^2\S) &= \Gamma_2^{k}(\S) \oplus \im(L) \oplus \R \Ric(\g) \oplus \widehat H^k(\S, \R) \g, 
\end{align*}
where $L: H^{k+1}(\S, T^*\S) \to H^{k}(\S, S^2\S)$ and $\widehat H^k(\S, \R) := \{ \phi \in H^k(\S, \R) \mid \phi[1] = 0 \}$. 
\end{remark}

\begin{remark}[TT-tensors]
Note that if $\Ric(\g) = 0$, equations (\ref{eq: Eq1firstFF} - \ref{eq: Eq2secondFF}) imply that $\h$ and $\m$ are TT-tensors or a constant multiple of the metric. Moreover, if $\Ric(\g) = 0$, then Theorem \ref{thm: initial_data_split} simplifies essentially to the classical split mentioned in the beginning of this section.
\end{remark}

Before proving Proposition \ref{prop: Moncrief_splitting} and our main result Theorem \ref{thm: initial_data_split}, let us use Proposition \ref{prop: Moncrief_splitting} to show that there are arbitrarily irregular non-gauge gravitational waves. 

\begin{example}[Arbitrarily irregular non-gauge solutions] \label{ex: arbitrarily irregular}
Consider the flat torus $((S^1)^3, \g)$ with coordinates $(x^1, x^2, x^3)$. Let $\delta^{(n)}$ denote the $n$-th derivative of the Dirac distribution on $S^1$ with support at some fixed point in $S^1$. The tensor defined by
\[
	\h(x^1, x^2, x^3) := \delta^{(n)}(x^3)dx^1 \otimes dx^2
\]
is a TT-tensor. Moreover, $\h \in H^{-n-1}(\S) \backslash H^{-n}(\S)$. Combining Proposition \ref{prop: Moncrief_splitting} with Theorem \ref{thm: Wellposedness}, this shows that there are arbitrarily irregular non-gauge gravitational waves on the spatially compact Minkowski spacetime $M = \R \times (S^1)^3$.
\end{example}

We start by giving a simple proof of Proposition \ref{prop: Moncrief_splitting}. The proof is more elementary than the original one by Moncrief, since we only consider the case when $\tk = 0$.

\begin{proof}[Proof of Proposition \ref{prop: Moncrief_splitting}]
Note first that $(\h, \m) \in \ID^{k,k-1}(\S)$ if and only if
\begin{align*}
	\tilde \na \cdot (\tilde \na \cdot \h - d \tr_\g \h) - \g(\Ric(\g), \h) &= 0, \\
	\tilde \na \cdot (\m - (\tr_\g\m)\g) &= 0.
\end{align*}
The gauge producing initial data $\GP^{k,k-1}(\S)$ is in this case given by the image of
\begin{align*}
	P: H^{k+1}(\S, T\S \oplus \R) &\to H^k(\S, S^2\S) \times H^{k-1}(\S, S^2\S), \\
	(\b, N) &\mapsto (\L_\b \g, \Hess(N) - \Ric(\g) N).
\end{align*}
The formal adjoint of $P$ is given by
\[
	P^*(\h, \m) = (- 2 \tilde \n \cdot \h, \tilde \na \cdot \tilde \na \cdot \m - \g(\Ric(\g), \m)).
\]
Recall by Lemma \ref{le: IDclosedness}, that we know that $\im(P) = \GP^{k,k-1}(\S) \subset \ID^{k,k-1}(\S)$ is closed. We claim that 
\begin{equation} \label{eq: part_fredholm}
	H^k(\S, S^2\S) \oplus H^{k-1}(\S, S^2\S) = \im(P) \oplus \ker(P^*).
\end{equation}
We first prove this when $k \leq 0$. Define 
\begin{align*}
	P_0: H^{1} (\S, T\S) \times H^{2}(\S, \R) &\to L^2(\S, S^2\S \oplus S^2\S), \\
	(\b, N) &\mapsto P(\b, N).
\end{align*}
It follows that
\begin{align*}
	L^2(\S, S^2\S \oplus S^2\S) &= \overline{\im(P_0)} \oplus \ker(P_0^*) \\
		&\subset \im(P) \oplus \ker(P^*) \\
		&\subset H^{k}(\S, S^2\S) \oplus H^{k-1}(\S, S^2\S).
\end{align*}
Since $\im(P) \oplus \ker(P^*) \subset H^{k}(\S, S^2\S) \oplus H^{k-1}(\S, S^2\S)$ is closed and $L^2(\S, S^2\S \oplus S^2\S) \subset H^{k}(\S, S^2\S) \oplus H^{k-1}(\S, S^2\S)$ is dense, we have proven \eqref{eq: part_fredholm} when $k \leq 0$. Assume now that $k > 0$ and that $(\h, \m) \in H^{k}(\S) \times H^{k-1}(\S)$. Since we know equation \eqref{eq: part_fredholm} when $k = 0$, we conclude that there is $(N, \b) \in H^{1}(\S)$ and $(\h_0, \m_0) \in L^2(\S) \times H^{-1}(\S)$ such that $P^*(\h_0, \m_0) = 0$ and
\[
	(\h, \m) = P(N, \b) + (\h_0, \m_0).
\]
It follows that $P^*P(N, \b) = P^*(\h, \m) \in H^{k-1}(\S) \times H^{k-3}(\S)$. Note that
\[
	\begin{pmatrix}
	\tilde \n^* \tilde \n & 0 \\
	0 & 1
	\end{pmatrix} \circ P^*P: H^{k+1}(\S) \to H^{k-3}(\S)
\]
is an elliptic differential operator. It follows that $(N, \b) \in H^{k+1}(\S)$ and hence $(\h_0, \m_0) = (\h, \m) - P(N, \b) \in H^{k}(\S) \times H^{k-1}(\S)$. This proves the claim for $k > 0$.

Since $\im(P) = \GP^{k,k-1}(\S) \subset \ID^{k,k-1}(\S)$, it follows now that
\[
	\ID^{k,k-1}(\S) = \GP^{k,k-1}(\S) \oplus \left( \ID^{k,k-1}(\S) \cap \ker(P^*) \right).
\]
One checks that $\Gamma^{k}(\S) \times \Gamma^{k-1}(\S) = \ID^{k,k-1}(\S) \cap \ker(P^*)$.  This concludes the proof.
\end{proof}

Let us turn to the proof of Theorem \ref{thm: initial_data_split}. Note that $\h = \a - L \o - C \Ric(\g) - \phi \g \in \Gamma_1^k(\S)$ if and only if
\begin{align*}
	\Delta \phi - \frac1n \g(\Ric(\g), L\o) &= -\frac1n \g(\Ric(\g), \a) + \frac1n \Delta \tr_\g \a + \frac C n \g(\Ric(\g), \Ric(\g)), \\
	L^*L\o - 2 d\phi &= - 2 \tilde \n \cdot \a
\end{align*}
and $\m = \b - L \eta - C' \Ric(\g) - \psi \g \in \Gamma_2^{k-1}(\S)$ if and only if
\begin{align*}
	\Delta \psi + \frac1n \g(\Ric(\g), L\eta) &= \frac1n \g(\Ric(\g), \b) + \frac1n \Delta \tr_\g \b - \frac {C'} n \g(\Ric(\g), \Ric(\g)), \\
	L^*L\eta + 2(n-1)d\psi &= - 2\tilde \na \cdot (\b - (\tr_\g \b)\g),
\end{align*}
using that $\tilde \n \cdot \Ric(\g) = \frac12 d \Scal(\g) = 0$ and $\tr_\g \Ric(\g) = \Scal(\g) = 0$. The idea is to consider the right hand side as given and find $(\phi, \o)$ and $(\psi, \eta)$ solving the equations. 
The idea is to consider the left hand side as an elliptic operator and calculate its kernel and cokernel.

Let $\L \o := \L_{\o^\sharp}\g$ denote the Killing operator on one-forms $\o$.

\begin{lemma} \label{le: almost bijectivity of P}
Assume that $(\S, \g)$ is a closed Riemannian manifold of dimension $n \geq 2$ such that $\Scal(\g) = 0$. Let $a, b \in \R$ such that $0 < ab < 2$. For any $k \in \R \cup \{\infty\}$, consider the elliptic differential operator
\begin{align*}
	&P: H^{k+2}(\S, \R \oplus T^*\S) \to H^k(\S, \R \oplus T^*\S), \\
	&P(\phi, \o) := \begin{pmatrix}
						\Delta \phi + a \g(\Ric(\g), L\o)) \\
						L^*L \o + b d\phi
					\end{pmatrix}.
\end{align*}
Then
\begin{align*}
	\ker(P) = \ker(P^*) = \ker(d) \oplus \ker(\L),
\end{align*}
i.e. both kernels consist only of the constant functions and Killing one-forms.
\end{lemma}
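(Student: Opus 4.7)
The plan is to prove $\ker(P) = \ker(d) \oplus \ker(\L)$ by establishing both inclusions separately, and then to argue that the same identity holds for $\ker(P^*)$ by a parallel computation after writing down the explicit form of the adjoint.

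First I would verify the easy inclusion $\ker(d) \oplus \ker(\L) \subseteq \ker(P) \cap \ker(P^*)$ by direct substitution. If $\phi$ is constant then $d\phi = 0$ and $\Delta \phi = 0$. If $\omega$ is a Killing one-form, then tracing $\L\omega = 0$ against $\g$ yields $\tilde{\nabla} \cdot \omega = \tfrac{1}{2}\tr_{\g}(\L\omega) = 0$, so $L\omega = \L\omega - \tfrac{2}{n}(\tilde{\nabla} \cdot \omega)\g = 0$ and therefore $L^{*}L\omega = 0$. A short calculation using the identity
\[
L^{*}(\phi\, \Ric(\g)) \;=\; -2\, \Ric(\g)(d\phi, \cdot),
\]
which follows from the assumption $\Scal(\g) = 0$ and the contracted Bianchi identity $\tilde{\nabla} \cdot \Ric(\g) = \tfrac{1}{2}d\Scal(\g) = 0$, then shows that all cross-terms in $P$ and $P^{*}$ vanish on such pairs.

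Next I would tackle the reverse inclusion. Assume $P(\phi, \omega) = 0$, and test the two components with $\phi$ and $\omega$ respectively in $L^{2}(\S)$. Integrating by parts and using the identity above to simplify the Ricci cross-term yields the coupled energy identities
\[
\|d\phi\|_{L^{2}}^{2} \;=\; 2a\int_{\S} \Ric(\g)(d\phi, \omega)\, d\mu_{\g}, \qquad \|L\omega\|_{L^{2}}^{2} \;=\; -b\int_{\S} \g(\omega, d\phi)\, d\mu_{\g}.
\]
To couple these two identities I would exploit the pointwise algebraic identity
\[
d\phi \otimes \omega + \omega \otimes d\phi - \tfrac{2}{n}\, \g(d\phi, \omega)\, \g \;=\; L(\phi\, \omega^{\sharp}) \;-\; \phi\, L\omega,
\]
together with $L^{*}\Ric(\g) = 0$ (again a consequence of $\Scal(\g) = 0$ and Bianchi), so that the Ricci coupling is re-expressed purely in terms of $L(\phi\omega^{\sharp})$ and $\phi L\omega$. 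The decisive step is then a sharp Cauchy--Schwarz-type estimate combining the two displayed identities, in which the threshold $0 < ab < 2$ enters precisely to force $\|d\phi\|_{L^{2}}^{2} + \|L\omega\|_{L^{2}}^{2} \leq 0$, so that $d\phi = 0$ and $L\omega = 0$.

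Having shown $L\omega = 0$, the vector field $\omega^{\sharp}$ is conformal Killing: $\L_{\omega^{\sharp}}\g = f\g$ with $f = \tfrac{2}{n}\tilde{\nabla} \cdot \omega$. At this point I would invoke the standard fact that on a closed Riemannian manifold with $\Scal(\g) = 0$ every conformal Killing field is automatically Killing. The argument is to linearise the scalar curvature along the flow of $\omega^{\sharp}$: a direct computation gives $D\Scal|_{\g}(f\g) = -(n+1)\Delta f$, which must equal $\omega^{\sharp} \cdot \Scal(\g) = 0$. Hence $\Delta f = 0$, so $f$ is constant on the closed manifold, and $\int_{\S} f\, d\mu_{\g} = \tfrac{2}{n}\int_{\S} \tilde{\nabla} \cdot \omega\, d\mu_{\g} = 0$ forces $f = 0$. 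Consequently $\L\omega = 0$ and $(\phi, \omega) \in \ker(d) \oplus \ker(\L)$. The case of $\ker(P^{*})$ is parallel: a short integration-by-parts computation gives the explicit form
\[
P^{*}(\psi, \eta) \;=\; \bigl(\Delta \psi - b\, \tilde{\nabla} \cdot \eta,\; -2a\, \Ric(\g)(d\psi, \cdot) + L^{*}L\eta\bigr),
\]
and exactly the same pairing strategy yields the analogous energy identities and the same conclusion. The hard part will be pinning down the sharp Cauchy--Schwarz-type inequality in which the constant $2$ in the hypothesis $0 < ab < 2$ appears naturally; once this is in hand, the rest reduces to routine integration by parts and the standard conformal-Killing-is-Killing argument on scalar-flat closed manifolds.
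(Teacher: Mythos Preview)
Your energy-pairing strategy has a genuine gap at the ``decisive step.'' After testing the two components of $P(\phi,\omega)=0$ against $\phi$ and $\omega$ you obtain
\[
\|d\phi\|_{L^2}^2 = 2a\!\int_\S \Ric(\g)(d\phi,\omega)\,d\mu_{\g},
\qquad
\|L\omega\|_{L^2}^2 = -b\!\int_\S \g(d\phi,\omega)\,d\mu_{\g},
\]
but the two cross-terms on the right are \emph{different} bilinear pairings: one involves $\Ric(\g)$ and the other does not. There is no Cauchy--Schwarz inequality relating $\int \Ric(\g)(d\phi,\omega)$ to $\int \g(d\phi,\omega)$ with a universal constant, so the product $ab$ never appears and the threshold $ab<2$ cannot be reached this way. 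Your algebraic identity for $L(\phi\omega^\sharp)$ only rewrites $\int\Ric(d\phi,\omega)$ back as $-\tfrac12\int\phi\,\g(\Ric,L\omega)$, which is where you started; it does not produce a comparison with $\int\g(d\phi,\omega)$.

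The paper's argument avoids this obstacle by working pointwise rather than via energy pairings. One applies $\delta$ to the second component $L^*L\omega + b\,d\phi = 0$; using the Weitzenb\"ock-type formula $L^*L\omega = 2\Delta\omega - 4\Ric(\g)(\omega^\sharp,\cdot) + (2-\tfrac4n)d\delta\omega$ together with $\tilde\nabla\cdot\Ric(\g)=0$ yields $\delta(L^*L\omega) = (4-\tfrac4n)\Delta\delta\omega + 2\,\g(\Ric(\g),L\omega)$. The first component then lets you replace $\g(\Ric(\g),L\omega)$ by $-\tfrac1a\Delta\phi$, giving
\[
\Delta\!\left((4-\tfrac4n)\delta\omega + (b-\tfrac2a)\phi\right)=0,
\]
so this combination is constant on the closed manifold. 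Substituting back one obtains the \emph{exact} identity $L^*L\omega = \dfrac{4-\tfrac4n}{1-\tfrac{2}{ab}}\,d\delta\omega$, and now a single pairing with $\omega$ gives $\|L\omega\|^2 = \dfrac{4-\tfrac4n}{1-\tfrac{2}{ab}}\|\delta\omega\|^2 \le 0$ precisely when $0<ab<2$. This yields $L\omega=0$ \emph{and} $\delta\omega=0$ simultaneously, so your separate conformal-Killing-implies-Killing argument becomes unnecessary. The computation for $\ker(P^*)$ follows a similar (though not identical) manipulation, rewriting $\Ric(\g)(\grad\phi,\cdot)$ via the same Weitzenb\"ock formula applied to $d\phi$ and pairing with $\omega+\tfrac a2 d\phi$.
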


In our case, we have that $(a, b) = (-\frac 1n, -2)$, which implies that $ab = \frac{2}{n}$ and secondly that $(a, b) = (\frac1n, 2(n-1))$, which implies that $ab = \frac{2(n-1)}{n}$. 
In both cases $0 < ab < 2$, for all $n \geq 2$, so the lemma applies.

We will use the following differential operators acting on one-forms $\o$ and functions $\phi$ on $\S$:
\begin{align*}
	\delta \o :=& - \na \cdot \o, \\
	\Delta \o :=& (d\delta + \delta d) \omega, \\
	\Delta \phi :=& (d\delta + \delta d)\phi = \delta d \phi.
\end{align*}

\begin{proof}[Proof of Lemma \ref{le: almost bijectivity of P}]
Let us first note that
\begin{equation} \label{eq: L*L}
	L^*L \o = 2 \Delta \o - 4 \Ric(\g)(\o^\sharp) + \left( 2 - \frac4n \right)d\delta \o,
\end{equation}
for any one form $\o$.
We start by showing that $\ker(P) = \ker(d) \oplus \ker(\L)$. For this, assume that 
\[
	P(\phi, \o) = 0.
\]
It follows that $\delta(L^*L \o) = - b \Delta \phi$. On the other hand, using $\tilde \n \cdot \Ric_{\g} = \frac12 d\Scal(\g) = 0$ it follows that
\begin{align*}
	\delta(L^*L \o) &= \left(4-\frac4n \right) \Delta \delta \o + 2 \g(\Ric(\g), L\o) \\
		&= \left(4-\frac4n \right) \Delta \delta \o - \frac{2}{a} \Delta \phi,
\end{align*}
where in the last line we have used that $P(\phi, \o) = 0$. Combining these two results gives
\[
	\Delta \left(\left(4- \frac4n \right)\delta \o +\left(b - \frac2a \right)\phi\right) = 0.
\]
Since $\S$ is closed, all harmonic functions are constant and hence
\[
	\phi = \frac{4 - \frac4n}{\frac2a - b}\delta \o + C,
\]
where $C$ is constant, which implies that
\[
	L^*L \o = - b d \phi = \frac{4 - \frac4n}{1 - \frac2{ab}} d\delta \o.
\]
Since $0 < ab < 2$, it follows that 
\[
	\norm{L\o}_{L^2}^2 = \frac{4 - \frac4n}{1 - \frac2{ab}} \norm{\delta \o}_{L^2}^2 \leq 0.
\]
We conclude that $L\o = 0$ and $\delta \o = 0$. Hence $\o$ is a Killing one-form. It follows that $d \phi = 0$ as claimed.

We continue by calculating $\ker (P^*)$. From equation \eqref{eq: L*L}, we get
\[
	- 2a\Ric(\g)(\grad(\phi), \cdot) = -2a \left( 1 - \frac1 n \right) d \Delta \phi + \frac a 2 L^*L d\phi.
\]
Assuming that $P^*(\phi, \o) = 0$, it follows that
\[
	L^*L\left(\o + \frac a2 d\phi \right) = 2a\left( 1 - \frac 1n \right)d\Delta \phi.
\]
Again using $P^*(\phi, \o) = 0$ we conclude that
\begin{align*}
	\norm{L\left(\o + \frac a2 d\phi \right)}^2 &= 2a\left( 1 - \frac 1n \right) \langle d \Delta \phi, \o + \frac a 2 d\phi \rangle \\
	&= a^2 \left( 1 - \frac 2 {ab} \right)\left( 1 - \frac 1n \right) \norm{\Delta \phi}^2 \\
	&\leq 0,
\end{align*}
since $0 < ab < 2$, which implies that
\[
	L\left(\o + \frac a2 d\phi \right) = 0
\]
and hence $\Delta \phi = 0$. Since $\S$ is closed, it follows that $\phi$ is constant and hence $L\o = 0$. Since $b \neq 0$, it follows that $\delta \o = 0$ and hence $\o$ is a Killing one-form as claimed.
\end{proof}

\begin{proof}[Proof of Theorem \ref{thm: initial_data_split}]
We first show that $\widehat H^k(\S, \R) \g \oplus \im(L) \oplus \R \Ric(\g)$ really is a direct sum. 
Since $\Scal(\g) = 0$, we have for all $f \in \widehat H^k(\S, \R)$ that
\[
	f\g[C \Ric(\g)] = f[C\g(\g, \Ric(\g))] = f[0] = 0
\]
and hence $\widehat H^k(\S, \R) \g \cap \R \Ric(\g) = \{0\}$. 
Since $\o \mapsto L\o$ has injective principal symbol, Lemma \ref{le: Fredholm alternative closed mfld} implies that
\[
	H^{k}(\S, S^2\S) = \im(L) \oplus \ker(L^*).
\]
Since $\Scal(\g) = 0$, $L^*(\Ric(\g)) = -2\tilde \na \cdot \Ric(\g) = - d\Scal(\g) = 0$ and hence 
\[
	\R\Ric(\g) \subset \ker(L^*),
\]
which implies that $\R \Ric(\g) \cap \im(L) = \{0\}$. 
That $\widehat H^k(\S, \R)\g \cap \im(L) = \{0\}$ is clear, since $\tr_g(L\o) = 0$. 
This proves the first claim. Let us now prove that $\left( \widehat H^k(\S, \R) \g \oplus \im(L) \oplus \R \Ric(\g) \right) \cap \Gamma_1^k(\S) = \{0\}$. 
For this, assume that 
\[
	0 = \h + \phi \g + L\o + C \Ric(\g) \in \Gamma_1^k(\S),
\]
with $\phi \in \widehat H^k(\S, \R)$ and $\o \in H^{k+1}(\S, T^*\S)$. We know that $\h \in \Gamma_1^k(\S)$ if and only if
\[
	P(\phi, \o) = \begin{pmatrix}-\frac{C}{n} \g(\Ric(\g), \Ric(\g)) \\ 0 \end{pmatrix},
\]
with $(a, b) = (-\frac{1}{n}, -2)$. 
By Lemma \ref{le: Fredholm alternative closed mfld} and Lemma \ref{le: almost bijectivity of P}, it follows that $C\g(\Ric(\g), \Ric(\g))$ must be orthogonal to the constant functions, i.e. 
\[
	\int_{\S}C\g(\Ric(\g), \Ric(\g)) d\mu_\g= 0.
\]
Since $\g(\Ric(\g), \Ric(\g)) \geq 0$, we conclude that either $\Ric(\g) = 0$ or $C = 0$ which in both cases implies $C \Ric(\g) = 0$. 
Hence $(\phi, \o) \in \ker(P)$, which by Lemma \ref{le: almost bijectivity of P} implies that $\phi$ is constant and $\o$ is a Killing one-form. 
Hence $L \o = 0$ and since $0 = \phi[1] = \int_\S \phi d \mu_\g$, it follows that $\phi = 0$. 
This proves $\left( \widehat H^k(\S, \R) \g \oplus \im(L) \oplus \R \Ric(\g) \right) \cap \Gamma_1^k(\S) = \{0\}$. 
Similarly, one proves $\left( \widehat H^k(\S, \R) \g \oplus \im(L) \oplus \R \Ric(\g) \right) \cap \Gamma_2^k(\S) = \{0\}$.

It remains to show that
\[
	H^k(\S, S^2\S) \subseteq \widehat H^k(\S, \R) \g \oplus \im(L) \oplus \R \Ric(\g) \oplus \Gamma_1^k(\S).
\]
Given $\a \in H^k(\S, S^2\S)$ we want to find $\phi \in \widehat{H}^k(\S, \R)$ and $\o \in H^{k+1}(\S, T^*\S)$ such that $\h := \a - \phi \g - L\o - C \Ric \in \Gamma_1^k(\S)$. Note that $\h \in \Gamma_1^k(\S)$ if and only if
\begin{equation} \label{eq: defining_phi_omega}
	P(\phi, \o) = \begin{pmatrix} - \frac1n \g(\a, \Ric(\g)) + \frac1n \Delta \tr_\g \a +  \frac{C}{n}\g(\Ric(\g), \Ric(\g)) \\ - 2\tilde \na \cdot \a \end{pmatrix}.
\end{equation}
By Lemma \ref{le: Fredholm alternative closed mfld} and Lemma \ref{le: almost bijectivity of P} we find $(\phi, \o) \in H^{k}(\S, \R \oplus T^*\S)$ if and only if we choose 
\[
	C:= \frac{\g(\a, \Ric(\g))[1]}{\int_\S g(\Ric, \Ric) d \mu_\g},
\]
when $\Ric(\g) \neq 0$. 
If $\Ric(\g) = 0$, it does not matter how we choose $C$, $C\Ric(\g) = 0$ anyway. 
What remains is to show that $L\o \in H^{k}(\S, S^2\S)$, up to now we only know that $L\o \in H^{k-1}(\S, S^2\S)$. 
But from equation \eqref{eq: defining_phi_omega}, we know that $L^*L \o = 2 d\varphi - 2 \tilde \na \cdot \a \in H^{k-1}(\S, T^*\S)$. 
Elliptic regularity theory implies that in fact $\o \in H^{k+1}(\S, T^*\S)$ which implies that $L\o \in H^{k}(\S, S^2\S)$. 
The inclusion $H^k(\S, S^2\S) \subseteq \widehat H^k(\S, \R) \g \oplus \im(L) \oplus \R \Ric(\g) \oplus \Gamma_2^k(\S)$ is proven analogously.
\end{proof}

\appendix

\section{Some linear differential operators}

The results presented here are to be considered well-known. 
However, some are only to be found in the literature in a different setting than we need.

\subsection{Linear elliptic operators}

Let $E, F \to M$ be vector bundles equipped with a positive definite metric. We start by the classical \enquote{Fredholm alternative} for elliptic operators on closed manifolds.

\begin{lemma}[Fredholm alternative on closed manifolds] \label{le: Fredholm alternative closed mfld} \index{Fredholm alternative}
Assume that $M$ is a closed manifold, $k \in \R$ and
\[
	P: H^{k+m}(M, E) \to H^k(M, F)
\]
is a differential operator of order $m$ with injective principal symbol. Then
\begin{equation} \label{eq: fredholm_alt}
	H^k(M, F) = \im(P) \oplus \ker(P^*),
\end{equation}
where $P^*$ is the formal adjoint as an operator
\[
	P^*: H^k(M, F) \to H^{k-m}(M, E).
\]
Extend or restrict $P$ and $P^*$ to act on the spaces
\begin{align*}
	\tilde P&: H^{-k+m}(M, E) \to H^{-k}(M, F), \\
	\tilde P^*&: H^{-k}(M, F) \to H^{-k-m}(M, E).
\end{align*}
Then $\im(\tilde P)$ is the annihilator of $\ker(P^*)$ and $\ker(\tilde P^*)$ is the annihilator of $\im(P)$ under the isomorphism $H^{-k}(M, E) \cong H^{k}(M, E)'$.

In particular, if $k\geq 0$, the sum in \eqref{eq: fredholm_alt} is $L^2$-orthogonal. In case $k = \infty$, equation \eqref{eq: fredholm_alt} holds true.
\end{lemma}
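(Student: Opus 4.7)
The plan is to reduce the lemma to classical Fredholm theory for the self-adjoint elliptic operator $P^*P$ on the closed manifold $M$. Since $\sigma(P)$ is injective, $\sigma(P^*P) = \sigma(P)^*\sigma(P)$ is positive definite, so $P^*P \colon H^{k+m}(M,E) \to H^{k-m}(M,E)$ is elliptic of order $2m$. Standard elliptic theory on a closed manifold then yields that $P^*P$ is Fredholm with closed image, finite-dimensional kernel, and kernel contained in $C^\infty(M,E)$. A short integration by parts, valid because kernel elements are smooth, identifies $\ker(P^*P) = \ker(P)$: if $P^*Pu = 0$ then $\|Pu\|_{L^2}^2 = \langle P^*Pu, u\rangle_{L^2} = 0$.

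Using this, I would introduce the Green's operator $G \colon H^{k-m}(M,E) \to H^{k+m}(M,E)$ for $P^*P$, defined to vanish on $\ker(P)$ and to invert $P^*P$ on the $L^2$-orthogonal complement. Because $\ker(P) \subset C^\infty(M,E)$ is finite-dimensional, the $L^2$-projection onto $\ker(P)$, and hence $G$, extends continuously to every Sobolev level. For $v \in H^k(M,F)$, the key observation is that $P^*v$ annihilates $\ker(P)$: for every smooth $\phi \in \ker P$,
\[
    \langle P^*v, \phi\rangle = \langle v, P\phi\rangle = 0.
\]
Consequently $P^*P\,G(P^*v) = P^*v$; setting $u := GP^*v$ and $w := v - Pu$ gives $v = Pu + w$ with $Pu \in \im(P)$ and $P^*w = 0$. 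Directness follows at once: if $Pu \in \ker(P^*)$, then $P^*Pu = 0$, so $u \in \ker(P)$ by the previous step, whence $Pu = 0$.

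For the duality statements, I would apply the decomposition just established with $k$ replaced by $-k$, obtaining $H^{-k}(M,F) = \im(\tilde P) \oplus \ker(\tilde P^*)$. The identity $\ker(\tilde P^*) = \im(P)^\perp$ is tautological from the definition of the extension: $\tilde P^*u = 0$ iff $\langle u, P\phi\rangle = 0$ for every $\phi \in H^{k+m}(M,E)$. For $\im(\tilde P) = \ker(P^*)^\perp$, the inclusion $\subset$ is a one-line pairing calculation, and the reverse inclusion follows by picking $v \in \ker(P^*)^\perp$, splitting $v = v_1 + v_2$ along $\im(\tilde P) \oplus \ker(\tilde P^*)$, and noting that $v_2$ lies in both $\ker(P^*)^\perp$ and $\ker(\tilde P^*) = \im(P)^\perp$, hence annihilates the whole of $H^k(M,F) = \im(P) \oplus \ker(P^*)$ and is zero. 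For $k \geq 0$ the duality pairing restricts to the $L^2$ inner product, so the decomposition is $L^2$-orthogonal. For $k = \infty$, elliptic regularity for $P^*P$ shows that $G$ preserves smoothness, so the decomposition of a smooth $v$ has smooth summands.

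The hard part will be this duality bookkeeping, specifically verifying that $\tilde P$ and $\tilde P^*$ are indeed given by the expected distributional-adjoint formulas and that the closed-subspace/annihilator correspondences transfer cleanly between Sobolev levels; everything else reduces to classical facts about elliptic operators on closed manifolds together with the identity $\ker(P^*P) = \ker(P)$.
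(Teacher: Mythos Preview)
Your argument is correct and is precisely the standard route to the Fredholm alternative for overdetermined elliptic operators: pass to the genuinely elliptic, formally self-adjoint operator $P^*P$, use its Green's operator to produce the splitting $v = P(GP^*v) + (v - P GP^*v)$, and read off the duality statements from the decomposition at level $-k$. The paper's own proof says no more than ``see Besse's Appendix~I for $k\geq 0$ and then use the Sobolev duality $H^{-k}\cong (H^k)'$,'' so you are essentially writing out what the citation contains, together with the observation that the finite-dimensional smooth kernel makes the $L^2$-projection and Green's operator extend to every Sobolev index. One small point worth making explicit in your write-up: the step ``$P^*v$ annihilates $\ker P$, hence $P^*v\in\im(P^*P)$'' is itself an appeal to the Fredholm alternative for the self-adjoint elliptic operator $P^*P$ (image equals annihilator of kernel), so be clear that this is the classical input you are invoking rather than something proved along the way.
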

\begin{proof}
See for example \cite{Besse1987}*{Appendix I} for equation \eqref{eq: fredholm_alt} when $k \geq 0$. Generalising this to any $k \in \R$ is straightforward, when using that
\begin{align*}
	H^{-k}(M, E) &\to H^k(M, E)' \\
	f &\mapsto (\varphi \mapsto \langle D^{-k}f, D^k \varphi \rangle_{L^2(M ,E)})
\end{align*}
is an isomorphism. 
\end{proof}

One part of the previous lemma generalises to non-compact manifold. 

\begin{lemma}
Let $M$ be a possibly non-compact manifold and let $K \subset M$ be a compact subset and let $k \in \R \cup \{\infty\}$. Assume that
\[
	P: H^{k+m}_K(M, E) \to H^k_K(M, F)
\]
is a differential operator of order $m$ with injective principal symbol. Assume furthermore that $P$ is injective. Then
\[
	\im(P) \subset H^k_K(M, F)
\]
is closed and $P$ is an isomorphism of Hilbert spaces onto its image. 
\end{lemma}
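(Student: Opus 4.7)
The strategy is to reduce the problem to standard elliptic theory on a closed manifold. The plan is to first embed a relatively compact open neighborhood $U$ of $K$ into a closed manifold $N$, for example by doubling $\bar U$ across a smooth boundary. Using a partition of unity, extend $E$ and $F$ to smooth vector bundles $\tilde E, \tilde F \to N$ and extend $P$ to a differential operator $\tilde P \in \mathrm{Diff}_m(\tilde E, \tilde F)$ whose principal symbol is injective on all of $N$; outside a neighborhood of $K$ one may simply take $\tilde P$ to be the connection Laplacian coupled with a suitable bundle map, whose principal symbol is injective. The natural inclusion $H^k_K(M, E) \hookrightarrow H^k(N, \tilde E)$ is then a closed embedding for every $k \in \R$.

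Since $\tilde P$ is elliptic on the closed manifold $N$, Gårding's inequality gives, for each finite $k \in \R$, a constant $C_k$ with
\[
	\|u\|_{H^{k+m}(N)} \leq C_k \bigl( \|\tilde P u\|_{H^k(N)} + \|u\|_{H^k(N)} \bigr)
\]
for all $u \in H^{k+m}(N)$. Restricting to sections supported in $K$ this reads
\[
	\|u\|_{H^{k+m}_K(M)} \leq C_k \bigl( \|P u\|_{H^k_K(M)} + \|u\|_{H^k_K(M)} \bigr)
\]
for all $u \in H^{k+m}_K(M, E)$.

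The key step is to upgrade this to the full estimate $\|u\|_{H^{k+m}_K} \leq C \|Pu\|_{H^k_K}$ using the injectivity of $P$, by a standard contradiction argument. If it fails, there is a sequence $u_n \in H^{k+m}_K$ with $\|u_n\|_{H^{k+m}_K} = 1$ and $\|Pu_n\|_{H^k_K} \to 0$. Rellich's theorem on $N$ (applicable since all $u_n$ are supported in the fixed compact $K$) produces a subsequence converging in $H^k_K$ to some $u$, and then $Pu = 0$ distributionally. Elliptic regularity on $N$ forces $u \in C^\infty_K \subset H^{k+m}_K$, so injectivity of $P$ gives $u = 0$. But the Gårding estimate then reads $1 \leq C_k (\|Pu_n\|_{H^k_K} + \|u_n\|_{H^k_K}) \to 0$, a contradiction.

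Once $\|u\|_{H^{k+m}_K} \leq C \|Pu\|_{H^k_K}$ is established for each finite $k$, both claims follow immediately: if $Pu_n \to v$ in $H^k_K$, then $\{u_n\}$ is Cauchy in $H^{k+m}_K$ with limit $u$ satisfying $Pu = v$, so $\im(P)$ is closed; and the same estimate bounds $P^{-1}$ on $\im(P)$. The case $k = \infty$ is handled by noting that $H^\infty_K = C^\infty_K$ carries the Fréchet topology of convergence in every $H^j_K$, so the finite-level estimates transfer directly. The main technical obstacle I expect is arranging the globally elliptic extension $\tilde P$, which requires a careful partition-of-unity patching to preserve injectivity of the principal symbol; once that is in hand, the remainder is a textbook Fredholm-type argument.
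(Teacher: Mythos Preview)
Your argument is correct and complete. Both your proof and the paper's begin with the same reduction---embedding a neighborhood of $K$ into a closed manifold $N$ and extending $P$ to an operator $\tilde P$ with globally injective principal symbol---but then diverge. You run the textbook Fredholm route: G{\aa}rding's inequality plus a Rellich compactness contradiction to upgrade to the coercive estimate $\|u\|_{H^{k+m}_K} \leq C\|Pu\|_{H^k_K}$, from which closedness of the image is immediate. The paper instead constructs an explicit invertible operator: it forms $\tilde P^*\tilde P + \lambda$ on $N$, where $\lambda \geq 0$ is a smooth function vanishing precisely on $K$, checks that this operator has trivial kernel (the $\lambda$-term forces any kernel element to be supported in $K$, where injectivity of $P$ then kills it), and uses the resulting global inverse to show directly that a Cauchy sequence $P u_n$ forces $u_n$ to converge.

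Your approach is arguably more elementary and more recognisable as standard elliptic theory; the paper's potential trick avoids the contradiction/compactness step and gives a slightly more constructive inverse, which makes the passage to $k = \infty$ (via nets rather than sequences) marginally cleaner. Both methods hinge equally on the one genuinely technical point you correctly flag: arranging the extension $\tilde P$ to have injective principal symbol on all of $N$.
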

\begin{proof}
By \cite{BaerWafo2014}*{Sec. 1.6.2.}, we can embed an open neighbourhood $U$ of $K$ isometrically into a closed Riemannian manifold $(K', \g')$. Denote the embedding by $\iota: U \hookrightarrow K'$. Moreover, we can extend the vector bundles in a smooth way. Let us for simplicity still denote them by $E$ and $F$. For any section $f:M \to E$, define $\iota_*f :K' \to E$ such that $f|_K = (\iota_*f) \circ\iota|_K$, just by multiplying by a bump function which equals $1$ on $K$ and vanishes outside $U$. It follows that there is a differential operator with injective principal symbol
\[
	Q: H^{k+m}(K', E) \to H^{k}(K', F)
\]
such that the following diagram commutes:
\[
    \xymatrix{
       H^{k+m}_K(M,E) \ar[r]^{P} \ar[d]_{\iota_*} & H^k_K(M, F) \ar[d]^{\iota_*} \\
       H^{k+m}(K', E) \ar[r]^{Q}  & H^{k}(K', F) }.
\]
Choose a function $\lambda: K' \to \R$ such that $\lambda(x) > 0$ for all $x \in K' \backslash \iota(K)$ and $\lambda|_{\iota(K)} = 0$. We claim that 
\[
	Q^*Q + \lambda: H^{k+m}(K', E) \to H^{k-m}(K', E)
\]
is an isomorphism of Hilbert spaces (in the smooth case, $k=\infty$, we claim that this is an isomorphism of Fr{\'e}chet spaces). By Lemma \ref{le: Fredholm alternative closed mfld}, it suffices to show that $\ker(Q^*Q + \lambda) = \{0\}$, since $Q^*Q + \lambda$ is formally self-adjoint. For any $a \in \ker(Q^*Q + \lambda)$ it follows that $a$ is smooth and
\[
	\int_{K'} \abs{Qa}^2 + \lambda \abs{a}^2 dVol  = 0.
\]
Hence $\supp(a) \subset \iota(K)$ and $Qa = 0$. This implies that $b := \iota^*a$, extended to whole $M$ by zero, solves $P(b) = 0$. Since $\supp(b) \subset K$ and $P$ is injective, this implies that $b = 0$ and hence $a = 0$. We conclude the claim.

Assume now that $P(u_n) \to f$ in $H^k_K(M, F)$, with $u_n \in H^{k+m}_K(M, E)$. It follows that $Q(\iota_* u_n) \to \iota_* f$ in $H^k_{\iota(K)}(K', F)$ and $\iota_* u_n \in H^{k+m}_{\iota(K)}(K', E)$. Hence
\[
	(Q^*Q + \lambda)(\iota_* u_n) = Q^*Q(\iota_* u_n) \to Q^*(\iota_*(f))
\]
in $H^{k-m}_{\iota(K)}(K', E)$. Therefore, there is a $v \in H^{k+m}(K', E)$ such that $\iota_* u_n \to v$ in $H^{k+m}(K', E)$. Since $\supp(\iota_* u_n) \subset \iota(K)$ and $\iota_* u_n \to v$ as distributions, the support of $v$ cannot be larger than $\iota(K)$. Hence $v \in H^k_{\iota(K)}(K', E)$. Now define
\[
	u := \iota^* v \in H^{k+m}_K(U, E)
\]
and extend it by zero to an element in $H^{k+m}_K(M, E)$. Note that $u_n \to u$ in $H^{k+m}_K(M, E)$. It follows that 
\[
	P(u) = \lim_{n \to \infty} P(u_n) = f,
\]
as claimed (in the case $k = \infty$, the last line is to be thought of as a limit of a net).
\end{proof}

\begin{definition}[Laplace type operators] \index{laplace type operator}
Let $g$ be a Riemannian metric on $M$.
A differential operator $P \in \mathrm{Diff}_2(E, E)$ is called a Laplace type operator if its principal symbol is given by the metric. 
Equivalently, in local coordinates, $P$ takes the form
\[
	P = - \sum_{i,j} g^{ij}\frac{\partial^2}{\partial x^i \partial x^j} + l.o.t.
\]
\end{definition}

We will need the following theorem, known as the \emph{Strong unique continuation property}. We quote the statement from \cite{Baer1997}. For a proof, see \cite{Aronszajn1957}*{Thm. on p. 235 and Rmk. 3 on p. 248}.

\begin{thm}[Aronszajn's Unique Continuation Theorem] \label{thm: aronszajn}
Let $(M,g)$ be a connected Riemannian manifold and let $P$ be a Laplace type operator acting on sections of a vector bundle $E \to M$. Assume that $Pu = 0$ and that $u$ vanishes at some point of infinite order, i.e. that all derivatives vanish at that point. Then $u = 0$. 
\end{thm}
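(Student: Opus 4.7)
The plan is to prove this by a Carleman estimate argument combined with a connectedness argument, which is the standard route to Aronszajn-type unique continuation results. The statement is local-to-global: the hard part is the local statement that $u$ vanishes in a \emph{neighbourhood} of any point where it vanishes to infinite order, and the global statement then follows from connectedness. So I would split the proof accordingly.

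For the local step, I would work in a chart around the bad point $p$ and use geodesic polar (or normal) coordinates to straighten the metric, so that $P$ becomes, after multiplying by a positive function, an operator of the form $-\Delta + B \cdot \nabla + V$ acting on $\R^N$-valued functions, where $\Delta$ is the flat Laplacian and $B, V$ are smooth coefficients. The key analytic input is a weighted $L^2$ inequality of the Carleman type: for a suitable radial weight $\varphi(x)$ (for instance $\varphi(x) = -\log|x| + \varepsilon |x|^{\alpha}$ with appropriately chosen parameters) there exists a constant $C > 0$ and a threshold $\tau_0$ such that
\[
\tau^3 \int e^{2\tau \varphi} |v|^2 \, dx + \tau \int e^{2\tau \varphi} |\nabla v|^2 \, dx \leq C \int e^{2\tau \varphi} |Pv|^2 \, dx
\]
for all $\tau \geq \tau_0$ and all smooth compactly supported $v$ in a punctured neighbourhood of $p$. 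This is where the assumption that $P$ has metric (i.e.\ scalar) principal symbol is essential: it is what makes $\varphi$ a pseudoconvex weight and lets one absorb the lower order terms $B, V$ by taking $\tau$ large.

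Granted this Carleman estimate, the local unique continuation follows by a standard cutoff argument. Fix a cutoff $\chi$ supported in a small ball around $p$ and equal to $1$ on a smaller ball. Apply the estimate to $v = \chi u$. On the right hand side, $P(\chi u) = \chi Pu + [P,\chi] u = [P,\chi] u$, since $Pu = 0$, and the commutator $[P,\chi]$ is supported in the annular region where $\chi$ is nonconstant, so $e^{2\tau \varphi}|P(\chi u)|^2$ is bounded by $C e^{-2\tau c}$ for some $c>0$ uniformly in $\tau$. On the left hand side, the weight $e^{2\tau \varphi}$ is enormous near $p$, and the hypothesis that $u$ vanishes to infinite order at $p$ (together with elliptic regularity, which upgrades $u$ to $C^\infty$) lets us show that $e^{2\tau \varphi} |u|^2$ is integrable near $p$ for every $\tau$. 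Letting $\tau \to \infty$ forces $u \equiv 0$ on the smaller ball.

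Finally, to globalise, let $Z = \{x \in M : u \text{ vanishes to infinite order at } x\}$. The set $Z$ is closed by smoothness of $u$, nonempty by hypothesis, and the local step shows it is open: indeed, if $x \in Z$, then by the local argument $u$ vanishes in an open neighbourhood of $x$, and on this neighbourhood $u$ certainly vanishes to infinite order at each point. Connectedness of $M$ then gives $Z = M$, hence $u \equiv 0$. The main obstacle, and the only nontrivial analytic input, is establishing the Carleman estimate with the correct weight; this is exactly the content of Aronszajn's original paper and requires a careful integration by parts with the conjugated operator $e^{\tau \varphi} P e^{-\tau\varphi}$, split into its symmetric and antisymmetric parts.
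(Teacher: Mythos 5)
The paper does not prove this theorem at all: it is quoted as a classical result, with the proof delegated to the reference \cite{Aronszajn1957} (and the formulation taken from \cite{Baer1997}). So there is no internal argument to compare yours against; the relevant question is whether your sketch would stand on its own. Structurally it follows the standard modern route — a local strong unique continuation statement proved by a Carleman estimate with a singular radial weight, then globalisation by showing that the set of points of infinite-order vanishing is open and closed in the connected manifold — and that outline is correct, including the observation that elliptic regularity makes $u$ smooth and that the scalar (metric) principal symbol is what allows the lower-order bundle-coupling terms to be absorbed.

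However, as a proof it has a genuine gap, and you name it yourself: the Carleman inequality with weight of the type $-\log|x| + \varepsilon|x|^{\alpha}$, uniform in $\tau$, for Laplace-type operators on sections of a vector bundle, is asserted rather than proved, and it is the entire analytic content of the theorem — everything else in your argument is routine. In effect your proposal reduces the statement to the same external source the paper cites. Two smaller points would also need care if you wanted to make the local step rigorous: (i) your estimate is stated for test sections compactly supported in a \emph{punctured} neighbourhood of $p$, but $v = \chi u$ does not vanish near $p$; one must insert a second cutoff excising a small ball around $p$ and let its radius shrink, using the infinite-order vanishing (equivalently $\int_{B_r}|u|^2 = O(r^N)$ for all $N$) to kill both the weighted norm of $u$ near $p$ and the commutator terms produced by the inner cutoff — your remark about integrability of $e^{2\tau\varphi}|u|^2$ gestures at this but does not carry it out; (ii) in the globalisation step one should note that vanishing of $u$ on an open set trivially gives infinite-order vanishing at each of its points, so $Z$ is open, which you do state. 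So: correct plan, standard route, but the key estimate is cited, not established, exactly as in the paper itself.
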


\begin{cor} \label{cor: Laplace-type closed image}
Let $k \in \R \cup \{\infty\}$. Assume that $M$ is connected. Let $K \subset M$ be a compact subset such that $K \neq M$. Assume that 
\[
	P: H^{k+2}_K(M, E) \to H^k_K(M, E)
\]
is a Laplace-type operator. Then
\[
	\im(P) \subset H^k_K(M, E)
\]
is closed and $P$ is an isomorphism of Hilbert spaces (Fr{\'e}chet spaces if $k = \infty$) onto its image. 
\end{cor}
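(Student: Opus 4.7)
The plan is to reduce this to the preceding lemma, which already gives closed image and the isomorphism statement provided $P$ is injective. So the whole task is to establish that $P: H^{k+2}_K(M, E) \to H^k_K(M, E)$ is injective, after which the topological conclusions follow by direct citation.

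To see injectivity, suppose $u \in H^{k+2}_K(M, E)$ with $Pu = 0$. Since $P$ is of Laplace type, its principal symbol is the metric and in particular $P$ is elliptic; interior elliptic regularity applied to the equation $Pu = 0$ therefore upgrades $u$ to $u \in C^\infty(M, E)$. Because $\supp(u) \subset K$ and $K \neq M$, the open set $M \setminus K$ is nonempty, and $u$ together with all of its covariant derivatives vanishes identically on $M \setminus K$. In particular, $u$ vanishes to infinite order at any point of $M \setminus K$. Since $M$ is connected, Aronszajn's Unique Continuation Theorem (Theorem \ref{thm: aronszajn}) forces $u \equiv 0$ on $M$, which is exactly injectivity of $P$.

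Having established injectivity, we apply the preceding lemma with $m = 2$ and $F = E$. The hypothesis of injective principal symbol is immediate for a Laplace-type operator, and we have just verified the additional injectivity hypothesis. Its conclusion delivers precisely that $\im(P) \subset H^k_K(M, E)$ is closed and that $P$ is an isomorphism of Hilbert spaces onto its image, with the Fr\'echet variant in the case $k = \infty$.

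The only step that is not purely formal is the invocation of Aronszajn's theorem, and even here the main delicacy is simply to note that elliptic regularity converts the a priori distributional solution into a smooth one so that the classical Unique Continuation Theorem applies. After that, everything is a direct appeal to the lemma proved in the previous subsection.
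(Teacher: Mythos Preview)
Your proof is correct and follows exactly the same route as the paper: reduce to the preceding lemma by establishing injectivity via Aronszajn's unique continuation theorem, using that $u$ vanishes on the nonempty open set $M\setminus K$. Your explicit mention of elliptic regularity to upgrade $u$ to smoothness before invoking Theorem~\ref{thm: aronszajn} is a nice clarification that the paper leaves implicit.
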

\begin{proof}
We only need to show that $P$ is injective. Assume that $Pu= 0$. Since $u|_{M \backslash K} = 0$, Theorem \ref{thm: aronszajn} implies that $u = 0$. 
\end{proof}

\subsection{Linear wave equations} \label{sec: Waves} \index{wave!equation}

In the literature, there are many variants of stating the well-posedness of the Cauchy problem for linear wave equations with initial data of Sobolev regularity. The statement that is relevant for our purposes is not in the form we need it in the literature, but can be derived by standard techniques. 

\begin{definition}[Wave operator] \index{wave!operator}
Let $g$ be a Lorentzian metric on $M$. A differential operator $P \in \mathrm{Diff}_m(E, E)$ is called a wave operator if its principal symbol is given by the metric.
Equivalently, in local coordinates, $P$ takes the form
\[
	P = - \sum_{i,j} g^{ij}\frac{\partial^2}{\partial x^i \partial x^j} + l.o.t.
\]
\end{definition}

Wave operators are sometimes also called \emph{normally hyperbolic operators}. 
We assume here that $(M, g)$ is a globally hyperbolic spacetime and let $\S \subset M$ be a Cauchy hypersurface and $t:M \to \R$ a Cauchy temporal function such that $\S = t^{-1}(t_0)$ for some $t_0 \in t(M)$.
Let $E \to M$ be a real vector bundle and let $P$ be a wave operator acting on sections in $E$. Denote by $\nu$ the future pointing unit normal vector field on $\S$.

\begin{thm}[Existence and uniqueness of solution] \label{thm: WellposednessLinearWaves} \index{Cauchy problem!wave equation}
Let $k \in \R \cup \{\infty\}$ be given. For each $(u_0, u_1, f) \in H_{loc}^k(\S, E|_\S) \oplus H_{loc}^{k-1}(\S, E|_\S) \oplus CH^{k-1}_{loc}(M, E, t)$, there is a unique $u \in CH_{loc}^k(M, E, t)$ such that 
\begin{align*}
	Pu &= f, \\
	u|_\S &= u_0, \\
	\na_\nu u|_\S &= u_1.
\end{align*}
Moreover, we have finite speed of propagation, i.e.  \index{finite speed of propagation!wave equations}
\[
	\supp(u) \subset J \left(\supp(u_0) \cup \supp (u_1) \cup K \right),
\]
for any subset $K \subset M$ such that $\supp(f) \subset J(K)$.
\end{thm}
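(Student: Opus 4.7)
\textbf{Proof plan for Theorem \ref{thm: WellposednessLinearWaves}.}
The plan is to start from the classical smooth existence theorem for wave operators on globally hyperbolic spacetimes (as in \cite{BaerWafo2014} or the Bär-Ginoux-Pfäffle monograph) and then bootstrap the regularity of the initial data, in both directions, by a combination of energy estimates, approximation, and duality. First I would reduce to the homogeneous case $f=0$ by a Duhamel argument: because the unit normal flow of $\grad(t)$ is smooth and the inhomogeneity lives in $CH^{k-1}_{loc}(M, E, t)$, one can integrate the slice-wise action of the free evolution against $f(\tau)$ over $\tau \in t(M)$ and deduce the inhomogeneous statement from the homogeneous one, once the latter is established at the required Sobolev level.

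Second, for integer $k \geq 1$ I would obtain the solution via standard energy estimates: test the equation $Pu=0$ against $\nabla_t u$ to get a $CH^1_{loc}$-estimate, then commute the $m$-fold covariant derivative $\nabla^m_{t,\ldots,t}$ through $P$ (which produces lower-order terms absorbed by Grönwall) to obtain, for every compact subset $K \subset M$, an estimate of the form
\begin{equation*}
\|u(\tau)\|_{H^k(\S_\tau \cap K)} + \|\nabla_t u(\tau)\|_{H^{k-1}(\S_\tau \cap K)} \leq C_K(\tau) \bigl(\|u_0\|_{H^k(\S)} + \|u_1\|_{H^{k-1}(\S)}\bigr).
\end{equation*}
Elliptic regularity on each slice then transforms control on time derivatives into spatial Sobolev regularity, giving exactly the $CH^k_{loc}$ topology. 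Approximating smooth data by sequences in $H^k_{loc}(\S)$ and invoking these estimates yields existence of $u \in CH^k_{loc}(M, E, t)$. Uniqueness follows because any two solutions agree on $\Sigma$ up to first order, and the energy estimate forces the difference to vanish. For non-integer positive $k$, I would use complex interpolation of the Sobolev scales $(H^k)_k$ together with the bounded linear solution operator established at integer levels.

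Third, for negative real $k$ I would use duality with the formal adjoint $P^*$, which is itself a wave operator on the dual bundle. Given compactly supported distributional data $(u_0, u_1)$ and a test section $\varphi \in C^\infty_c(M, E^*)$, pick a Cauchy hypersurface $\S'$ to the future of $\supp(\varphi)$, solve $P^* v = \varphi$ backward in time with zero data on $\S'$ (which produces a smooth, spatially compactly supported $v$ by the smooth case), and \emph{define}
\begin{equation*}
u[\varphi] := \int_\S \bigl(\langle u_1, v \rangle_E - \langle u_0, \nabla_\nu v\rangle_E\bigr)\, d\mu_{\tilde g}.
\end{equation*}
A Green's-type identity shows that this coincides with the classical solution when the data are smooth; by the energy estimates applied to $v$, the right-hand side is continuous in $\varphi$ for the topology dual to $H^{-k}\times H^{-k+1}$, so $u$ is a well-defined distribution of the required Sobolev class, and one verifies $Pu=0$ and the correct trace on $\S$ by choosing appropriate $\varphi$. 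Uniqueness is recovered by the same duality: if $Pu=0$ with vanishing traces, pairing with $\varphi$ and the adjoint solution gives $u[\varphi]=0$ for all $\varphi$.

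Finally, the finite speed of propagation statement transfers automatically: it holds for smooth data by the classical theorem, and both the approximation/energy-estimate argument and the duality construction preserve support in $J(\supp(u_0)\cup\supp(u_1)\cup K)$, since the auxiliary adjoint solutions $v$ arising in the duality are themselves supported in the causal past of $\supp(\varphi)$. The main obstacle I expect is step three: getting the duality-defined object to genuinely live in the finite energy space $CH^k_{loc}(M, E, t)$ rather than merely in $H^k_{loc}(M, E)$, which requires carefully matching the temporal regularity produced by repeatedly differentiating $v$ in time against the spatial Sobolev losses; this must be combined with Lemma \ref{le: convergence_CH_sc} to identify the strict inductive limit topology on spatially compact sections with the one dictated by the adjoint pairing.
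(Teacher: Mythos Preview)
Your plan is sound and would succeed, but it rebuilds from scratch a result that the paper simply quotes. The paper does not give a self-contained proof of Theorem~\ref{thm: WellposednessLinearWaves}; it invokes \cite{BaerWafo2014}*{Thm.~13} (together with \cite{BaerGinouxPfaeffle2007}*{Thm.~3.2.11}), which already establishes existence, uniqueness, and finite propagation speed for wave equations with data of \emph{arbitrary real} Sobolev regularity, under the extra hypothesis of spatially compact support. The only remaining work is then a standard localisation: cover $\S$ by relatively compact open sets, cut off the data by a partition of unity, solve each compactly supported piece, and reassemble via finite speed of propagation (done for $k=\infty$ in \cite{BaerFredenhagen2009}). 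Finally, since the cited theorem only yields $u \in C^0(t(M),H^k_{loc}) \cap C^1(t(M),H^{k-1}_{loc})$, the full $CH^k_{loc}$ regularity is bootstrapped by repeatedly applying $\nabla_t$ to $Pu=f$ and using $f \in CH^{k-1}_{loc}$.

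Your outline --- Duhamel reduction, energy estimates at integer levels, interpolation to non-integer $k$, and duality with $P^*$ for negative $k$ --- is essentially a reconstruction of the proof of \cite{BaerWafo2014}*{Thm.~13} itself. That is a legitimate route, and you have correctly flagged its delicate point: the duality formula produces a distribution in $H^k_{loc}(M,E)$, and upgrading this to the finite-energy space $CH^k_{loc}(M,E,t)$ for negative $k$ requires the evolution-equation machinery of B\"ar--Wafo rather than Lemma~\ref{le: convergence_CH_sc} (which concerns supports, not temporal regularity). But there is no need to redo all of this here; the paper treats the theorem as a known black box and spends its effort on the linearised Einstein equation proper.
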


The theorem is proven by a standard method, translating the result in (\cite{BaerWafo2014}*{Thm. 13} and \cite{BaerGinouxPfaeffle2007}*{Thm. 3.2.11}), where spatially compact support was assumed, to the general case.
This can be done due to finite speed of propagation for wave equation.
It was done in the smooth case when $k = \infty$ in \cite{BaerFredenhagen2009}*{Cor. 5 in ch. 3}. 
Let us remark that from \cite{BaerWafo2014}*{Thm. 13} we only conclude that $u \in C^0(I, H^k_{loc}) \cap C^1(I, H^{k-1}_{loc})$, but since we assume more regularity on the right hand side $f$, we can use the equation $Pu = f$ to conclude the stated regularity on $u$. A simple corollary is the following.

\begin{cor}[Continuous dependence on initial data] \label{cor: cont_dep_id}
Let $k \in \R \cup \{\infty\}$ be given. Then the map
\begin{align*}
	CH^k_{loc}(M, E, t) \cap \ker(P) &\to H_{loc}^k(\S, E|_\S) \oplus H_{loc}^{k-1}(\S, E|_\S) \\
	u &\to (u|_\S, \na_\nu u|_\S)
\end{align*}
is an isomorphism between topological vector spaces. In particular, the inverse map is continuous.
\end{cor}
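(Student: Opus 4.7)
The plan is to check that the stated map is a continuous linear bijection between Fr{\'e}chet spaces, so that the open mapping theorem upgrades this to an isomorphism of topological vector spaces.

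First I would verify that both sides are Fr{\'e}chet. The space $CH^k_{loc}(M, E, t)$ is Fr{\'e}chet by construction; since the wave operator $P$ extends continuously to $CH^k_{loc}(M, E, t) \to CH^{k-2}_{loc}(M, E, t)$ as recalled in Section \ref{sec: Notation}, the subspace $CH^k_{loc}(M, E, t) \cap \ker(P)$ is closed and hence Fr{\'e}chet. On the target side, each $H^k_{loc}(\S, E|_\S)$ is Fr{\'e}chet, and so is the direct sum. In the case $k = \infty$ these reduce to the usual Fr{\'e}chet spaces of smooth sections.

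Next, the map is linear, and continuity is essentially built into the definition
\[
	CH^k_{loc}(M, E, t) = \bigcap_{j = 0}^\infty C^j\bigl(t(M), H^{k-j}_{loc}(\S_\cdot, E|_{\S_\cdot})\bigr),
\]
since evaluation at $\tau = t_0$ yields a continuous map $CH^k_{loc}(M, E, t) \to H^k_{loc}(\S, E|_\S)$ via the $j = 0$ factor, and a continuous map $u \mapsto \na_t u|_\S$ into $H^{k-1}_{loc}(\S, E|_\S)$ via the $j = 1$ factor. Because $\nu = -\a^{-1}\grad(t)|_\S$ for the smooth positive lapse $\a$, the normal derivative $\na_\nu u|_\S$ differs from $\na_t u|_\S$ only by multiplication with a smooth function; hence $u \mapsto (u|_\S, \na_\nu u|_\S)$ is continuous.

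Bijectivity is precisely Theorem \ref{thm: WellposednessLinearWaves} applied with source term $f = 0$: for every pair $(u_0, u_1) \in H^k_{loc}(\S, E|_\S) \oplus H^{k-1}_{loc}(\S, E|_\S)$ there is a unique $u \in CH^k_{loc}(M, E, t)$ with $Pu = 0$, $u|_\S = u_0$ and $\na_\nu u|_\S = u_1$. The map thus has a well-defined linear inverse, and the open mapping theorem for Fr{\'e}chet spaces then forces this inverse to be continuous. I do not anticipate a genuine obstacle here; the only mildly delicate point is verifying continuity of the trace and normal-derivative maps at the correct Sobolev indices, and this falls out directly from the definition of the $CH^k_{loc}$-topology.
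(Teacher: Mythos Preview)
Your argument is correct and follows the same route as the paper's own proof, which simply asserts continuity and bijectivity between Fr{\'e}chet spaces by Theorem \ref{thm: WellposednessLinearWaves} and then invokes the open mapping theorem. You have merely spelled out the verifications (Fr{\'e}chet structure on both sides, continuity of the trace and normal-derivative maps from the definition of $CH^k_{loc}$) that the paper leaves implicit.
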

\begin{proof}[Proof of the corollary]
By the preceding theorem, this map is continuous and bijective between Fr{\'e}chet spaces. The open mapping theorem for Fr{\'e}chet spaces implies the statement.
\end{proof}

\begin{bibdiv}
\begin{biblist}

\bib{Aronszajn1957}{article}{
   author={Aronszajn, N.},
   title={A unique continuation theorem for solutions of elliptic partial
   differential equations or inequalities of second order},
   journal={J. Math. Pures Appl. (9)},
   volume={36},
   date={1957},
   pages={235--249},
%   issn={0021-7824},
%   review={\MR{0092067}},
}

\bib{Baer1997}{article}{
   author={B\"ar, C.},
   title={On nodal sets for Dirac and Laplace operators},
   journal={Comm. Math. Phys.},
   volume={188},
   date={1997},
   number={3},
   pages={709--721},
%   issn={0010-3616},
%   review={\MR{1473317}},
}

\bib{BaerGinouxPfaeffle2007}{book}{
   author={B\"ar, C.},
   author={Ginoux, N.},
   author={Pf\"affle, F.},
   title={Wave equations on Lorentzian manifolds and quantization},
   series={ESI Lectures in Mathematics and Physics},
   publisher={European Mathematical Society (EMS), Z\"urich},
   date={2007},
   pages={viii+194},
%   isbn={978-3-03719-037-1},
%   review={\MR{2298021}},
}

\bib{BaerWafo2014}{article}{
   author={B\"ar, C.},
   author={Tagne Wafo, R.},
   title={Initial value problems for wave equations on manifolds},
   journal={Math. Phys. Anal. Geom.},
   volume={18},
   date={2015},
   number={1},
   pages={Art. 7, 29},
%   issn={1385-0172},
%   review={\MR{3316713}},
}

\bib{BaerFredenhagen2009}{collection}{
   title={Quantum field theory on curved spacetimes: Concepts and mathematical foundations},
   series={Lecture Notes in Physics},
   volume={786},
   editor={B\"ar, C.},
   editor={Fredenhagen, K.},
   publisher={Springer-Verlag, Berlin},
   date={2009},
   pages={x+160},
%   isbn={978-3-642-02779-6},
%   review={\MR{2581530}},
}

\bib{BernalSanches08}{article}{
   author={Bernal, A. N.},
   author={S\'anchez, M.},
   title={Smoothness of time functions and the metric splitting of globally
   hyperbolic spacetimes},
   journal={Comm. Math. Phys.},
   volume={257},
   date={2005},
   number={1},
   pages={43--50},
%   issn={0010-3616},
%   review={\MR{2163568}},
}

\bib{Besse1987}{book}{
   author={Besse, A. L.},
   title={Einstein manifolds},
   series={Classics in Mathematics},
   note={Reprint of the 1987 edition},
   publisher={Springer-Verlag, Berlin},
   date={2008},
   pages={xii+516},
%   isbn={978-3-540-74120-6},
%   review={\MR{2371700}},
}

\bib{C-BG1969}{article}{
   author={Choquet-Bruhat, Y.},
   author={Geroch, R.},
   title={Global aspects of the Cauchy problem in general relativity},
   journal={Comm. Math. Phys.},
   volume={14},
   date={1969},
   pages={329--335},
%   issn={0010-3616},
%   review={\MR{0250640}},
}

\bib{FewsterHunt2013}{article}{
   author={Fewster, C. J.},
   author={Hunt, D. S.},
   title={Quantization of linearized gravity in cosmological vacuum
   spacetimes},
   journal={Rev. Math. Phys.},
   volume={25},
   date={2013},
   number={2},
   pages={1330003, 44},
%   issn={0129-055X},
%   review={\MR{3040811}},
}

\bib{FisherMarsden1979}{article}{
   author={Fischer, A. E.},
   author={Marsden, J. E.},
   title={Topics in the dynamics of general relativity},
   journal={in Isolated Gravitating Systems in General Relativity, Italian Physical Society, ed. J. Ehlers},
   date={1979},
   pages={322-395},
}

\bib{F-B1952}{article}{
   author={Four\`es-Bruhat, Y.},
   title={Th\'eor\`eme d'existence pour certains syst\`emes d'\'equations aux
   d\'eriv\'ees partielles non lin\'eaires},
   language={French},
   journal={Acta Math.},
   volume={88},
   date={1952},
   pages={141--225},
%   issn={0001-5962},
%   review={\MR{0053338}},
}

\bib{Moncrief1975}{article}{
   author={Moncrief, V.},
   title={Spacetime symmetries and linearization stability of the Einstein
   equations. I},
   journal={J. Mathematical Phys.},
   volume={16},
   date={1975},
   pages={493--498},
%   issn={0022-2488},
%   review={\MR{0363398}},
}

\bib{O'Neill1983}{book}{
   author={O'Neill, B.},
   title={Semi-Riemannian geometry},
   series={Pure and Applied Mathematics},
   volume={103},
   note={With applications to relativity},
   publisher={Academic Press, San Diego},
   date={1983},
   pages={xiii+468},
%   isbn={0-12-526740-1},
%   review={\MR{719023}},
}

\bib{Treves1967}{book}{
   author={Tr\`eves, F.},
   title={Topological vector spaces, distributions and kernels},
   publisher={Academic Press, San Diego},
   date={1967},
   pages={xvi+624},
%   review={\MR{0225131}},
}

\end{biblist}
\end{bibdiv}

\end{sloppypar}
\end{document}